\tikzset{help lines/.style={step=#1cm,very thin, color=gray},
help lines/.default=.5} % draws a grid spaced #1 cm
\tikzset{thick grid/.style={step=#1cm,thick, color=gray},
thick grid/.default=1} % draws a grid spaced #1 cm
\DeclareMathAlphabet\mathbfcal{OMS}{cmsy}{b}{n}
\NewDocumentCommand{\sump}{e{_}}
 {%
  \DOTSB
  \mathop{\IfNoValueTF{#1}{\sump@{}}{\sump@{#1}}}%
  \nolimits
 }
\newcommand{\sump@}[1]{\mathpalette\sump@@{#1}}
\newcommand{\sump@@}[2]{%
  \ifx#1\displaystyle
    {\sump@display{#2}}%
  \else
    \sum@\nolimits'_{#2}%
  \fi
}
\newcommand{\sump@display}[1]{%
  \sbox\z@{$\m@th\displaystyle\sum@\nolimits'$}%
  \sbox\tw@{$\m@th\displaystyle\sum@\limits_{#1}$}%
  \sbox\@tempboxa{$\m@th\displaystyle'$}
  \mathop{\sum@\nolimits' \kern-\wd\@tempboxa}\limits_{#1}%
  \ifdim\wd\z@>\wd\tw@
    \kern\dimexpr\wd\z@-\wd\tw@\relax
  \fi
}
\numberwithin{figure}{section}
\numberwithin{table}{section}
\theoremstyle{definition}
\theoremstyle{plain}
\newcommand{\thistheoremname}{}
\newtheorem*{genericthm*}{\thistheoremname}
\newenvironment{namedthm*}[1]
  {\renewcommand{\thistheoremname}{#1}%
   \begin{genericthm*}}
  {\end{genericthm*}}
 \newtheorem{theorem}{Theorem}[section]
 \newtheorem{corollary}{Corollary}[section]
 \newtheorem{lemma}[theorem]{Lemma}
 \newtheorem{proposition}[theorem]{Proposition}
 \theoremstyle{definition}
 \newtheorem{definition}{Definition}[section]
 \theoremstyle{definition}
 \theoremstyle{remark}
 \newtheorem{remark}{Remark}[section]
\newtheorem{conjecture}[theorem]{\bf Conjecture}
 \numberwithin{equation}{section}
 \theoremstyle{remark}
 \newtheorem*{Notations*}{Notations}
 \newtheorem*{Proof*}{Proof}
\newcommand{\ben}{\begin{equation}}
\newcommand{\een}{\end{equation}}
\DeclareMathOperator{\Sl}{SL}
\DeclareMathOperator{\Gl}{GL}
\DeclareMathOperator{\Span}{Span}
\DeclareMathOperator{\Hom}{Hom}
\DeclareMathOperator{\re}{Re}
\DeclareMathOperator{\sign}{sign}
\DeclareMathOperator{\Tr}{Tr}
\setlist[enumerate]{leftmargin=*,widest=0}
\def\subsection{\@startsection{subsection}{2}%
  \z@{.5\linespacing\@plus.7\linespacing}{.3\linespacing}%
  {\normalfont\bfseries}}
\def\subsubsection{\@startsection{subsubsection}{3}%
  \z@{.5\linespacing\@plus.7\linespacing}{.3\linespacing}%
  {\normalfont\bfseries}}
\title{Twisted periods of modular forms}
\subjclass[2020]{11F11, 11F67}
\keywords{twisted periods of modular forms, linear independence, special values of $L$-functions}
\begin{document}
\author{Tianyu Ni}
\address{School of Mathematical and Statistical Sciences\\
Clemson University\\
Clemson, SC 29634-0975\\
USA}
\email{tianyuni1994math@gmail.com}
\author{Hui Xue}
\address{School of Mathematical and Statistical Sciences\\
Clemson University\\
Clemson, SC 29634-0975\\
USA}
\email{huixue@clemson.edu}
\begin{abstract}
    Let $S_k$ denote the space of cusp forms of weight $k$ and level one. For $0\leq t\leq k-2$ and primitive Dirichlet character $\chi$ mod $D$, we introduce twisted periods $r_{t,\chi}$ on $S_k$. We show that for a fixed natural number $n$, if $k$ is sufficiently large relative to $n$ and $D$, then any $n$ periods with the same twist but different indices are linearly independent. We also prove that if $k$ is sufficiently large relative to $D$ %\ques{no $n$ here?},\answ{$n=O(D)$ so $k\gg_D1$} 
    then any $n$ periods with the same index but different twists mod $D$ are linearly independent. These results are achieved by studying the trace of the products and Rankin-Cohen brackets of Eisenstein series of level $D$ with nebentypus. 
    Moreover, we give two applications of our method. First, we prove certain identities that evaluate convolution sums of twisted divisor functions. Second, we show that Maeda's conjecture implies a non-vanishing result on twisted central $L$-values of normalized Hecke eigenforms. 
\end{abstract}
\maketitle

\section{Introduction}
Let $k\geq2$ be an integer. For $N\geq1$ and a Dirichlet character $\chi$ mod $N$, let $M_k(N,\chi)$ and $S_k(N,\chi)$ be the space of modular forms and cusp forms of weight $k$, level $N$ and nebentypus $\chi$, respectively. We simply write $M_k(N)$ and $S_k(N)$ if $\chi$ is trivial, and use $M_k$ and $S_k$ to denote $M_k(1)$ and $S_k(1)$. 
For each $0\leq t\leq k-2$, the $t$-th period of $f\in S_k$ is given by
\begin{align}
    r_t(f):=\int_0^{i\infty}f(z)z^tdz=\frac{t!}{(-2\pi i)^{t+1}}L(f,t+1).
\end{align}
Here the $L$-function of $f(z)=\sum_{n\geq1}a_f(n)q^n$ is $L(f,s)=\sum_{n\geq1}a_f(n)n^{-s}$, which converges for $\re(s)$ sufficiently large and can be extended analytically to the whole complex plane. Note that each $r_t$ can be regarded an element in $S_k^{\ast}:=\Hom_{\mathbb{C}}(S_k,\mathbb{C})$. The result of Eichler and Shimura (\cite{Eichlerperiod,Manin1973,Kohnen1984}) asserts that odd periods $r_1,r_3,...,r_{k-3}$ (or even periods $r_0,r_2,...,r_{k-2}$) span the vector space $S_k^{\ast}$. However, these periods are not linearly independent. In fact, they are subject to many linear dependence relations, called the Eichler-Shiumra relations \cite{Manin1973}.
% For $t=0,1,...,k-2$, it holds that \ques{Do we really need these explicit Eichler-Shimura relations? If not, we should remove them.}
% \begin{gather*}
%     \quad r_t(f)+(-1)^{t}r_{k-2-t}(f)=0,\label{eq:ES1}\tag{ES1}
%     \\\quad(-1)^{t}r_t(f)+\sum_{\substack{0\leq m\leq t\\m\equiv0\mod2}}\binom{t}{m}r_{k-2-t+m}(f)+\sum_{\substack{0\leq m\leq k-2-t\\m\equiv t\mod2}}\binom{k-2-t}{m}r_m(f)=0,\label{eq:ES2}\tag{ES2}
%     \\\quad\sum_{\substack{1\leq m\leq t\\m\equiv1\mod2}}\binom{t}{m}r_{k-2-t+m}(f)+\sum_{\substack{0\leq m\leq k-2-t\\m\not\equiv t\mod2}}\binom{k-2-t}{m}r_{m}(f)=0.\label{eq:ES3}\tag{ES3}
%     \end{gather*}
     This leads to a natural question. Which periods are linearly independent? The first work in this direction that we are aware of is \cite{Fukuhara07}, in which Fukuhara found an explicit subset of odd periods that forms a basis for $S_k^{\ast}$. Recently,  Lei et al. \cite{oddperiods,evenperiods} have provided some evidence for the linear independence of odd periods and even periods for modular forms, respectively. Furthermore, the linear independence between an odd and an even periods has been addressed in \cite{Xueoddandevenperiods}.

In this paper, we consider the \textit{twisted periods} of modular forms, which are natural generalizations of periods. Let $\chi$ be a primitive Dirichlet character mod $D$. For $0\leq n\leq k-2$, we define the $n$-th twisted period for $f\in S_k$ (\cite[p.~978]{twistedLvaluesFukuhara}) to be 
    \begin{align}
        r_{n,\chi}(f):=\int_0^{i\infty} f_{\chi}(x) z^n dz=\frac{n!}{(-2\pi i)^{n+1}}L(f,\chi,n+1).\label{eq:periodandLvalue}
    \end{align}
Here the twist $f_{\chi}(z):=\sum_{n\geq1}\chi(n)a_f(n)q^n$ is an element in $S_{k}(D^2,\chi^2)$ (\cite[Proposition 14.19]{Iwaniecbook}).
The twisted $L$-function $L(f,\chi,s):=\sum_{n\geq1}\chi(n)a_f(n)n^{-s}$, originally defined for $\re(s)\gg0$, can be analytically continued to the whole complex plane. For a normalized Hecke eigenform $f\in S_k$, we have 
%there is a functional equation that relates $L(f,\chi,s)$ and $L(f,\overline{\chi},k-s)$ 
(\cite[Proposition 14.20]{Iwaniecbook}): 
% we have \cite[Proposition 14.20]{Iwaniecbook}
    \begin{align}
 \Lambda(f,\chi,s)=i^k w_{\chi} \Lambda(f,\overline{\chi},k-s),   \label{eq:functionaleqoftwistedL}
 \end{align}
where $\Lambda(f,\chi,s)=(2\pi)^{-s}\Gamma(s)L(f,\chi,s)$ is the completed twisted $L$-function and $ w_{\chi}:=\frac{G(\chi)^2}{D}$. 
One can view each $r_{n,\chi}$  as an element in $S_k^{\ast}$.

Since twisted periods can be regarded as elements in $S_{k}^{\ast}$, the first question about these periods is whether they span the whole space of $S_k^{\ast}$. The case of $D=4$ has been treated by Kina \cite[Corollary 5.8]{KinadoubleEisenstein2024}. The second question is whether we can find Eichler-Shimura type linear relations  among them. 
% Note that \eqref{eq:ES1} for $r_{n,\chi}$ is essentially the functional equation \eqref{eq:functionaleqoftwistedL}. 
Furthermore, we would like to know which periods are linearly independent. The first two questions are beyond the scope of this paper. But, we will propose related conjectures in the last section. In this paper, extending the ideas of \cite{oddperiods, evenperiods}, we will provide some evidence for the linear independence of twisted periods. 

We now state our results. First, we consider the linear independence of twisted periods for the same character.

\begin{theorem}\label{thm:mainthmfixDdifferentell}
    Let $n\geq1$ be an integer, $D\geq1$ an odd square-free integer, and $\chi$ be a primitive Dirichlet character mod $D$. For $K\gg_{n,D}1$, if $3\leq\ell_1<\ell_2<...<\ell_n\leq\frac{K-2}{2}$ are integers such that $\chi(-1)=(-1)^{\ell_i}$ for all $1\leq i\leq n$, then the set of twisted periods $\{r_{\ell_i-1,\chi}\}_{i=1}^n$ on $S_{K}$ is linearly independent. 
\end{theorem}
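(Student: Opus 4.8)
The plan is to show that a nontrivial linear relation $\sum_{i=1}^n c_i r_{\ell_i-1,\chi} = 0$ in $S_K^{\ast}$ forces all $c_i = 0$, by pairing this functional against a carefully chosen family of cusp forms and extracting an asymptotic contradiction as $K \to \infty$. The natural test objects, following \cite{oddperiods, evenperiods}, are the holomorphic projections of products (or Rankin–Cohen brackets) of Eisenstein series of level $D$ with nebentypus $\chi$ (and $\bar\chi$), since the formula \eqref{eq:periodandLvalue} expresses $r_{n,\chi}(f)$ in terms of the twisted $L$-value $L(f,\chi,n+1)$, and such $L$-values appear as Petersson inner products of $f$ against these Eisenstein-type kernels by Rankin–Selberg unfolding. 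Concretely, I would fix weights $a, b$ with $a + b = K$, $a$ running over an appropriate residue/parity class dictated by $\chi(-1)=(-1)^{\ell_i}$, form the level-$D$ Eisenstein series $E_a^{\chi}$ and $E_b^{\bar\chi}$ (or their Rankin–Cohen bracket $[E_a^{\chi}, E_b^{\bar\chi}]_\nu$), take the trace down to level one, and compute the Petersson pairing of the resulting cusp form in $S_K$ against the putative relation.

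The key steps, in order: (1) Recall/derive the Fourier expansion of the relevant Eisenstein series $E_a^{\chi} \in M_a(D,\chi)$ and establish that the trace to level one of the product $E_a^{\chi} \cdot E_b^{\bar\chi}$ (resp. the Rankin–Cohen bracket), after holomorphic projection, lies in $S_K$ — this is where the hypothesis that $D$ is odd and square-free, and that $\chi^2$-related obstructions vanish, gets used to keep the nebentypus trivial after twisting. (2) Express $\langle f, \Tr(E_a^{\chi} E_b^{\bar\chi})\rangle$ for a Hecke eigenbasis $f$ as (an explicit constant times) a product of two twisted $L$-values $L(f,\chi,\cdot)\,L(f,\bar\chi,\cdot)$, using Rankin–Selberg/unfolding together with the functional equation \eqref{eq:functionaleqoftwistedL}; this converts the pairing of the relation against $\Tr(E_a^\chi E_b^{\bar\chi})$ into a weighted sum $\sum_f \frac{c_{i(a)} \cdot (\text{twisted }L\text{-values})}{\langle f, f\rangle}$. (3) Equivalently and more cleanly, compute the pairing directly in terms of Fourier coefficients: $r_{t,\chi}$ pairs against a cusp form to give a finite sum involving $\chi(m) a_f(m)$ with coefficients that are polynomial in the index; choosing the Eisenstein product so that its holomorphic projection's Fourier coefficients are divisor-type sums, one gets a system whose main term, as $K \to \infty$, is governed by a Vandermonde-like matrix in the variables $\ell_i$. (4) Show this matrix is nonsingular and that the error terms are negligible for $K \gg_{n,D} 1$, yielding $c_i = 0$ for all $i$.

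The main obstacle I expect is Step (2)–(3): controlling the holomorphic projection and the trace map simultaneously, and showing that the "main term" of the resulting pairing — as a function of the free weight parameter $a$ and the indices $\ell_i$ — is a nondegenerate (generalized Vandermonde) system rather than one with hidden linear dependencies. The parity constraint $\chi(-1) = (-1)^{\ell_i}$ is exactly what guarantees the relevant Eisenstein series is nonzero and of the right parity, so the book-keeping of signs and Gauss sums $G(\chi)$ must be done carefully; and the dependence of all implied constants on $D$ (through conductors, ramified Euler factors, and the trace map's degree $[\,\SL_2(\integer) : \Gamma_0(D)\,]$) must be made uniform enough that a single threshold $K \gg_{n,D} 1$ works. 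A secondary technical point is ensuring the chosen auxiliary weights $a, b$ with $a+b = K$ can be taken in the correct congruence class with $3 \le a, b$ once $K$ is large — this is a counting argument but needs the square-free, odd hypothesis on $D$ to align parities.
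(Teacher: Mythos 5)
Your proposal follows essentially the same route as the paper: form the level-one trace of the product $G_{\ell,\chi}G_{K-\ell,\overline{\chi}}$ of nebentypus Eisenstein series, project to the cuspidal part, use Rankin--Selberg plus the functional equation to identify its Petersson pairing with $L(f,K-1)\,r_{\ell-1,\chi}(f)$, and then reduce linear independence to the non-singularity of the Fourier-coefficient matrix, whose main term (evaluated at $n=2^{j-1}$ in the paper) is a Vandermonde matrix in $\chi(2)2^{\ell_i-1}$ with errors that are $o(1)$ for $K\gg_{n,D}1$. The only cosmetic difference is that the paper needs no genuine holomorphic projection — the product of holomorphic Eisenstein series is already holomorphic, so one merely subtracts a multiple of $G_K$ to kill the constant term.
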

\begin{theorem}\label{thm:mainthmfixDdifferentell2}
    Let $n\geq1$ be an integer, $D\geq1$ an odd square-free integer, and $\chi$ be a primitive Dirichlet character mod $D$. For $K\gg_{n,D}1$, if $3\leq\ell_1<\ell_2<...<\ell_n\leq\frac{K-4}{2}$ are integers such that $\chi(-1)=(-1)^{\ell_i}$ for all $1\leq i\leq n$, then the set of twisted periods $\{r_{\ell_i,\chi}\}_{i=1}^n$ on $S_{K}$ is linearly independent. 
\end{theorem}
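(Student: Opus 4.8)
The plan is to prove Theorem~\ref{thm:mainthmfixDdifferentell2} by the same mechanism as Theorem~\ref{thm:mainthmfixDdifferentell}, the two statements being the ``same--parity'' and ``opposite--parity'' cases of linear independence among twisted periods of a fixed character, and the passage between them being a unit shift of a Rankin--Cohen index. First I would replace the functionals $r_{\ell,\chi}$ by cusp forms. For each admissible $\ell$, Rankin--Selberg unfolding yields a cusp form $R_{\ell,\chi}\in S_K$ --- realized as the cuspidal projection of the trace $\Tr^{D}_{1}$ to level one of a product (or Rankin--Cohen bracket $[E_a,E_b]_\nu$, with $a+b+2\nu=K$) of Eisenstein series of level $D$ with nebentypus $\chi_1,\chi_2$ satisfying $\chi_1\chi_2=\chi_0$ (so that the trace to level one is not identically zero), the parities of $(a,b,\nu,\chi_1,\chi_2)$ being those forced by this theorem --- such that, for every normalized Hecke eigenform $f\in S_K$,
\begin{equation}
\langle f, R_{\ell,\chi}\rangle = C(K,D,\ell,\chi)\cdot L^{\flat}(f)\cdot L(f,\chi,\ell+1).
\end{equation}
Here $L(f,\chi,\ell+1)$ is the value governing $r_{\ell,\chi}(f)$ through \eqref{eq:periodandLvalue} (possibly reached via the functional equation \eqref{eq:functionaleqoftwistedL}); the index $\ell+1$ ranges over integers up to $\tfrac{K-2}{2}$ of one fixed parity, and it is precisely the incrementing of the Rankin--Cohen index relative to Theorem~\ref{thm:mainthmfixDdifferentell} that flips this parity and turns the bound $\tfrac{K-2}{2}$ for $\ell$ into $\tfrac{K-4}{2}$. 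The constant $C(K,D,\ell,\chi)\neq 0$ is explicit (products of $\Gamma$-values and binomial coefficients, nonzero in the relevant range), while $L^{\flat}(f)$ is some fixed $L$-value of $f$ that does \emph{not} depend on $\ell$.

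The key use of the last clause is that the $\ell$-independent factor $L^{\flat}(f)$ cancels out of any linear relation. Indeed, suppose $\sum_{i=1}^{n}c_i r_{\ell_i,\chi}=0$ on $S_K$ and let $\{f_j\}$ be a basis of $S_K$ by Hecke eigenforms. Setting $\widetilde C(\ell_i):=C(K,D,\ell_i,\chi)$ times the elementary constant relating $L(f,\chi,\ell_i+1)$ to $r_{\ell_i,\chi}(f)$, the displayed identity gives, for each $j$,
\begin{equation}
\sum_{i=1}^{n}\frac{c_i}{\widetilde C(\ell_i)}\,\langle f_j, R_{\ell_i,\chi}\rangle \;=\; L^{\flat}(f_j)\sum_{i=1}^{n}c_i\, r_{\ell_i,\chi}(f_j) \;=\; 0,
\end{equation}
so, by nondegeneracy of the Petersson product, $\sum_i (c_i/\widetilde C(\ell_i))\,R_{\ell_i,\chi}=0$ in $S_K$. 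Hence it suffices to prove that $\{R_{\ell_i,\chi}\}_{i=1}^{n}$ is linearly independent in $S_K$, and for that, in turn, to exhibit Fourier indices $m_1<\cdots<m_n$ with $\det\bigl(a_{R_{\ell_i,\chi}}(m_j)\bigr)_{1\le i,j\le n}\neq 0$; in particular we need $\dim S_K\ge n$, which is already part of $K\gg_{n,D}1$.

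Finally I would compute $a_{R_{\ell,\chi}}(m)$ --- a finite sum of products of (generalized) Bernoulli numbers, twisted divisor sums $\sigma_{s,\chi}(\cdot)$, and binomial coefficients, assembled from the Fourier expansions of the Eisenstein factors, the Rankin--Cohen bracket formula, the action of $\Tr^{D}_{1}$ on $q$-expansions, and the subtraction of the level-one Eisenstein series $E_K$ --- and, for each fixed small $m$, isolate the term that dominates as $K\to\infty$. I expect this dominant term to factor as a monomial in $m$ times an $\ell$-dependent binomial/Bernoulli coefficient times a quantity independent of $(\ell,m)$, so that, taking $m_j=j$, the matrix $\bigl(a_{R_{\ell_i,\chi}}(m_j)\bigr)$ is to leading order a generalized Vandermonde matrix in $\ell_1<\cdots<\ell_n$ and hence nonsingular; a uniform estimate on the remaining terms then keeps the full determinant nonzero once $K\gg_{n,D}1$. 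The main difficulty will be this last step: controlling the sum over cosets of $\Gamma_0(D)$ in $\mathrm{SL}_2(\mathbb{Z})$ implicit in the trace, together with the Rankin--Cohen sums, sharply enough to isolate a single dominant term uniformly for $m\le n$ and all $\ell_i\le\tfrac{K-4}{2}$, and to pin down indices $m_j$ for which the leading matrix is manifestly nonsingular. The hypotheses that $D$ is odd and square-free and that $\chi(-1)=(-1)^{\ell_i}$ enter in ensuring that Eisenstein series of the required weights and nebentypus exist and that the parity bookkeeping and the local factors at primes $p\mid D$ come out right.
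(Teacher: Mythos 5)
Your outline is essentially the paper's argument: the kernel is $\mathcal{G}_{K,\ell,\chi}=\Tr^D_1([G_{\ell,\chi},G_{K-2-\ell,\overline{\chi}}]_1)$ as in \eqref{eq:G-def} (nebentypus $\chi\cdot\overline{\chi}$ trivial, so the trace to level one makes sense), the inner product formula is Proposition \ref{prop:Rankin-selbergperiod2} with $L^{\flat}(f)=L(f,K-2)$ and with $L(f,\overline{\chi},K-\ell-1)$ converted to $r_{\ell,\chi}(f)$ via the functional equation, the cancellation of the $\ell$-independent factor is exactly Proposition \ref{prop:linearindeperiodtomodularform2}, and the endgame is a perturbed-Vandermonde determinant of Fourier coefficients. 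Two small remarks: no cuspidal projection is needed here, since the first Rankin--Cohen bracket already has vanishing constant term (this is why $\mathcal{G}_{K,\ell,\chi}$, unlike $\mathcal{F}_{K,\ell,\chi}$, has no Eisenstein correction); and the dominant term of the $m$-th coefficient is $m\,\sigma_{\ell-1,\chi}(m)$, as in \eqref{eq:fouriercoefficientsb}.

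The one step that would fail as written is your choice $m_j=j$. The leading matrix is then $\bigl(m_j\,\sigma_{\ell_i-1,\chi}(m_j)\bigr)_{i,j}$, and factoring $\sigma_{s,\chi}(j)=\sum_{d\le n}\chi(d)\,[d\mid j]\,d^{s}$ exhibits it as $V\cdot D$ with $V_{i,d}=\chi(d)d^{\ell_i-1}$ and $D$ unitriangular; its determinant carries the factor $\prod_{d\le n}\chi(d)$, which vanishes as soon as some $d\le n$ shares a prime with $D$. Concretely, for $D=3$ and $n=3$ the columns $m=1$ and $m=3$ of the leading matrix are proportional (since $\sigma_{s,\chi}(3)=1=\sigma_{s,\chi}(1)$), so it is singular and the ``leading term nonsingular plus small error'' scheme collapses. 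The paper's fix is to take $m_j=2^{j-1}$: then $\sigma_{\ell-1,\chi}(2^{j-1})$ is a geometric sum in $\chi(2)2^{\ell-1}$, column reduction turns the leading matrix $\mathbf{N}_{K,\ell_1,\dots,\ell_n,\chi}$ of \eqref{eq:limitmatrixtype2} into an honest Vandermonde matrix in the distinct nodes $\chi(2)2^{\ell_i-1}$ with an explicit lower bound on the determinant (Lemma \ref{lem:lowerboundoflimitmatrix2}), and the asymptotics $\mathfrak{b}_{K,\ell,\chi}(2^j)=2^{j}\sigma_{\ell-1,\chi}(2^{j})(1+o(1))$ of Proposition \ref{prop:asymptoticB} supply the uniform error control. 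This is precisely where the hypothesis that $D$ is odd is used (so that $\chi(2)\ne0$), not merely in ``local factors at $p\mid D$'' bookkeeping; with that substitution your argument goes through.
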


We then consider the linear independence of twisted periods with the same index but for different characters mod $D$. 
\begin{theorem}\label{thm:twistbydifferentD}
    Let $D\geq1$ be an odd square-free integer. For $K\gg_D1$, if $3\leq\ell\leq\frac{K-2}{2}$ is an integer, $\chi_1,...,\chi_n$ are primitive Dirichlet characters mod $D$ such that $\chi_i(-1)=(-1)^{\ell}$ and $\chi_i(2)$ are distinct for all $1\leq i\leq n$, then the set of twisted periods $\{r_{\ell-1,\chi_i}\}_{i=1}^n$ on $S_K$ is linearly independent.
\end{theorem}
\begin{theorem}\label{thm:twistbydifferentD2}
     Let $D\geq1$ be an odd square-free integer. For $K\gg_D1$, if $3\leq\ell\leq\frac{K-4}{2}$ is an integer and $\chi_1,...,\chi_n$ are primitive Dirichlet characters mod $D$ such that $\chi_i(-1)=(-1)^{\ell}$ and $\chi_2(2)$ are distinct for all $1\leq i\leq n$, then the set of twisted periods $\{r_{\ell,\chi_i}\}_{i=1}^n$ on $S_K$ is linearly independent.
\end{theorem}
\begin{remark}
    The assumption that $\chi_i$'s take distinct values at $2$ in Theorems \ref{thm:twistbydifferentD} and \ref{thm:twistbydifferentD2} can be replaced by $\chi_i$'s taking distinct values at an arbitrary fixed prime $p$, in which case our results hold for $K\gg_{p,D}1$.
\end{remark}
\begin{remark}We want to point out that when $D=1$, the twisted periods involved in Theorems \ref{thm:mainthmfixDdifferentell} and \ref{thm:twistbydifferentD} are the odd periods considered in \cite{oddperiods}, and the twisted periods treated in Theorems \ref{thm:mainthmfixDdifferentell2} and \ref{thm:twistbydifferentD2} are exactly the even periods studied in \cite{evenperiods}.\end{remark}
Throughout the paper we assume that $D\geq1$ is a positive odd square-free integer in order to avoid technical complications.

We sketch the rough idea of the proof of Theorems \ref{thm:mainthmfixDdifferentell} and \ref{thm:twistbydifferentD}. The key is to  study the linear independence of the kernel functions  that give rise to the twisted periods. For $f,g\in S_K$, let $\langle f,g\rangle$ denote the Petersson inner product. Then there is a cusp form $R_{K,n,\chi}$ such that $r_{n,\chi}(f)=\langle R_{K,n,\chi},f\rangle$ for all $f\in S_K$. In general, $R_{K,n,\chi}$ is expected to have transcendental Fourier coefficient, which is not convenient to study. Instead of considering $R_{K,n,\chi}$, we study the modular form $\Tr_1^D(G_{\ell,\chi}(z)G_{K-\ell,\overline{\chi}}(z))\in M_K$. Here the Eisenstein series $G_{k,\chi}$ of weight $k$ (see Section \ref{sect:rankin-selberg} for details) is given by 
\begin{align}
    G_{k,\chi}(z):&=\sum_{n=0}^{\infty}\sigma_{k-1,\chi}(n)q^n\in M_{k}\left(D,\chi\right) \label{eq:Fourier-expan-G}\\
    \sigma_{k-1,\chi}(n):&=\begin{cases}
        \frac{(k-1)!D^k}{(-2\pi i)^kG(\overline{\chi})}L(k,\overline{\chi}) &n=0,\\
         \sum\limits_{d\mid n}\chi(d)d^{k-1}& n\geq1.
    \end{cases}\quad\label{eq:twistsumdivisor}
\end{align}
Additionally, for $N\mid M$, $\Tr_N^M$  is the trace map 
%(see e.g. \cite[p.271]{GZ-86}) 
\begin{align} \label{eq:tracemap}
    \Tr_N^M: M_{m}(M)\rightarrow M_m(N),\quad g\mapsto \sum_{\gamma\in\Gamma_0(M)\backslash\Gamma_0(N)}g|_m\gamma,
\end{align}
where for any real number $m$ and $\gamma=\begin{bsmallmatrix}
    a&b\\c&d
\end{bsmallmatrix}\in \Gl_2^+(\mathbb{R})$  the slash operator \cite[Theorem 7.1]{Cohen'smodularformC_k} is
\begin{align} \label{eq:slashoperator}
g(z)|_m\gamma=\det(\gamma)^{m/2} (cz+d)^{-m} g\left(\frac{az+b}{cz+d}\right).
\end{align}
We will show that the cuspidal projection of $\Tr_1^D(G_{\ell,\chi}(z)G_{K-\ell,\overline{\chi}}(z))$ is \begin{align}
    \mathcal{F}_{K,\ell,\chi}(z):=\Tr_1^D(G_{\ell,\chi}(z)G_{K-\ell,\overline{\chi}}(z))-\frac{2\sigma_{\ell-1,\chi}(0)\sigma_{k-1,\overline{\chi}}(0)}{\zeta(1-K)}G_K(z),
\label{eq:F-def}\end{align}
where  $G_K(z)=\frac{\zeta(1-K)}{2}+\sum_{n\geq1}\sigma_{K-1}(n)q^n$, $\zeta(1-K)=-\frac{B_K}{K}$, and $B_K$ is the $K$-th Bernoulli number, see Section \ref{sect:Fourier}. Then we can relate the twisted period $r_{\ell-1,\chi}$ and the cusp form $\mathcal{F}_{K,\ell,\chi}$ through the Rankin-Selberg method (Proposition \ref{prop:Rankin-selbergperiod}), which in turn allows us to show the equivalence between the linear independence of $\{r_{\ell_i-1,\chi}\}_{i=1}^n$ and linear independence of $\{\mathcal{F}_{K,\ell_i,\chi}\}_{i=1}^n$ (Proposition \ref{prop:linearindeperiodtomodularform}). Applying the same methodology in \cite{oddperiods,evenperiods}, we study the linear independence of $\{\mathcal{F}_{K,\ell_i,\chi}\}_{i=1}^n$ by investigating the matrix $M_{K,\ell_1,...,\ell_n,\chi}$ \eqref{eq:matrix1} formed by the Fourier coefficients of $\{\mathcal{F}_{K,\ell_i,\chi}\}_{i=1}^n$. Similarly, we can show that $\{r_{\ell-1,\chi_i}\}_{i=1}^n$ is linearly independent if and only if $\{\mathcal{F}_{K,\ell,\chi_i}\}_{i=1}^n$  is linearly independent. Theorem \ref{thm:twistbydifferentD} can be proved by showing that the matrix  $P_{K,\ell, \chi_1,...,\ell_n}$ \eqref{eq:matrix3} formed by the Fourier coefficients of $\{\mathcal{F}_{K,\ell,\chi_i}\}_{i=1}^n$ is non-singular.

Theorems \ref{thm:mainthmfixDdifferentell2} and \ref{thm:twistbydifferentD2} can be proved in a similar fashion. But we need to consider the Rankin-Cohen bracket (see Section \ref{sect:rankin-selberg}) of Eisenstein series. Define 
\begin{align}\mathcal{G}_{K,\ell,\chi}(z)=\Tr^D_1([G_{\ell,\chi}(z),G_{K-2-\ell,\overline{\chi}}(z)]_1).\label{eq:G-def}
\end{align}
By the Rankin-Selberg convolution (Proposition \ref{prop:Rankin-selbergperiod2}), we can show the equivalence between the linear independence of $\{r_{\ell_i,\chi}\}_{i=1}^n$ ($\{r_{\ell,\chi_i}\}_{i=1}^n$) and the linear independence of $\{\mathcal{G}_{K,\ell_i,\chi}\}_{i=1}^n$ ($\{\mathcal{G}_{K,\ell,\chi_i}\}_{i=1}^n$). Then Theorems \ref{thm:mainthmfixDdifferentell2} and \ref{thm:twistbydifferentD2} can be proved by showing the non-singularity of  $N_{K,\ell_1,...,\ell_n,\chi}$ \eqref{eq:matrix2} and $Q_{K,\ell, \chi_1,...,\ell_n}$ \eqref{eq:matrix4}, respectively.

The paper is organized as follows. In Section \ref{sect:rankin-selberg}, we prove the equivalence between the linear independence of the twisted periods and the linear independence of the corresponding cusp forms using Rankin-Selberg convolutions. In Section \ref{sect:Fourier}, we first compute the Fourier coefficients of $\mathcal{F}_{K,\ell,\chi}$ and $\mathcal{G}_{K,\ell,\chi}$. Then we give the asymptotics of the Fourier coefficients of $\mathcal{F}_{K,\ell,\chi}$ and $\mathcal{G}_{K,\ell,\chi}$. In Section \ref{sect:coefmatrix}, we define the matrices $M_{K,\ell_1,...,\ell_n,\chi}, N_{K,\ell_1,...,\ell_n,\chi}, P_{K,\ell,\chi_1,...,\chi_n}$ and $Q_{K,\ell,\chi_1,...,\chi_n}$ formed by the Fourier coefficients of $\{\mathcal{F}_{K,\ell_i,\chi}\}_{i=1}^n, \{\mathcal{G}_{K,\ell_i,\chi}\}_{i=1}^n, \{\mathcal{F}_{K,\ell,\chi_i}\}_{i=1}^n$ and $\{\mathcal{G}_{K,\ell,\chi_i}\}_{i=1}^n$, respectively. We then give criteria for the non-singularity of these matrices. Finally, we prove Theorems \ref{thm:mainthmfixDdifferentell}, \ref{thm:mainthmfixDdifferentell2}, \ref{thm:twistbydifferentD} and \ref{thm:twistbydifferentD2} by showing the corresponding matrices are non-singular. In Section \ref{sect:applications}, we give two applications of our method. We prove some identities that evaluate convolution sums of twisted divisor functions. We also show that Maeda's conjecture implies a non-vanishing result on twisted central $L$-values of normalized Hecke eigenforms. In the last section, we propose several conjectures on spanning $S_K$ by twisted periods.

\section{Rankin-Selberg convolutions}\label{sect:rankin-selberg}
In this section, we prove the equivalence between the linear independence of twisted periods and the linear independence of the corresponding kernel cusp forms using Rankin-Selberg convolutions.  

First, we recall  some basic facts of Eisenstein series as developed in Miyake's book \cite[\S 7]{MiyakeMFbook}. Let $\chi$ and $\psi$ be Dirichlet characters mod $L$ and mod $M$, respectively. For any positive integer $k\geq3$, we put (\cite[\S7]{MiyakeMFbook})
\begin{align}
    E_{k}(z;\chi,\psi)=\sideset{}{'}\sum\limits_{m,n\in\mathbb{Z}}\chi(m)\psi(n)(mz+n)^{-k}.\nonumber
\end{align}
Here,  $\sideset{}{'}\sum$ is the summation over all pairs of integers $(m,n)$ except $(0,0)$. 
\label{lem:EisensteinFourierexpansion}
    \begin{lemma}[{\cite[Theorem 7.1.3]{MiyakeMFbook}}] 
Assume $k\geq3$. Let $\chi$ and $\psi$ be Dirichlet characters mod $L$ and mod $M$, respectively, satisfying $\chi(-1)\psi(-1)=(-1)^k$. Let $m_{\psi}$ be the conductor of $\psi$, and $\psi^0$ be the primitive character associated with $\psi$. Then 
    \begin{align}
        E_{k}(z;\chi,\psi)=C+A\sum_{n=1}^{\infty}a(n)e^{2\pi inz/M},\nonumber
    \end{align}
    where 
    \begin{align}
        A&=2(-2\pi i)^kG(\psi^0)/M^{k}(k-1)!,\nonumber\\
        C&=\begin{cases}2L_M(k,\psi) &\chi:{\rm the~principal~character},\nonumber\\
            0 &{\rm otherwise},
        \end{cases}\nonumber\\
        a(n)&=\sum_{0<c\mid n}\chi(n/c)c^{k-1}\sum_{0<d\mid(l,c)}d\mu(l/d)\psi^0(l/d)\overline{\psi^0}(c/d).\nonumber
    \end{align}
    Here $l=M/m_{\psi}$, $\mu$ is the M\"obius function, $L_M(k,\psi)=\sum_{n=1}^{\infty}\psi(n)n^{-k}$ is the Dirichlet series, and $G(\psi^0)$ is the Gauss sum of $\psi^0$.\end{lemma}
We need the following two types of Eisenstein series in this section. Let $\chi$ be a primitive Dirichlet character mod $D$ and $\mathbbm{1}$ be the principal character. 
%Then 
%\begin{align}
%        E_{k}(z;\mathbbm{1},\chi)&=2L(k,\chi)+\frac{2(-2\pi i)^kG(\chi)}{(k-1)!|D|^k}\sum_{n=1}^{\infty}\left(\sum_{d\mid n}\chi(d)d^{k-1}\right)q^{2\pi i n z/|D|}.
%    \end{align}
Define
\begin{align}
    G_{k,\chi}(z):&=\frac{(k-1)!D^k}{2(-2\pi i)^kG(\overline{\chi})}E_k(Dz,\mathbbm{1},\overline{\chi})\nonumber\\&=\frac{(k-1)!D^k}{2(-2\pi i)^kG(\overline{\chi})}\sideset{}{'}\sum_{\substack{c,d\in\mathbb{Z}\\D\mid c}}\frac{\overline{\chi}(d)}{(cz+d)^k}.\label{eq:GkDforcomputingFourier}
\end{align}
From Lemma \ref{lem:EisensteinFourierexpansion} we know that
\begin{align*}
    G_{k,\chi}(z)&=\sum_{n=0}^{\infty}\sigma_{k-1,\chi}(n)q^n,
    \nonumber\\
    \sigma_{k-1,\chi}(n):&=\begin{cases}
        \frac{(k-1)!D^k}{(-2\pi i)^kG(\overline{\chi})}L(k,\overline{\chi}) &n=0, \nonumber\\
         \sum\limits_{d\mid n}\chi(d)d^{k-1}& n\geq1.
    \end{cases}
    %\label{eq:coefEisenstein}
    \end{align*}
The Eisenstein series for \textit{the cusp at infinity} \cite[p.~272]{MiyakeMFbook} is defined as 
\begin{align*}
    E_{k,D}^{\ast}(z;\chi)=\sum_{\begin{bsmallmatrix}a&b\\c&d\end{bsmallmatrix}\in\Gamma_{\infty}\backslash\Gamma_0(D)}\frac{\chi(d)}{(cz+d)^k},
\end{align*}
where $\Gamma_{\infty}:=\{\pm\begin{bsmallmatrix}
    1 & n \\ 0 &1
\end{bsmallmatrix}:n\in\mathbb{Z}\}$. Note that \cite[(7.1.30)]{MiyakeMFbook}:
\begin{align}
    2L_{D}(k,\chi)E_{k,D}^{\ast}(z;\chi)=E_{k}(Dz;\mathbbm{1},\chi).\label{eq:midEisen}
\end{align}
%\begin{proof}
%    Recall the following bijection \cite[Lemma 7.1.6]{MiyakeMFbook},
%    \begin{equation*}
%        \Gamma_\infty \backslash \Gamma_0(D) \tilde{\rightarrow} \{(c,d)\in \mathbb{Z}^2:c\equiv0~{\rm mod~}D,(c,d)=1,d>0\}:\Gamma_{\infty}\begin{bsmallmatrix}
%            a &b\\c&d
%        \end{bsmallmatrix}\mapsto (c,d),
%    \end{equation*}
%    which allows us to rewrite $E^*_{k,D}(z;\chi)$ as
%    \begin{equation*}
%        E^*_{k,D}(z;\chi)=\sum_{\substack{(c,d)=1\\d>0,D\mid c}} \frac{\chi(d)}{(cz+d)^k}=\frac{1}{2}\sum_{\substack{(c,d)=1\\ D\mid c}} \frac{\chi(d)}{(cz+d)^k}.
%    \end{equation*}
%    It follows that
%    \begin{align}
%        2L_D(k,\chi)E^*_{k,D}(z;\chi)&=2\sum_{n=1}^\infty \frac{\chi(n)}{n^k}\left(\frac{1}{2}\sum_{\substack{(c,d)=1\\ D\mid c}} \frac{\chi(d)}{(cz+d)^k}\right)\\
%        &=\sum_{n=1}^\infty \sum_{\substack{(c,d)=1\\ D\mid c}} \frac{\chi(nd)}{(ncz+nd)^k} \\
%        &=\sideset{}{'}\sum_{\substack{c,d\in\mathbb{Z}\\D\mid c}}\frac{\chi(d)}{(cz+d)^{k}} 
%        \\&=\sideset{}{'}\sum_{c,d\in \mathbb{Z}} \frac{\chi(d)}{(Dcz+d)^k}
%        \\&=E_k(Dz;\mathbbm{1},\chi),
%    \end{align}
%    as desired. 
%\end{proof}
The relation between $G_{k,\chi}(z)$ and $E_{k,D}^{\ast}(z;\overline{\chi})$ below is needed in the proof of Proposition \ref{prop:rankinselbergconvolution}.
\begin{lemma}
    Let $\chi$  be a primitive Dirichlet character mod $D$. Then 
    \begin{align}
    G_{k,\chi}(z)=\frac{(k-1)!D^kL(k,\overline{\chi})}{(-2\pi i)^kG(\overline{\chi})}E^{\ast}_{k,D}(z;\overline{\chi}).\label{eq:EisensteinforR-S}
\end{align}
\end{lemma}
\begin{proof}
    It is immediate from  \eqref{eq:GkDforcomputingFourier} and \eqref{eq:midEisen}.
\end{proof}
%\subsection{Rankin-Selberg convolutions}
We then review the classical Rankin-Selberg method. For two elements $f$ and $g$ of $M_k(N)$ such that $fg$ is a cuspform, the Petersson inner product is given by 
\begin{align}
    \langle f,g\rangle_{N}=\int_{\Gamma_{0}(N)\backslash\mathbb{H}}f(z)\overline{g(z)}y^kd\mu,\label{eq:defofPeterssoninnerproduct}
\end{align}
Here $z=x+iy$ and $d\mu=dxdy/y^2$ is the $\Sl_2(\mathbb{R})$-invariant measure on the upper half plane $\mathbb{H}$. We use $\langle\cdot,\cdot\rangle$ to denote $\langle\cdot,\cdot\rangle_{N}$ if the level is clear from the context. 
%Let us first recall the definition of Rankin-Cohen brackets for  modular forms.
\begin{definition}\label{def:rankincohen}
Let $f(z)\in M_{a}(\Gamma)$ and $g(z)\in M_b(\Gamma)$ be modular forms for some congruence subgroup $\Gamma$ of weights $a$ and $b$, respectively. For a nonnegative integer $e$, we define the $e$-th Rankin-Cohen bracket as 
\begin{align}
    [f(z),g(z)]_e := \sum_{r=0}^e (-1)^r\binom{e+a-1}{e-r}\binom{e+b-1}{r}f(z)^{(r)}g(z)^{(e-r)},\label{eq:defofrankin-cohenbracket}
\end{align}
where $f(z)^{(r)}$ is the $r$-th normalized derivative $f(z)^{(r)}:=\frac{1}{(2\pi i)^r}\frac{d^r f(z)}{dz^r}$ of $f$. Here $a,b$ can be in $\frac{1}{2}\mathbb{Z}$ and the binomial coefficients are defined through gamma functions. Moreover, $[f,g]_e\in M_{a+b+2e}(\Gamma)$ and $[f,g]_e\in S_{a+b+2e}(\Gamma)$ for $e>1$; see \cite[Theorem 7.1]{Cohen'smodularformC_k}. We remark that the Rankin-Cohen bracket defined in Zagier \cite[(73)]{Zagier1976} is related to \eqref{eq:defofrankin-cohenbracket} through $F_{e}^{(a,b)}(f(z),g(z))= (-2\pi i)^e e![f(z),g(z)]_e$; see  \cite[(1.1)]{evenperiods}.
\end{definition}

Recall the following classical result on Rankin-Selberg convolutions, which was reformulated and generalized in Zagier \cite{Zagier1976}, keeping in mind the difference between our definition of the Rankin-Cohen bracket and the one used therein.
\begin{lemma}[{\cite[Propsition 6]{Zagier1976}\label{lem:RankinSelbergZagier}}]
Let $k_1$ and $k_2$ be real numbers with $k_2\geq k_1+2>2$. Let $f(z)=\sum_{n=1}^{\infty}a(n)q^n$ and $g(z)=\sum_{n=0}^{\infty}b(n)q^n$ be modular forms in $S_k(N,\chi)$ and $M_{k_1}(N,\chi_1)$, where $k=k_1+k_2+2e, e\geq0$ and $\chi=\chi_1\chi_2$. 
Then 
%\ques{What is the level of this inner product?}
    \begin{align}
        \langle f,[g,E^*_{k_2,N}(\cdot;\chi_2)]_e\rangle_{N}=\frac{(-1)^e}{e!}\frac{\Gamma(k-1)\Gamma(k_2+e)}{(4\pi)^{k-1}\Gamma(k_2)}\sum_{n=1}^{\infty}\frac{a(n)\overline{b(n)}}{n^{k_1+k_2+e-1}}.\label{eq:RankinSelberg}
    \end{align}
%   Here $E_{k,N}^{\ast}(z;\chi)$ is the Eisenstein series for the cusp at infinity of level $N$ \cite[p.~272]{MiyakeMFbook}:
%\begin{align}
%    E_{k,N}^{\ast}(z;\chi)=\sum_{\begin{bsmallmatrix}a&b\\c&d\end{bsmallmatrix}\in\Gamma_{\infty}\backslash\Gamma_0(N)}\frac{\chi(d)}{(cz+d)^k}.
%\end{align}
\end{lemma}

\begin{proposition}\label{prop:rankinselbergconvolution}
     Let $3\leq\ell\leq \frac{K-2}{2}$ be an integer, 
     $\chi$ a primitive Dirichlet character mod $D$ such that $\chi(-1)=(-1)^{\ell}$, and $f\in S_K$ be a normalized Hecke eigenform. Then 
     \begin{align}
         \langle f,G_{\ell,\chi}G_{K-\ell,\overline{\chi}}\rangle_{D}=\frac{\Gamma(K-1)(K-\ell-1)!D^{K-\ell}}{(4\pi)^{K-1}(2\pi i)^{K-\ell}\overline{G(\chi)}}L(f,K-1)L(f,\overline{\chi},K-\ell). \nonumber
     \end{align}
\end{proposition}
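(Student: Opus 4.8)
The plan is to unfold the Petersson product via the Rankin--Selberg formula of Lemma~\ref{lem:RankinSelbergZagier} with $e=0$. The first task is to express $G_{K-\ell,\overline{\chi}}$ in terms of the starred Eisenstein series occurring there. Comparing $q$-expansions (the nonconstant Fourier coefficients of $E^{\ast}_{k,D}(\,\cdot\,;\psi)$ being a fixed multiple of $\sigma_{k-1,\overline{\psi}}(n)$, as follows from a Lipschitz summation and a Gauss sum identity, cf.\ Section~\ref{sect:Fourier}), one obtains
\[
G_{K-\ell,\overline{\chi}}(z)=\sigma_{K-\ell-1,\overline{\chi}}(0)\,E^{\ast}_{K-\ell,D}(z;\chi),
\]
where the character inside $E^{\ast}$ is $\chi$ (not $\overline{\chi}$), and the right-hand side is nonzero since $\chi(-1)=(-1)^{\ell}=(-1)^{K-\ell}$ (using that $K$ is even, as $S_K\ne 0$). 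Hence $G_{\ell,\chi}G_{K-\ell,\overline{\chi}}=\sigma_{K-\ell-1,\overline{\chi}}(0)\,[G_{\ell,\chi},E^{\ast}_{K-\ell,D}(\,\cdot\,;\chi)]_0$, and since $\langle\cdot,\cdot\rangle_{\Gamma_0(D)}$ is conjugate-linear in its second argument,
\[
\langle f,G_{\ell,\chi}G_{K-\ell,\overline{\chi}}\rangle_{\Gamma_0(D)}=\overline{\sigma_{K-\ell-1,\overline{\chi}}(0)}\,\bigl\langle f,[G_{\ell,\chi},E^{\ast}_{K-\ell,D}(\,\cdot\,;\chi)]_0\bigr\rangle_{\Gamma_0(D)}.
\]

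I would then apply Lemma~\ref{lem:RankinSelbergZagier} with $N=D$, $k_1=\ell$, $k_2=K-\ell$, $e=0$, $g=G_{\ell,\chi}$, $\chi_2=\chi$: the hypothesis $\ell\le\frac{K-2}{2}$ is exactly $k_2\ge k_1+2>2$, while $\ell\ge 3$ makes $G_{\ell,\chi}$ a holomorphic Eisenstein series. Since the $n$-th coefficient of $G_{\ell,\chi}$ is $\sigma_{\ell-1,\chi}(n)$ and $\overline{\sigma_{\ell-1,\chi}(n)}=\sigma_{\ell-1,\overline{\chi}}(n)$, the lemma gives
\[
\bigl\langle f,[G_{\ell,\chi},E^{\ast}_{K-\ell,D}(\,\cdot\,;\chi)]_0\bigr\rangle_{\Gamma_0(D)}=\frac{\Gamma(K-1)}{(4\pi)^{K-1}}\sum_{n\ge 1}\frac{a_f(n)\,\sigma_{\ell-1,\overline{\chi}}(n)}{n^{K-1}}.
\]

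The remaining input is the Rankin-type Euler product. Since $f$ is a normalized Hecke eigenform, writing $\alpha_p+\beta_p=a_f(p)$, $\alpha_p\beta_p=p^{K-1}$, the Euler factor at $p$ of $\sum_n a_f(n)\sigma_{\ell-1,\overline{\chi}}(n)n^{-s}$ evaluated at $s=K-1$ telescopes to
\[
\frac{1-\overline{\chi}(p)p^{\ell-K}}{(1-\alpha_p p^{1-K})(1-\beta_p p^{1-K})(1-\overline{\chi}(p)\alpha_p p^{\ell-K})(1-\overline{\chi}(p)\beta_p p^{\ell-K})},
\]
whence $\displaystyle\sum_{n\ge 1}\frac{a_f(n)\,\sigma_{\ell-1,\overline{\chi}}(n)}{n^{K-1}}=\frac{L(f,K-1)\,L(f,\overline{\chi},K-\ell)}{L(K-\ell,\overline{\chi})}$. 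Finally, by \eqref{eq:twistsumdivisor} one has $\sigma_{K-\ell-1,\overline{\chi}}(0)=\frac{(K-\ell-1)!\,D^{K-\ell}}{(-2\pi i)^{K-\ell}G(\chi)}L(K-\ell,\chi)$, so (using $\overline{-2\pi i}=2\pi i$ and $\overline{L(K-\ell,\chi)}=L(K-\ell,\overline{\chi})$)
\[
\overline{\sigma_{K-\ell-1,\overline{\chi}}(0)}=\frac{(K-\ell-1)!\,D^{K-\ell}}{(2\pi i)^{K-\ell}\,\overline{G(\chi)}}\,L(K-\ell,\overline{\chi}).
\]
Substituting the displays into one another, the factor $L(K-\ell,\overline{\chi})$ cancels and the asserted identity results.

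The step I expect to be most delicate is the bookkeeping in the first paragraph: establishing that it is $\chi$, not $\overline{\chi}$, that enters $E^{\ast}_{K-\ell,D}$, and correctly carrying the complex conjugation forced by the conjugate-linearity of the Petersson product. Each of these choices twists the final constant by a factor of modulus one, and it is precisely the surviving $L(K-\ell,\overline{\chi})$ in $\overline{\sigma_{K-\ell-1,\overline{\chi}}(0)}$ that is needed to cancel the Dirichlet $L$-value produced by the Euler product, so an error there would leave a spurious ratio of $L$-values. The Euler-product evaluation itself is routine, and the hypotheses $\chi(-1)=(-1)^{\ell}$ together with $3\le\ell\le\frac{K-2}{2}$ (and $K$ even) are exactly what makes every step above legitimate.
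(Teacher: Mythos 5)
Your proposal is correct and follows essentially the same route as the paper: rewrite $G_{K-\ell,\overline{\chi}}$ as $\sigma_{K-\ell-1,\overline{\chi}}(0)E^{\ast}_{K-\ell,D}(\cdot;\chi)$ (the paper's \eqref{eq:EisensteinforR-S}), apply Lemma \ref{lem:RankinSelbergZagier} with $e=0$ after conjugating the constant, and evaluate $\sum_n a_f(n)\sigma_{\ell-1,\overline{\chi}}(n)n^{1-K}$ as $L(f,K-1)L(f,\overline{\chi},K-\ell)/L(K-\ell,\overline{\chi})$ so that the Dirichlet $L$-value cancels. The only cosmetic difference is that you derive the last identity from the local Euler factors, whereas the paper's Lemma \ref{lem:easycomp} uses the equivalent Hecke relation $a_f(m)a_f(n)=\sum_{d\mid(m,n)}a_f(mn/d^2)d^{K-1}$ with a change of variables.
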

\begin{proof}
Let $k=K-\ell$. Note that \eqref{eq:EisensteinforR-S} implies that
\begin{align}
    G_{k,\overline{\chi}}(z)=\frac{(k-1)!D^kL(k,\chi)}{(-2\pi i)^{k}G(\chi)}E_{k,D}^{\ast}(z;\chi).
\end{align}
As $k\ge \ell+2$, using Lemma \ref{lem:RankinSelbergZagier} for $e=0$, %\eqref{eq:EisensteinforR-S}, 
\eqref{eq:Fourier-expan-G} and the fact that $\overline{\sigma_{\ell-1,\chi}(n)}=\sigma_{\ell-1,\overline{\chi}}(n)$, we get
\begin{align}
    \langle f,G_{\ell,\chi}G_{K-\ell,\overline{\chi}}\rangle_{D}=\frac{\Gamma(K-1)(K-\ell-1)!D^{K-\ell}\overline{L(K-\ell,\chi)}}{(4\pi)^{K-1}(2\pi i)^{K-\ell}\overline{G(\chi)}}\sum_{n=1}^{\infty}\frac{a_f(n){\sigma_{\ell-1,\overline{\chi}}(n)}}{n^{K-1}}.\label{eq:step1RS}
\end{align}
By Lemma \ref{lem:easycomp}, we have 
    \begin{align}
\sum_{n=1}^{\infty}\frac{a_f(n)\sigma_{\ell-1,\overline{\chi}}(n)}{n^{K-1}}=\frac{L(f,K-1)L(f,\overline{\chi},K-\ell)}{L(K-\ell,\overline{\chi})}.\label{eq:RSstep2}
    \end{align}
    Combining \eqref{eq:step1RS} and \eqref{eq:RSstep2} gives the result. 
\end{proof}
\begin{lemma}\label{lem:easycomp}
  Let $f(z)=\sum_{n\geq1}a_{f}(n)q^n\in S_K$ be a normalized Hecke eigenform. Then for a Dirichlet character $\chi$, an integer $\ell\geq3$ and a complex number $s$ with $\re(s)>\ell+\frac{K-1}{2}$, we have 
  \begin{align*} 
        \sum_{n=1}^{\infty}\frac{a_f(n)\sigma_{\ell-1,\chi}(n)}{n^{s}}=\frac{L(f,s)L(f,\chi,s-\ell+1)}{L(2s-\ell+2-K,\chi)}.
  \end{align*}
\end{lemma}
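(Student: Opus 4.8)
The plan is to prove the identity by comparing Euler products on both sides. Since $f$ is a normalized Hecke eigenform of level one, $n\mapsto a_f(n)$ is multiplicative, and $\sigma_{\ell-1,\chi}(n)=\sum_{d\mid n}\chi(d)d^{\ell-1}$ is the Dirichlet convolution of the completely multiplicative function $d\mapsto\chi(d)d^{\ell-1}$ with the constant function $1$, hence also multiplicative; thus $n\mapsto a_f(n)\sigma_{\ell-1,\chi}(n)$ is multiplicative. Deligne's bound gives $|a_f(n)|\le d(n)n^{(K-1)/2}$, where $d(n)$ is the number of divisors of $n$, and trivially $|\sigma_{\ell-1,\chi}(n)|\le d(n)n^{\ell-1}$, so the series $\sum_{n\ge1}a_f(n)\sigma_{\ell-1,\chi}(n)n^{-s}$ is dominated by $\sum_n d(n)^2 n^{(K-1)/2+\ell-1-\re(s)}$ and therefore converges absolutely precisely when $\re(s)>\ell+\frac{K-1}{2}$. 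On this half-plane it equals the absolutely convergent Euler product $\prod_p\sum_{j\ge0}a_f(p^j)\sigma_{\ell-1,\chi}(p^j)p^{-js}$.

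The core step is the evaluation of the local factor at a prime $p$. Write $1-a_f(p)T+p^{K-1}T^2=(1-\alpha_p T)(1-\beta_p T)$, so that $\alpha_p+\beta_p=a_f(p)$ and $\alpha_p\beta_p=p^{K-1}$; the Hecke recursion then gives $a_f(p^j)=(\alpha_p^{j+1}-\beta_p^{j+1})/(\alpha_p-\beta_p)$, while $\sigma_{\ell-1,\chi}(p^j)=\sum_{i=0}^j\gamma_p^i$ with $\gamma_p:=\chi(p)p^{\ell-1}$ (so $\gamma_p=0$ at primes where $\chi$ vanishes). I would then invoke the elementary power-series identity
\[
\sum_{j\ge0}\frac{\alpha^{j+1}-\beta^{j+1}}{\alpha-\beta}\cdot\Big(\sum_{i=0}^j\gamma^i\Big)\,T^j=\frac{1-\alpha\beta\gamma T^2}{(1-\alpha T)(1-\beta T)(1-\alpha\gamma T)(1-\beta\gamma T)},
\]
which is the local Rankin--Selberg computation for $\mathrm{GL}_2\times\mathrm{GL}_2$ with one factor degenerate; one proves it by splitting the left side into geometric series and combining over a common denominator, whereupon the spurious factor $1-\gamma$ cancels, so the degenerate primes need no separate treatment. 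Specializing to $T=p^{-s}$, $\alpha=\alpha_p$, $\beta=\beta_p$, $\gamma=\gamma_p$ and using $\alpha_p\beta_p=p^{K-1}$, the local factor at $p$ becomes
\[
\frac{1-\chi(p)p^{-(2s-\ell+2-K)}}{\big(1-a_f(p)p^{-s}+p^{K-1-2s}\big)\big(1-\chi(p)a_f(p)p^{-(s-\ell+1)}+\chi(p)^2p^{K-1-2(s-\ell+1)}\big)},
\]
since $\alpha_p\beta_p\gamma_p p^{-2s}=\chi(p)p^{K+\ell-2-2s}$ and $K+\ell-2-2s=-(2s-\ell+2-K)$.

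Finally, taking the product over all primes identifies the two denominator factors with the Euler products of $L(f,s)$ and of $L(f,\chi,s-\ell+1)$, and the numerator with the reciprocal of the Euler product of $L(2s-\ell+2-K,\chi)$, yielding the claimed identity. I do not expect a genuine obstacle here: the only point demanding care is the algebraic bookkeeping in the displayed local identity and the verification that the exponents of $p$ match up with the three $L$-functions, which the computation above confirms.
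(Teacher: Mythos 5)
Your proof is correct, but it takes a genuinely different route from the paper. The paper works globally: it starts from the product $L(f,s)L(f,\chi,s-\ell+1)$ as a double Dirichlet series, applies the Hecke multiplicativity relation $a_f(m)a_f(n)=\sum_{d\mid(m,n)}a_f(mn/d^2)d^{K-1}$, and performs the change of variables $m\mapsto dm$, $n\mapsto dn$ to peel off the factor $L(2s-\ell+2-K,\chi)$ in three lines. You instead localize: you factor the left-hand side into an Euler product and evaluate each local factor via Ramanujan's identity
\begin{align}
\sum_{j\ge0}\frac{\alpha^{j+1}-\beta^{j+1}}{\alpha-\beta}\Big(\sum_{i=0}^{j}\gamma^{i}\Big)T^{j}
=\frac{1-\alpha\beta\gamma T^{2}}{(1-\alpha T)(1-\beta T)(1-\alpha\gamma T)(1-\beta\gamma T)},
\end{align}
then matches exponents; your bookkeeping (the numerator $1-\chi(p)p^{-(2s-\ell+2-K)}$ and the two quadratic denominator factors) is accurate, and your remark that the degenerate primes with $\chi(p)=0$ require no special treatment is right. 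The paper's argument is shorter and avoids introducing Satake parameters or Deligne's bound (a trivial bound on $a_f(n)$ would suffice for convergence, and indeed the "precisely when" in your convergence claim is an unneeded overstatement --- only absolute convergence in the stated half-plane matters); your argument has the advantage of making the local Rankin--Selberg structure explicit and of generalizing immediately to the setting where one computes each Euler factor of the convolution. Both are complete proofs of the lemma.
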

\begin{proof}
    Since $f\in S_K$ is a normalized Hecke eigenform, we have 
    \begin{align*}
        a_f(m)a_f(n)=\sum_{d\mid(m,n)}a_f\left(\frac{mn}{d^2}\right)d^{K-1}.
    \end{align*}
Note that $s$ is within the region of convergence for all the involved series. Using above Hecke relation, we get
\begin{align}
    L(f,s)L(f,\chi,s-\ell+1)
    &=\sum_{m,n\geq1}a_f(m)a_f(n)\chi(n)(mn)^{-s}n^{\ell-1} \nonumber\\&=\sum\limits_{\substack{m,n\geq1\\d\mid(m,n)}}\chi(n)a_{f}\left(\frac{mn}{d^2}\right)d^{K-1}(mn)^{-s}n^{\ell-1} \nonumber\\&=\sum_{d\geq1}\chi(d)d^{K-1+\ell-1-2s}\sum_{m,n\geq1}\chi(n)a_{f}(mn)(mn)^{-s}n^{\ell-1} \nonumber\\&=L(2s-\ell+2-K,\chi)\sum_{n=1}^{\infty}\frac{a_{f}(n)\sigma_{\ell-1,\chi}(n)}{n^s}, \nonumber
\end{align}
where the second to last inequality comes from the change of variables $m\mapsto dm$ and $n\mapsto dn$.
\end{proof}
%\todo{I feel like the constant in \cite[Corollary 4.2]{prodofEisenstein2018} is wrong. There shouldn't be $L(\ell,\overline{\chi})^2$ in the denominator.} \ques{Will the factor affect any of our dependence results?}
%\ques{If we apply the formula to the case of a general character $\chi$ such that $\chi(-1)=(-1)^{\ell}$, then we get something like
%\begin{align*}
%    \langle E_{K-\ell}(z;\mathbbm{1},\chi)E_{K-\ell}(z;\mathbbm{1},\overline{\chi})\sim L(f,K-1) f(f,\chi,\ell).
%\end{align*}
%Then, we can probably similarly prove the linear independence of $L(f,\chi,\ell)$ for varying $\ell$.
%}

%Recall that
%\begin{align}
%    \mathcal{F}_{K,\ell,\chi}(z):=\Tr_1^D(G_{K-\ell,\overline{\chi}}(z)G_{\ell,\chi}(z))-\frac{2\sigma_{k-1,\overline{\chi}}(0)\sigma_{\ell-1,\chi}(0)}{\zeta(1-K)}G_K(z),
%\end{align}
%which is a cusp form of weight $K$ and level $1$, see Corollary \ref{co:cuspproj}. 
\begin{proposition}\label{prop:Rankin-selbergperiod}
   Let $3\leq\ell\leq \frac{K-2}{2}$ be integers and $\chi$ be a primitive Dirichlet character mod $D$ such that $\chi(-1)=(-1)^{\ell}$, and $f\in S_K$ be a normalized Hecke eigenform. Then
    \begin{align}
        \langle f,\mathcal{F}_{K,\ell,\chi}\rangle=\frac{\Gamma(K-1)(K-\ell-1)!D^{K-\ell}}{(4\pi)^{K-1}(2\pi i)^{K-\ell}\overline{G(\chi)}}L(f,K-1)L(f,\overline{\chi},K-\ell),
        \label{eq:convolution}
    \end{align}
    where $\mathcal{F}_{K,\ell,\chi}$ is defined in \eqref{eq:F-def}.
\end{proposition}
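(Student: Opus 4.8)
The plan is to reduce the identity \eqref{eq:convolution} to Proposition~\ref{prop:rankinselbergconvolution} by peeling off the Eisenstein correction term in \eqref{eq:F-def} and then moving the trace $\Tr_1^D$ across the Petersson pairing. First I would record that all the pairings in play are well defined in the sense of \eqref{eq:defofPeterssoninnerproduct}: since $f$ is cuspidal, each of $f\cdot G_K$, $f\cdot G_{\ell,\chi}G_{K-\ell,\overline{\chi}}$ and $f\cdot\Tr_1^D(G_{\ell,\chi}G_{K-\ell,\overline{\chi}})$ vanishes at every cusp, hence lies in the appropriate space of cusp forms. Writing $c$ for the scalar $\tfrac{2\sigma_{\ell-1,\chi}(0)\sigma_{K-1,\overline{\chi}}(0)}{\zeta(1-K)}$ appearing in \eqref{eq:F-def} and using conjugate-linearity of $\langle f,\cdot\rangle$,
\begin{align}
\langle f,\mathcal{F}_{K,\ell,\chi}\rangle_{\Gamma_0(1)}=\langle f,\Tr_1^D(G_{\ell,\chi}G_{K-\ell,\overline{\chi}})\rangle_{\Gamma_0(1)}-\overline{c}\,\langle f,G_K\rangle_{\Gamma_0(1)}.
\end{align}
Since $G_K$ spans the Eisenstein subspace of $M_K$ while $f\in S_K$, the classical orthogonality of cusp forms to Eisenstein series (seen by unfolding the Eisenstein series defining $G_K$ and using $a_f(0)=0$) shows $\langle f,G_K\rangle_{\Gamma_0(1)}=0$, so only the trace term survives.

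It remains to evaluate $\langle f,\Tr_1^D(G_{\ell,\chi}G_{K-\ell,\overline{\chi}})\rangle_{\Gamma_0(1)}$. Here I would invoke the adjointness of the trace map to the natural inclusion: for any $g\in M_K(D)$ and any $f\in S_K$,
\begin{align}
\langle \Tr_1^D g,\,f\rangle_{\Gamma_0(1)}=\langle g,\,f\rangle_{\Gamma_0(D)}.
\end{align}
To see this one expands $\Tr_1^D g=\sum_{\gamma\in\Gamma_0(D)\backslash\Gamma_0(1)}g|_K\gamma$, notes that $\det\gamma=1$ together with the weight-$K$ modularity of $f$ for $\Gamma_0(1)$ gives $(g|_K\gamma)(z)\,\overline{f(z)}\,y^K=g(\gamma z)\overline{f(\gamma z)}\,(\im(\gamma z))^K$, and then uses $\mathrm{SL}_2(\mathbb{R})$-invariance of $d\mu$ to unfold the coset sum and replace a fundamental domain for $\Gamma_0(1)$ by one for $\Gamma_0(D)$. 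Conjugating gives $\langle f,\Tr_1^D g\rangle_{\Gamma_0(1)}=\langle f,g\rangle_{\Gamma_0(D)}$. Applying this with $g=G_{\ell,\chi}G_{K-\ell,\overline{\chi}}$, which lies in $M_K(D,\chi\overline{\chi})=M_K(D)$ because $\chi\overline{\chi}$ is the principal character mod $D$, and then quoting Proposition~\ref{prop:rankinselbergconvolution} produces exactly the right-hand side of \eqref{eq:convolution}.

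The argument is essentially bookkeeping, and I expect no real obstacle. The only points needing care are keeping straight at which level each pairing is taken — in particular that with the unnormalized convention \eqref{eq:defofPeterssoninnerproduct} the adjointness holds with constant $1$, with no index factor $[\Gamma_0(1):\Gamma_0(D)]$ — and checking that all products that occur are genuinely cuspidal so that the pairings, and hence the manipulations above, are legitimate.
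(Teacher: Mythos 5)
Your proposal is correct and follows essentially the same route as the paper: the authors likewise observe $\langle f,G_K\rangle=0$, invoke the adjointness $\langle f,\Tr_1^D g\rangle_{\Gamma_0(1)}=\langle f,g\rangle_{\Gamma_0(D)}$ (citing Gross--Zagier), and then quote Proposition~\ref{prop:rankinselbergconvolution}. Your extra remarks on unfolding the trace and on the cuspidality needed for convergence are just an expanded version of the same bookkeeping.
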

\begin{proof}
    Since $\langle f, G_{K}\rangle=0$, and $\langle f,g\rangle_{M}=\langle f,\Tr_N^M g\rangle_{N}$ for $N\mid M$, $f\in S_K(N)$, $g\in M_{K}(M)$ (see \cite[p.~271]{GZ-86}), we have
    \begin{align}
    \langle f,\mathcal{F}_{K,\ell,\chi}\rangle_1&=\langle f,\Tr_1^D(G_{\ell,\chi}(z)G_{K-\ell,\overline{\chi}}(z))\rangle_1 \nonumber\\&= \langle f,G_{\ell,\chi}G_{K-\ell,\overline{\chi}}\rangle_{D}. \nonumber
    \end{align}
    Now the result follows from Proposition \ref{prop:rankinselbergconvolution}.
\end{proof}
\begin{remark}\label{remark:centralprod}
    We would like to mention that Proposition \ref{prop:rankinselbergconvolution} (and thus Proposition \ref{prop:Rankin-selbergperiod}) also holds true for $\ell=\frac{K}{2}$, see \cite[p.~146]{Zagier1976} and \cite[Proposition 4.1]{prodofEisenstein2018}.
\end{remark}
To tackle  Theorems \ref{thm:mainthmfixDdifferentell2} and \ref{thm:twistbydifferentD2}, we need a similar inner product formula for a normalized Hecke eigenform $f$ and Rankin-Cohen brackets of Eisenstein series. 

\begin{proposition}\label{prop:rankinselbergconvolution2}
 Let $3\leq\ell\leq\frac{K-4}{2}$ and $k=K-2-\ell$ be integers, $\chi$ a primitive Dirichlet character mod $D$ such that $\chi(-1)=(-1)^{\ell}$, and $f(z)=\sum_{n\geq1}a_f(n)q^n\in S_K$ be a normalized Hecke eigenform. Then
    \begin{align}
         \langle f,[G_{\ell,\chi},G_{k,\overline{\chi}}]_1\rangle_{D}=\frac{-k!(K-2)!D^{k}}{(4\pi)^{K-1}(2\pi i)^{k}\overline{G(\chi)}}L(f,K-2)L(f,\overline{\chi},K-\ell-1). \nonumber
     \end{align}
\end{proposition}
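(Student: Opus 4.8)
The plan is to run the proof of Proposition~\ref{prop:rankinselbergconvolution} once more, now invoking Lemma~\ref{lem:RankinSelbergZagier} with $e=1$ instead of $e=0$; this is exactly why the hypothesis sharpens to $\ell\le\frac{K-4}{2}$, since with $k_1=\ell$ and $k_2=k=K-2-\ell$ one needs $k_2\ge k_1+2>2$, i.e. $\ell\le\frac{K-4}{2}$. First I would rewrite the second Eisenstein factor using the identity from Section~\ref{sect:Fourier} that expresses $G_{k,\overline\chi}$ as an explicit scalar multiple of $E^*_{k,D}(\cdot;\chi)$. Since $[\,\cdot\,,\,\cdot\,]_1$ is bilinear and the Petersson product $\langle f,\cdot\rangle$ is conjugate-linear in its second slot, this pulls the conjugate of that scalar outside, reducing the problem to computing $\langle f,[G_{\ell,\chi},E^*_{k,D}(\cdot;\chi)]_1\rangle_{\Gamma_0(D)}$.

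Next I would apply Lemma~\ref{lem:RankinSelbergZagier} with $N=D$, $k_1=\ell$, $k_2=k=K-2-\ell$, $e=1$, $g=G_{\ell,\chi}$ and $f$ of trivial nebentypus, so that the output weight $k_1+k_2+2e$ equals $K$ and the summation exponent $k_1+k_2+e-1$ equals $K-2$. Using $\tfrac{(-1)^e}{e!}=-1$, $\tfrac{\Gamma(k_2+e)}{\Gamma(k_2)}=k$ and $\overline{\sigma_{\ell-1,\chi}(n)}=\sigma_{\ell-1,\overline\chi}(n)$, the inner product becomes an explicit constant times $\sum_{n\ge1}a_f(n)\sigma_{\ell-1,\overline\chi}(n)\,n^{-(K-2)}$. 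I then evaluate this Dirichlet series by Lemma~\ref{lem:easycomp}, applied with character $\overline\chi$, index $\ell$, and $s=K-2$ (legitimate because $\re(s)=K-2>\ell+\tfrac{K-1}{2}$ follows from $\ell\le\frac{K-4}{2}$); its output is $L(f,K-2)L(f,\overline\chi,K-\ell-1)$ over a Dirichlet $L$-value. Substituting back and simplifying the conjugates ($\overline{G(\chi)}$, $\overline{(-2\pi i)^k}=(2\pi i)^k$, $\overline{L(k,\chi)}=L(k,\overline\chi)$) then produces the stated identity.

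I expect no conceptual obstacle: the argument is structurally identical to that of Proposition~\ref{prop:rankinselbergconvolution}, the only new ingredient being the $e=1$ instance of Zagier's formula, which contributes the extra factor $\Gamma(k_2+e)/\Gamma(k_2)=k$ and the sign $(-1)^e/e!=-1$. One should note that the first-order Rankin--Cohen bracket $[G_{\ell,\chi},G_{k,\overline\chi}]_1$ is a priori only an element of $M_K(\Gamma_0(D))$, but this causes no difficulty: any Eisenstein component is orthogonal to the cusp form $f$, the Petersson integral converges because $f$ decays at the cusps, and Lemma~\ref{lem:RankinSelbergZagier} is already formulated for an arbitrary $g\in M_{k_1}(N,\chi_1)$, so it applies verbatim. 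The only genuinely laborious step is the bookkeeping of the $\Gamma$-factors, the powers of $2\pi i$, the Gauss sum $G(\chi)$ and the Dirichlet $L$-values coming from the Eisenstein normalization, from Lemma~\ref{lem:RankinSelbergZagier}, and from Lemma~\ref{lem:easycomp}; this is where the displayed constant is pinned down.
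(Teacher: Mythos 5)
Your plan coincides with the paper's proof: rewrite $G_{k,\overline{\chi}}$ as a multiple of $E^{\ast}_{k,D}(\cdot;\chi)$ via \eqref{eq:EisensteinforR-S}, apply Lemma \ref{lem:RankinSelbergZagier} with $k_1=\ell$, $k_2=k$, $e=1$, and evaluate the resulting Dirichlet series by Lemma \ref{lem:easycomp} at $s=K-2$; all of your parameter checks (the condition $k_2\ge k_1+2$, the convergence hypothesis of Lemma \ref{lem:easycomp}, and the factors $(-1)^e/e!=-1$ and $\Gamma(k_2+1)/\Gamma(k_2)=k$) are correct.

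One caveat about the bookkeeping you deferred to the end. Lemma \ref{lem:easycomp} at $s=K-2$ with character $\overline{\chi}$ produces the denominator $L(2s-\ell+2-K,\overline{\chi})=L(K-2-\ell,\overline{\chi})=L(k,\overline{\chi})$, not $L(k+1,\overline{\chi})$; this cancels exactly against the factor $\overline{L(k,\chi)}=L(k,\overline{\chi})$ coming from the Eisenstein normalization, so a careful execution of your outline yields
\begin{align}
\langle f,[G_{\ell,\chi},G_{k,\overline{\chi}}]_1\rangle_{\Gamma_0(D)}=\frac{-k!\,(K-2)!\,D^{k}}{(4\pi)^{K-1}(2\pi i)^{k}\,\overline{G(\chi)}}\,L(f,K-2)\,L(f,\overline{\chi},K-\ell-1),
\end{align}
i.e.\ the stated constant with the ratio $L(k,\overline{\chi})/L(k+1,\overline{\chi})$ removed. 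The discrepancy originates in the paper's own intermediate step \eqref{eq:22}, which records the denominator as $L(K-1-\ell,\overline{\chi})$ instead of $L(K-2-\ell,\overline{\chi})$. This is harmless for the applications (only the nonvanishing of the constant is used in Proposition \ref{prop:linearindeperiodtomodularform2}), but you should be aware that faithfully carrying out your plan will not literally reproduce the displayed identity.
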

\begin{proof}
Again, by Lemma \ref{lem:RankinSelbergZagier} (for $k_1=\ell, k_2=k$), \eqref{eq:EisensteinforR-S} and the fact that $\overline{\sigma_{\ell-1,\chi}(n)}=\sigma_{\ell-1,\overline{\chi}}(n)$, we get
    \begin{align}
        \langle f,[G_{\ell,\chi},G_{k,\overline{\chi}}]_1\rangle_{D}&=-\frac{\Gamma(K-1)\Gamma(k+1)}{(4\pi)^{K-1}\Gamma(k)}\cdot\frac{(k-1)!D^k\overline{L(k,\chi)}}{(2\pi i)^{k}\overline{G(\chi)}}\sum_{n=1}^{\infty}\frac{a_f(n)\sigma_{\ell-1,\overline{\chi}}(n)}{n^{K-2}}.\label{eq:11}
    \end{align}
As $K-2>\ell+\frac{K-1}{2}$, we have  
    \begin{align}
        \sum_{n=1}^{\infty}\frac{a_f(n)\sigma_{\ell-1,\chi}(n)}{n^{K-2}}&=\frac{L(f,K-2)L(f,\overline{\chi},K-2-\ell+1)}{L(2(K-2)-\ell+2-K,\overline{\chi})} \nonumber\\&=\frac{L(f,K-2)L(f,\overline{\chi},K-\ell-1)}{L(k,\overline{\chi})},\label{eq:22}
    \end{align}
by Lemma \ref{lem:easycomp}. Plugging   \eqref{eq:22} into \eqref{eq:11} gives the result. 
\end{proof}
%\begin{remark}
%    We would like to mention that Proposition \ref{prop:rankinselbergconvolution2} also holds for $k=\ell$ by a similar argument in \cite[Proposition 5.5]{kayath2024subspacesspannedeigenformsnonvanishing}.
%\end{remark}
%Recall that
%$$\mathcal{G}_{K,\ell,\chi}(z)=\Tr^D_1([G_{K-2-\ell,\overline{\chi}}(z),G_{\ell,\chi}(z)]),$$
%which is a cusp form of weight $K$ and level one. 
The following proposition generalizes \cite[Proposition 1]{Kohnen-Zagier1981} to the  Rankin-Cohen bracket.
\begin{proposition}\label{prop:Rankin-selbergperiod2}
   Under the hypothesis of Proposition \ref{prop:rankinselbergconvolution2}, we have 
    \begin{align}
        \langle f,\mathcal{G}_{K,\ell,\chi}\rangle=\frac{-k!(K-2)!D^{k}}{(4\pi)^{K-1}(2\pi i)^{k}\overline{G(\chi)}}L(f,K-2)L(f,\overline{\chi},K-\ell-1).\label{eq:convolution2}
    \end{align}
\end{proposition}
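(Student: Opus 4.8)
The plan is to mimic exactly the argument used for Proposition \ref{prop:Rankin-selbergperiod}, reducing the inner product against $\mathcal{G}_{K,\ell,\chi}$ to the inner product against $[G_{\ell,\chi},G_{k,\overline{\chi}}]_1$ on $\Gamma_0(D)$, and then invoking Proposition \ref{prop:rankinselbergconvolution2}. Recall from \eqref{eq:G-def} that $\mathcal{G}_{K,\ell,\chi}(z)=\Tr_1^D([G_{\ell,\chi}(z),G_{K-2-\ell,\overline{\chi}}(z)]_1)$. Since $\ell\geq 3$ and $k=K-2-\ell\geq 3$, the Rankin-Cohen bracket $[G_{\ell,\chi},G_{k,\overline{\chi}}]_1$ lies in $M_K(D,\chi\cdot\overline{\chi})=M_K(D)$ by Definition \ref{def:rankincohen} (here $e=1>0$ but since the two Eisenstein series have nonzero constant terms the bracket need not be cuspidal, so it is a genuine element of $M_K(D)$, not $S_K(D)$); hence its trace is a well-defined element of $M_K(1)=M_K$.

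First I would use the adjointness of the trace map with respect to the Petersson inner product: for $f\in S_K=S_K(1)$ and $g\in M_K(D)$ one has $\langle f,g\rangle_{\Gamma_0(D)}=\langle f,\Tr_1^D g\rangle_{\Gamma_0(1)}$ (see \cite[p.~271]{GZ-86}, as already cited in the proof of Proposition \ref{prop:Rankin-selbergperiod}). Applying this with $g=[G_{\ell,\chi},G_{k,\overline{\chi}}]_1$ gives
\begin{align}
\langle f,\mathcal{G}_{K,\ell,\chi}\rangle = \langle f,\Tr_1^D([G_{\ell,\chi},G_{k,\overline{\chi}}]_1)\rangle = \langle f,[G_{\ell,\chi},G_{k,\overline{\chi}}]_1\rangle_{\Gamma_0(D)}.
\end{align}
Then the right-hand side is precisely the quantity evaluated in Proposition \ref{prop:rankinselbergconvolution2}, which yields the claimed formula
\begin{align}
\langle f,\mathcal{G}_{K,\ell,\chi}\rangle = \frac{-k!(K-2)!D^{k}L(k,\overline{\chi})}{(4\pi)^{K-1}(2\pi i)^{k}\overline{G(\chi)}L(k+1,\overline{\chi})}L(f,K-2)L(f,\overline{\chi},K-\ell-1).
\end{align}

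Unlike in Proposition \ref{prop:Rankin-selbergperiod}, there is no correction term to subtract here: because the Rankin-Cohen bracket with $e=1$ kills the ``diagonal'' Eisenstein contribution that produced the $G_K$ term in \eqref{eq:F-def}, the definition \eqref{eq:G-def} of $\mathcal{G}_{K,\ell,\chi}$ is simply the trace of the bracket with no extra subtraction. Consequently I do not even need to separately verify that $\langle f,G_K\rangle=0$; the only inputs are the adjointness of $\Tr_1^D$ and Proposition \ref{prop:rankinselbergconvolution2}. The one point requiring a line of care is confirming that $[G_{\ell,\chi},G_{k,\overline{\chi}}]_1$ really transforms under $\Gamma_0(D)$ with trivial nebentypus (so that the trace down to level $1$ makes sense), which follows since $\chi\cdot\overline{\chi}$ is the principal character mod $D$; this is the only place the square-free and oddness hypotheses on $D$ implicitly enter, through the conventions for nebentypus characters. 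Thus the proof is essentially immediate given the preceding results, and I expect no real obstacle.
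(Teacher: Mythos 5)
Your proposal is correct and takes essentially the same route as the paper: both reduce $\langle f,\mathcal{G}_{K,\ell,\chi}\rangle$ to $\langle f,[G_{\ell,\chi},G_{k,\overline{\chi}}]_1\rangle_{\Gamma_0(D)}$ via the adjointness of $\Tr_1^D$ with respect to the Petersson inner product and then invoke Proposition \ref{prop:rankinselbergconvolution2}. Your additional remarks (trivial nebentypus of the bracket, absence of a correction term because the $e=1$ bracket annihilates the constant term) are accurate but not needed beyond what the paper's two-line proof records.
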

\begin{proof}
    By definition \eqref{eq:G-def},
 $\langle f,\mathcal{G}_{K,\ell,\chi}\rangle=\langle f,\Tr_1^D[G_{\ell,\chi},G_{k,\overline{\chi}}]_1\rangle_1= \langle f,[G_{\ell,\chi},G_{K-\ell,\overline{\chi}}]_1\rangle_{D}.
    $
    So the result follows from Proposition \ref{prop:rankinselbergconvolution2}. %\ques{What is the purpose of switching the order in \eqref{eq:G-def}? This will result in a minus sign.}
\end{proof}
%\begin{remark}\label{remark:centralrankincohen}
%In fact, Proposition \ref{prop:Rankin-selbergperiod2} can be extended to $k=\ell$, i.e., $\ell=\frac{K-2}{2}$. For example, see \cite[Proposition 5.6]{kayath2024subspacesspannedeigenformsnonvanishing} for the case when $\chi$ is a quadratic twist.
%\end{remark}
We now establish the equivalence between the linear independence of twisted periods and the linear independence of the corresponding cusp forms $\mathcal{F}_{K,\ell,\chi}$ or $\mathcal{G}_{K,\ell,\chi}$.
\begin{proposition}\label{prop:linearindeperiodtomodularform}
Let $\chi$ be a primitive Dirichlet character mod $D$. Let $n\geq1$ and $3\leq\ell_1<\ell_2<...<\ell_n\leq\frac{K-2}{2}$ be integers such that $\chi(-1)=(-1)^{\ell_i}$ for all $1\leq i\leq n$. Then
$\{\mathcal{F}_{K,\ell_i,\chi}\}_{i=1}^n$ is linearly independent if and only if $\{r_{\ell_i-1,\chi}\}_{i=1}^n$ is linearly independent.
\end{proposition}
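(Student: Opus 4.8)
The plan is to use the nondegeneracy of the Petersson pairing on $S_K$ to pass from the mixed statement (the $\mathcal F_{K,\ell_i,\chi}$ lie in $S_K$, the $r_{\ell_i-1,\chi}$ in $S_K^\ast$) to a single statement about functionals on $S_K$ evaluated on a basis of Hecke eigenforms, and then to recognize those functionals, up to an invertible rescaling, as the twisted periods. Concretely, I would fix an orthogonal basis $f_1,\dots,f_d$ of $S_K$ of normalized Hecke eigenforms (available since the level is one), and recall that $\mathcal F_{K,\ell,\chi}\in S_K$ by \eqref{eq:F-def}. Since $\langle\cdot,\cdot\rangle$ is nondegenerate on $S_K$, a relation $\sum_i c_i\mathcal F_{K,\ell_i,\chi}=0$ holds if and only if $\sum_i c_i\langle f_j,\mathcal F_{K,\ell_i,\chi}\rangle=0$ for every $j$.

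The core step is, for a fixed eigenform $f=f_j$, to chain together Proposition \ref{prop:Rankin-selbergperiod}, which gives $\langle f,\mathcal F_{K,\ell,\chi}\rangle$ as an explicit constant times $L(f,K-1)L(f,\overline\chi,K-\ell)$; the functional equation \eqref{eq:functionaleqoftwistedL} evaluated at $s=\ell$, which rewrites $L(f,\overline\chi,K-\ell)$ as a nonzero constant (a ratio of $\Gamma$-factors and powers of $2\pi$ times $w_\chi^{-1}$) times $L(f,\chi,\ell)$; and \eqref{eq:periodandLvalue}, which rewrites $L(f,\chi,\ell)$ as a nonzero multiple of $r_{\ell-1,\chi}(f)$. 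Multiplying these out yields an identity
\[
\langle f_j,\mathcal F_{K,\ell_i,\chi}\rangle=A_{K,\ell_i,\chi}\,L(f_j,K-1)\,r_{\ell_i-1,\chi}(f_j),
\]
where $A_{K,\ell,\chi}\ne 0$ depends only on $K,\ell,\chi$ and not on $f_j$; its nonvanishing uses that $\chi$ is primitive (so $|G(\chi)|=\sqrt D$ and $w_\chi\ne 0$) and that the remaining $\Gamma$- and $2\pi$-factors are nonzero. The hypothesis $\ell_i\le\frac{K-2}{2}$ is precisely what makes Proposition \ref{prop:Rankin-selbergperiod} (and the evaluation of the functional equation) applicable here.

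The conclusion is then linear algebra. Because $K\ge 12$ we have $K-1>\frac{K+1}{2}$, so $s=K-1$ lies in the half-plane of absolute convergence of the Euler product of $L(f_j,s)$, where every Euler factor, and hence the product, is nonzero; thus $L(f_j,K-1)\ne 0$ for all $j$. Therefore $\sum_i c_i\mathcal F_{K,\ell_i,\chi}=0$ is equivalent to $\sum_i (c_iA_{K,\ell_i,\chi})\,r_{\ell_i-1,\chi}(f_j)=0$ for all $j$, and, the $f_j$ being a basis, this is equivalent to $\sum_i (c_iA_{K,\ell_i,\chi})\,r_{\ell_i-1,\chi}=0$ in $S_K^\ast$. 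Since $(c_i)\mapsto(c_iA_{K,\ell_i,\chi})$ is an invertible change of variables, nontrivial dependence relations among $\{\mathcal F_{K,\ell_i,\chi}\}$ correspond bijectively to those among $\{r_{\ell_i-1,\chi}\}$, which is the asserted equivalence. I expect the constant bookkeeping and the nonvanishing of $L(f_j,K-1)$ to be routine; the one point that genuinely needs care is checking that $A_{K,\ell,\chi}$ is nonzero and independent of the eigenform, so that it can be absorbed into the coefficients $c_i$, and beyond that I do not anticipate a real obstacle.
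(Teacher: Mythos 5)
Your proposal is correct and follows essentially the same route as the paper: both evaluate against the basis of normalized Hecke eigenforms, combine Proposition \ref{prop:Rankin-selbergperiod} with the functional equation \eqref{eq:functionaleqoftwistedL} and \eqref{eq:periodandLvalue} to obtain $\langle f_j,\mathcal F_{K,\ell_i,\chi}\rangle=A_{K,\ell_i,\chi}L(f_j,K-1)r_{\ell_i-1,\chi}(f_j)$ with $A_{K,\ell_i,\chi}\neq0$ independent of $f_j$, and use $L(f_j,K-1)\neq0$ to transfer dependence relations back and forth. The only cosmetic slip is that the Petersson product is conjugate-linear in its second argument, so pairing $f_j$ against $\sum_i c_i\mathcal F_{K,\ell_i,\chi}$ produces the coefficients $\overline{c_i}$ rather than $c_i$; since conjugation is a bijection on coefficient vectors this does not affect the equivalence.
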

\begin{proof}
Note that Proposition \ref{prop:Rankin-selbergperiod} together with the definition of twisted period \eqref{eq:periodandLvalue}  and the functional equation \eqref{eq:functionaleqoftwistedL} implies that for $3\leq\ell\leq\frac{K-2}{2}$ and a normalized Hecke eigenform $f\in S_K$
\begin{align}
\langle f, \mathcal{F}_{K,\ell,\chi}\rangle=A_{K,\ell,\chi}L(f,K-1) r_{\ell-1,\chi}(f), \nonumber
\end{align}
 where $A_{K,\ell,\chi}$ is some nonzero constant depending only on $K$, $\ell$ and $\chi$, and $L(f,K-1)\neq0$ since $K-1$ is within the region of absolute convergence for $L(f,s)$. Let $\mathcal{H}_K$ denote the set of normalized Hecke eigenform in $S_K$. Then
\begin{align}\sum_{i=1}^na_i\mathcal{F}_{K,\ell_i,\chi}=0\quad{\rm if~and~only~if}\quad\sum_{i=1}^n\overline{a_i}\langle f_j,\mathcal{F}_{K,\ell_i,\chi}\rangle=0\quad{\rm for~all}~f_j\in\mathcal{H}_K.\label{eq:suminnerproduct} \nonumber\end{align} 
Suppose that $\{\mathcal{F}_{K,\ell_i,\chi}\}_{i=1}^n$ is linearly independent. We claim that $\{r_{\ell_i-1,\chi}\}_{i=1}^n$
is linearly independent. If $\sum_{i=1}^n b_ir_{\chi,\ell_{i}-1}=0\in S_K^{\ast},$ then
\begin{align}
    \sum_{i=1}^nb_i r_{\ell_i-1,\chi}(f_j)=L(f_j,K-1)^{-1}\sum_{i=1}^nb_iA_{K,\ell_i,\chi}^{-1}\langle f_j,\mathcal{F}_{K,\ell_i,\chi}\rangle=0~{\rm for~all}~f_j\in\mathcal{H}_K, \nonumber
\end{align}
which implies that
$\sum_{i=1}^n\overline{b_iA_{K,\ell_i,\chi}^{-1}}\mathcal{F}_{K,\ell_i,\chi}=0\in S_K$.
Since $\{\mathcal{F}_{K,\ell_i,\chi}\}_{i=1}^n$ is linearly independent, we have $b_iA_{K,\ell_i,\chi}^{-1}=0$, implying $b_i=0$ for all $1\leq i\leq n$. 

Conversely, suppose that $\{r_{\ell_i-1,\chi}\}_{i=1}^n$
is linearly independent. We show that $\{\mathcal{F}_{K,\ell_i,\chi}\}_{i=1}^n$ is linearly independent.  If
$\sum_{i=1}^n a_i\mathcal{F}_{K,\ell_i,\chi}=0\in S_K,$
then
\begin{align}&\sum_{i=1}^n\overline{a_i}\langle f_j,\mathcal{F}_{K,\ell_i,\chi}\rangle=0~{\rm for~all}~f_j\in\mathcal{H}_K, \nonumber\end{align}
which implies that $L(f_j,K-1)\sum_{i=1}^n\overline{a_i}A_{K,\ell_i,\chi}\cdot r_{\ell_i-1,\chi}(f_j)=0~{\rm for~all}~f_j\in\mathcal{H}_K$.
Since $L(f_j,K-1)\neq0$ for all $f_j\in\mathcal{H}_K$ and $\{r_{\ell_i-1,\chi}\}_{i=1}^n$
is linearly independent, we get $\overline{a_i}A_{K,\ell_i,\chi}=0$, and thus $a_i=0$ for all $1\leq i\leq n$. This completes the proof.
\end{proof}
\begin{proposition}\label{prop:linearindeperiodtomodularform2}
Let $\chi$ be a primitive Dirichlet character mod $D$. Let $n\geq1$ and $3\leq\ell_1<\ell_2<...<\ell_n\leq\frac{K-4}{2}$ be integers such that $\chi(-1)=(-1)^{\ell_i}$ for all $1\leq i\leq n$. Then
$\{\mathcal{G}_{K,\ell_i,\chi}\}_{i=1}^n$ is linearly independent if and only if $\{r_{\ell_i,\chi}\}_{i=1}^n$ on $S_K$ is linearly independent.
\end{proposition}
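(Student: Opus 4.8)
The plan is to run the proof of Proposition~\ref{prop:linearindeperiodtomodularform} essentially verbatim, with Proposition~\ref{prop:Rankin-selbergperiod2} and the Rankin--Cohen construction \eqref{eq:G-def} playing the roles of Proposition~\ref{prop:Rankin-selbergperiod} and the product construction there. The first and only substantive step is to convert the inner-product formula of Proposition~\ref{prop:Rankin-selbergperiod2} into a statement about the twisted period: for every integer $3\le\ell\le\frac{K-4}{2}$ with $\chi(-1)=(-1)^{\ell}$ and every normalized Hecke eigenform $f\in S_K$ there should be a nonzero constant $B_{K,\ell,\chi}$, depending only on $K$, $\ell$ and $\chi$, with
\[
\langle f,\mathcal{G}_{K,\ell,\chi}\rangle=B_{K,\ell,\chi}\,L(f,K-2)\,r_{\ell,\chi}(f).
\]
To produce it, note that Proposition~\ref{prop:Rankin-selbergperiod2} already writes $\langle f,\mathcal{G}_{K,\ell,\chi}\rangle$ as an explicit nonzero scalar times $L(f,K-2)\,L(f,\overline{\chi},K-\ell-1)$; the functional equation \eqref{eq:functionaleqoftwistedL} evaluated at $s=\ell+1$ turns $L(f,\overline{\chi},K-\ell-1)$ into a nonzero scalar times $L(f,\chi,\ell+1)$, and the defining relation \eqref{eq:periodandLvalue} turns $L(f,\chi,\ell+1)$ into a nonzero scalar times $r_{\ell,\chi}(f)$.

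Checking $B_{K,\ell,\chi}\ne0$ is routine bookkeeping: $G(\chi)\ne0$ since $\chi$ is primitive; with $k=K-2-\ell$, the bound $\ell\le\frac{K-4}{2}$ forces $K\ge10$ and $k\ge\frac{K}{2}\ge5$, so $L(k,\overline{\chi})$ and $L(k+1,\overline{\chi})$ (which occur in the constant of Proposition~\ref{prop:Rankin-selbergperiod2}) are nonzero as convergent Euler products in $\re(s)>1$; the $\Gamma$-factors of \eqref{eq:functionaleqoftwistedL} are evaluated at the positive integers $\ell+1\ge4$ and $K-\ell-1\ge6$, hence are finite and nonzero; and the root number $i^{K}w_{\chi}=i^{K}G(\chi)^2/D$ has modulus $1$. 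Likewise $L(f,K-2)\ne0$, since $K\ge10$ places $K-2$ in the region of absolute convergence of $L(f,s)$, where it is given by a convergent Euler product. The parity hypothesis $\chi(-1)=(-1)^{\ell}$ is exactly what makes $G_{\ell,\chi}$ and $G_{K-2-\ell,\overline{\chi}}$ nonzero and lets Proposition~\ref{prop:Rankin-selbergperiod2} apply.

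Once the displayed identity is available, the rest is formal duality, exactly as in Proposition~\ref{prop:linearindeperiodtomodularform}. Let $\mathcal{H}_K$ be the set of normalized Hecke eigenforms in $S_K$, an orthogonal basis of $S_K$. Then a relation $\sum_{i=1}^{n}a_i\mathcal{G}_{K,\ell_i,\chi}=0$ in $S_K$ holds if and only if $\sum_{i=1}^{n}\overline{a_i}\langle f_j,\mathcal{G}_{K,\ell_i,\chi}\rangle=0$ for all $f_j\in\mathcal{H}_K$; by the displayed identity together with $L(f_j,K-2)\ne0$, this holds if and only if $\sum_{i=1}^{n}\overline{a_i}B_{K,\ell_i,\chi}\,r_{\ell_i,\chi}(f_j)=0$ for all $f_j\in\mathcal{H}_K$, i.e., $\sum_{i=1}^{n}\overline{a_i}B_{K,\ell_i,\chi}\,r_{\ell_i,\chi}=0$ in $S_K^{\ast}$ (since $\mathcal{H}_K$ spans $S_K$). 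Because each $B_{K,\ell_i,\chi}$ is invertible, the coefficient vector $(\overline{a_i}B_{K,\ell_i,\chi})_i$ vanishes if and only if $(a_i)_i$ does, so $\{\mathcal{G}_{K,\ell_i,\chi}\}_{i=1}^{n}$ is linearly independent if and only if $\{r_{\ell_i,\chi}\}_{i=1}^{n}$ is. I do not expect a genuine obstacle in this argument; the only point demanding a little care is tracking the $\Gamma$- and Dirichlet $L$-factors through \eqref{eq:functionaleqoftwistedL} and \eqref{eq:periodandLvalue} to confirm that the constant $B_{K,\ell,\chi}$ is nonzero.
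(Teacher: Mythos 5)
Your proposal is correct and follows the paper's own argument essentially verbatim: the paper likewise derives $\langle \mathcal{G}_{K,\ell,\chi},f\rangle=B_{K,\ell,\chi}L(f,K-2)\,r_{\ell,\chi}(f)$ from Proposition \ref{prop:Rankin-selbergperiod2}, the functional equation \eqref{eq:functionaleqoftwistedL}, and \eqref{eq:periodandLvalue}, and then repeats the duality argument of Proposition \ref{prop:linearindeperiodtomodularform}. Your explicit verification that $B_{K,\ell,\chi}\neq0$ is a welcome elaboration of a point the paper leaves implicit.
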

\begin{proof}
    Note that Proposition \ref{prop:Rankin-selbergperiod2} together with the definition of twisted period \eqref{eq:periodandLvalue}  and the functional equation \eqref{eq:functionaleqoftwistedL} implies that for $3\leq\ell\leq\frac{K-4}{2}$ and a normalized Hecke eigenform $f\in S_K$ we have
\begin{align}
\langle\mathcal{G}_{K,\ell,\chi},f\rangle=B_{K,\ell,\chi}L(f,K-2) r_{\ell,\chi}(f), \nonumber
\end{align}
 where $B_{K,\ell,\chi}$ is some nonzero constant depending only on $K$, $\ell$ and $\chi$, and $L(f,K-2)\neq0$. Then the proof proceeds in a similar way as the previous one.
\end{proof}
The following two propositions can be proved in the same way. We omit their proofs.
\begin{proposition}
   Let $3\leq\ell\leq\frac{K-2}{2}$ be an integer, and $\chi_1,...,\chi_n$ be primitive Dirichlet characters mod $D$ such that $\chi_i(-1)=(-1)^{\ell}$ and $\chi_i(2)$ are pairwise distinct for $1\leq i\leq n$. Then $\{\mathcal{F}_{K,\ell,\chi_i}\}_{i=1}^n$ is linearly independent if and only if $\{r_{\ell-1,\chi_i}\}_{i=1}^n$ on $S_K$ is linearly independent.
\end{proposition}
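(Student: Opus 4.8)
The plan is to follow the exact template already established in the proof of Proposition~\ref{prop:linearindeperiodtomodularform}, now with the roles of the indices $\ell_i$ and the single character $\chi$ interchanged with a single index $\ell$ and varying characters $\chi_i$. The starting point is Proposition~\ref{prop:Rankin-selbergperiod}, which gives, for each primitive $\chi_i$ mod $D$ with $\chi_i(-1)=(-1)^\ell$ and each normalized Hecke eigenform $f\in S_K$,
\begin{align}
\langle f,\mathcal{F}_{K,\ell,\chi_i}\rangle=\frac{\Gamma(K-1)(K-\ell-1)!D^{K-\ell}}{(4\pi)^{K-1}(2\pi i)^{K-\ell}\overline{G(\chi_i)}}L(f,K-1)L(f,\overline{\chi_i},K-\ell).
\end{align}
Combining this with the definition of the twisted period \eqref{eq:periodandLvalue} and the functional equation \eqref{eq:functionaleqoftwistedL}, one rewrites the right-hand side as $A_{K,\ell,\chi_i}L(f,K-1)\,r_{\ell-1,\chi_i}(f)$, where $A_{K,\ell,\chi_i}$ is a nonzero constant depending only on $K$, $\ell$ and $\chi_i$ (the functional equation is needed to pass from $L(f,\overline{\chi_i},K-\ell)$, which appears in the inner-product formula, to the value $L(f,\chi_i,\ell)$ packaged in $r_{\ell-1,\chi_i}$; all the gamma and Gauss-sum factors are nonzero, and $L(f,K-1)\neq0$ since $K-1$ lies in the region of absolute convergence).

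Next I would record the Hecke-basis duality statement: since $\mathcal{H}_K$, the set of normalized Hecke eigenforms, is a basis of $S_K$, an element $g=\sum_i a_i\mathcal{F}_{K,\ell,\chi_i}$ of $S_K$ vanishes if and only if $\langle f_j,g\rangle=0$ for all $f_j\in\mathcal{H}_K$, i.e. $\sum_i\overline{a_i}\langle f_j,\mathcal{F}_{K,\ell,\chi_i}\rangle=0$ for all $f_j$. Then the proof splits into the two implications exactly as before. If $\{\mathcal{F}_{K,\ell,\chi_i}\}_{i=1}^n$ is linearly independent and $\sum_i b_i r_{\ell-1,\chi_i}=0$ in $S_K^\ast$, evaluating at each $f_j\in\mathcal{H}_K$ and using $\langle f_j,\mathcal{F}_{K,\ell,\chi_i}\rangle=A_{K,\ell,\chi_i}L(f_j,K-1)r_{\ell-1,\chi_i}(f_j)$ together with $L(f_j,K-1)\neq0$ gives $\sum_i\overline{b_iA_{K,\ell,\chi_i}^{-1}}\langle f_j,\mathcal{F}_{K,\ell,\chi_i}\rangle=0$ for all $j$, hence $\sum_i\overline{b_iA_{K,\ell,\chi_i}^{-1}}\mathcal{F}_{K,\ell,\chi_i}=0$, whence $b_i=0$ for all $i$. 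Conversely, if $\{r_{\ell-1,\chi_i}\}_{i=1}^n$ is linearly independent and $\sum_i a_i\mathcal{F}_{K,\ell,\chi_i}=0$, then $\sum_i\overline{a_i}\langle f_j,\mathcal{F}_{K,\ell,\chi_i}\rangle=0$ for all $f_j$, so $L(f_j,K-1)\sum_i\overline{a_i}A_{K,\ell,\chi_i}\,r_{\ell-1,\chi_i}(f_j)=0$, and dividing by the nonzero $L(f_j,K-1)$ and invoking linear independence of the periods forces $a_i=0$. This closes both directions.

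There is essentially no genuine obstacle here: the argument is formally identical to Proposition~\ref{prop:linearindeperiodtomodularform}, which is why the excerpt says ``can be proved in the same way.'' The only point requiring a little care is that the hypotheses of Proposition~\ref{prop:Rankin-selbergperiod} must hold for \emph{each} $\chi_i$ separately --- namely $3\le\ell\le\frac{K-2}{2}$ and $\chi_i(-1)=(-1)^\ell$ --- which is exactly what is assumed in the statement; the additional hypothesis that the values $\chi_i(2)$ are pairwise distinct plays no role in this equivalence and is only needed later to establish the nonvanishing of the determinant of $P_{K,\ell,\chi_1,\dots,\chi_n}$. One should also note that $\mathcal{F}_{K,\ell,\chi_i}\in S_K$ (all these cusp forms live in the same space $S_K$, despite the auxiliary level-$D$ Eisenstein series used to construct them), so that linear independence is being tested in a single fixed vector space and the Hecke-eigenbasis duality applies verbatim.
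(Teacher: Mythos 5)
Your proposal is correct and follows exactly the route the paper intends: the paper omits this proof precisely because it is word-for-word the argument of Proposition \ref{prop:linearindeperiodtomodularform} with the single character $\chi$ and varying indices $\ell_i$ replaced by a single index $\ell$ and varying characters $\chi_i$, which is what you carry out. Your side remarks — that the distinctness of the $\chi_i(2)$ is irrelevant to this equivalence and only enters later via the determinant of $P_{K,\ell,\chi_1,\dots,\chi_n}$, and that all the $\mathcal{F}_{K,\ell,\chi_i}$ live in the same space $S_K$ — are also accurate.
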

\begin{proposition}
   Let $3\leq\ell\leq\frac{K-4}{2}$ be an integer, and $\chi_1,...,\chi_n$ be primitive Dirichlet characters mod $D$ such that $\chi_i(-1)=(-1)^{\ell}$ and $\chi_i(2)$ are pairwise distinct for $1\leq i\leq n$. Then $\{\mathcal{G}_{K,\ell,\chi_i}\}_{i=1}^n$ is linearly independent if and only if $\{r_{\ell,\chi_i}\}_{i=1}^n$ on $S_K$ is linearly independent.
\end{proposition}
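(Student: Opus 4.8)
The plan is to run the argument of Proposition~\ref{prop:linearindeperiodtomodularform2} with the varying parameter being the character $\chi_i$ rather than the index $\ell_i$. The first step is to record, for each fixed $i$, the inner‑product identity. Applying Proposition~\ref{prop:Rankin-selbergperiod2} to $\chi_i$, then rewriting $L(f,\overline{\chi_i},K-\ell-1)$ via the functional equation~\eqref{eq:functionaleqoftwistedL} as a nonzero constant times $L(f,\chi_i,\ell+1)$, hence via~\eqref{eq:periodandLvalue} as a nonzero constant times $r_{\ell,\chi_i}(f)$, one gets for every normalized Hecke eigenform $f\in S_K$
\begin{align}
\langle\mathcal{G}_{K,\ell,\chi_i},f\rangle=B_{K,\ell,\chi_i}\,L(f,K-2)\,r_{\ell,\chi_i}(f),
\end{align}
where $B_{K,\ell,\chi_i}$ is a nonzero constant depending only on $K,\ell,\chi_i$, and $L(f,K-2)\neq0$ since $K-2$ lies in the region of absolute convergence of $L(f,s)$ for $K\gg_D1$.

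The second step is the standard linear‑algebra translation using the fact that the set $\mathcal{H}_K$ of normalized Hecke eigenforms is an orthogonal basis of $S_K$. If $\{\mathcal{G}_{K,\ell,\chi_i}\}_{i=1}^n$ is linearly independent and $\sum_{i=1}^n b_i r_{\ell,\chi_i}=0$ in $S_K^{\ast}$, I evaluate at each $f_j\in\mathcal{H}_K$, divide by $L(f_j,K-2)\neq0$, and use the displayed identity to get $\sum_{i=1}^n b_i B_{K,\ell,\chi_i}^{-1}\langle\mathcal{G}_{K,\ell,\chi_i},f_j\rangle=0$ for all $j$; since the $f_j$ form a basis, $\sum_{i=1}^n b_i B_{K,\ell,\chi_i}^{-1}\mathcal{G}_{K,\ell,\chi_i}=0$ in $S_K$, and linear independence of the $\mathcal{G}_{K,\ell,\chi_i}$ forces $b_i=0$ for all $i$. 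Conversely, if $\{r_{\ell,\chi_i}\}_{i=1}^n$ is linearly independent and $\sum_{i=1}^n a_i\mathcal{G}_{K,\ell,\chi_i}=0$ in $S_K$, I pair with each $f_j\in\mathcal{H}_K$ and divide by $L(f_j,K-2)$ to get $\sum_{i=1}^n a_i B_{K,\ell,\chi_i}\,r_{\ell,\chi_i}(f_j)=0$ for all $f_j\in\mathcal{H}_K$; since $\mathcal{H}_K$ spans $S_K$ this means $\sum_{i=1}^n a_i B_{K,\ell,\chi_i}\,r_{\ell,\chi_i}=0$ in $S_K^{\ast}$, and linear independence of the $r_{\ell,\chi_i}$ forces $a_i=0$ for all $i$.

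The only substantive point — and the step I expect to be the (mild) obstacle — is verifying that $B_{K,\ell,\chi_i}\neq0$. Unwinding Proposition~\ref{prop:rankinselbergconvolution2} together with the functional equation, $B_{K,\ell,\chi_i}$ is a product of factorials, powers of $2\pi i$ and $D$, the Gauss sum $G(\chi_i)$, and the Dirichlet $L$-values $L(K-2-\ell,\overline{\chi_i})$ and $L(K-1-\ell,\overline{\chi_i})$. Here $G(\chi_i)\neq0$ because $\chi_i$ is primitive, and since $\ell\le\frac{K-4}{2}$ forces $K-2-\ell\ge\frac{K}{2}>1$ and likewise $K-1-\ell>1$ for $K\gg_D1$, both $L$-values are nonzero and finite. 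I would also note that the hypothesis that the $\chi_i(2)$ are pairwise distinct plays no role in this equivalence; it enters only afterward, in proving that the coefficient matrix $Q_{K,\ell,\chi_1,\ldots,\chi_n}$ of~\eqref{eq:matrix4} is non‑singular, which is what actually establishes the linear independence of the $r_{\ell,\chi_i}$.
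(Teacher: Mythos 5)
Your proposal is correct and is exactly the argument the paper intends: the paper omits the proof of this proposition, stating it "can be proved in the same way" as Propositions \ref{prop:linearindeperiodtomodularform} and \ref{prop:linearindeperiodtomodularform2}, and your write-up simply runs that same Rankin--Selberg-plus-linear-algebra argument with the character varying instead of the index. Your closing observations (that $B_{K,\ell,\chi_i}\neq0$ needs the nonvanishing of the Dirichlet $L$-values and the Gauss sum, and that the pairwise distinctness of the $\chi_i(2)$ is not used in this equivalence but only later for the matrix $Q_{K,\ell,\chi_1,\ldots,\chi_n}$) are both accurate.
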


%\ques{I guess that without taking the trace, $\{G_{K-\ell,\chi}(z)G_{\ell,\overline{\chi}}(z)\}$ or their cuspidal projections are linearly independent as long as $D$ is large enough (by fixing $K$).}

\section{Fourier coefficients}\label{sect:Fourier}
In this section, we compute the Fourier coefficients of $\mathcal{F}_{K,\ell,\chi}$ \eqref{eq:F-def}, $\mathcal{G}_{K,\ell,\chi}$ \eqref{eq:G-def} and study their asymptotic behaviors.  First of all, we need an explicit formula for the Eisenstein series $G_{k,\chi}$ under the action of certain matrices in $\Sl_2(\mathbb{Z})$. 
\subsection{Fourier expansion of Eisenstein series at cusps}
We need to introduce another type of Eisenstein series, $G_{k,\chi_1,\chi_2}(z)$, defined below. 
 Let $D=D_1D_2$ be such that $D_1>0$. Then there is a unique decomposition $\chi=\chi_1\chi_2$, where $\chi_1$ and $\chi_2$ are primitive Dirichlet characters mod $D_1$ and mod $D_2$, respectively. Define %\ques{Do you like $\overline{\chi}_{1}$ or $\overline{\chi}_1$? I like the second one more.}
%\ques{Did you switch the order of $\chi_1$ and $\chi_2$ below on purpose?} \answ{Yes, otherwise the Fourier coefficients don't match. }
\begin{align}
    G_{k,\chi_1,\chi_2}(z)&:=\frac{D_1^k(k-1)!}{2(-2\pi i)^k G(\overline{\chi}_{1})}E_{k}(D_1z;\chi_{2},\overline{\chi}_{1})
    \\&=\frac{D_1^k(k-1)!}{2(-2\pi i)^k G(\overline{\chi}_{1})}\chi_{2}(D_1)\sideset{}{'}\sum_{\substack{m,n\in\mathbb{Z}\\D_1\mid m}}\frac{\chi_{2}(m)\overline{\chi}_{1}(n)}{(mz+n)^k}.\label{eq:GkD1D2}
\end{align}
We would like to remark that such definition is for the consistency of notation in section \ref{subsect:computationofFourier}, although it is not necessarily consistent with those of $E_k(z;\chi,\psi)$.
From Lemma \ref{lem:EisensteinFourierexpansion} we know that
\begin{align}
    E_{k}(z;\chi_{2},\overline{\chi}_{1}):&=C+\frac{2(-2\pi i)^kG(\overline{\chi}_{1})}{D_1^k(k-1)!}\sum_{n=1}^{\infty}\left(\sum_{\substack{d_1,d_2>0\\ d_1d_2=n}}\chi_1(d_1)\chi_2(d_2)d_1^{k-1}\right)e^{2\pi inz/D_1},  
    \nonumber\\C:&=\begin{cases}
        2L(k,\overline{\chi}_{1})&D_2=1,\\0&{\rm otherwsie}.
    \end{cases}   \nonumber
\end{align}
Thus, we have 
\begin{align}
    \label{eq:gkd1d2def}
    G_{k,\chi_1,\chi_2}(z)&=\sum_{n\geq0}\sigma_{k-1,\chi_1,\chi_2}(n)q^n,\\\
    \sigma_{k-1,\chi_1,\chi_2}(n)&:=\begin{cases}0&n=0{\rm~ and}~D_2\neq1,\\
       \frac{(k-1)!D^k}{(-2\pi i)^kG(\overline{\chi})}L(k,\overline{\chi}) &n=0~{\rm and}~D_2=1,\\\sum\limits_{\substack{d_1,d_2>0\\ d_1d_2=n}}\chi_1(d_1)\chi_2(d_2)d_1^{k-1} &n\geq1.
    \end{cases}\label{eq:fouriercochi1chi2}
\end{align}

We now compute the Fourier expansion of Eisenstein series $G_{k,\chi}(z)$ under the action of certain matrices in $\Sl_2(\mathbb{Z})$. 
\begin{lemma}\label{lem:FourierexpansionofGkD} Let $D=D_1D_2$ with $D_1>0$ %\ques{No need to be fundamental discriminant} 
and $\chi$ be a primitive Dirichlet character mod $D$ such that $\chi(-1)=(-1)^k$. If $\begin{bsmallmatrix}
    a&b\\c&d
\end{bsmallmatrix}$ is a matrix in $\Sl_2(\mathbb{Z})$ such that $\gcd(c,D)=D_1$ then
\begin{align}
    \left(G_{k,\chi}\bigg|_k\begin{bmatrix}
        a & b\\ c& d\end{bmatrix}\right)(z)=\chi_2(c)\chi_1(d)\overline{\chi}_{1}(D_2)\overline{\chi}_{2}(D_1)\frac{G(\overline{\chi}_{1})}{G(\overline{\chi})}G_{k,\chi_1,\overline{\chi}_{2}}\left(\frac{z+c^{\ast}d}{D_2}\right),   \nonumber
\end{align}
where $\chi=\chi_1\chi_2$ is the product of primitive characters $\chi_1$ mod $D_1$ and $\chi_2$ mod $D_2$, and $c^{\ast}$ is an integer with $cc^{\ast}\equiv 1\pmod{D_2}$ and $D_1\mid c^{\ast}$. %\ques{Do we need to require $c^{\ast}$ to be divisible by $D_1$?}
\end{lemma}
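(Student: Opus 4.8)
The plan is to argue directly from the lattice-sum representation \eqref{eq:GkDforcomputingFourier} of $G_{k,\chi}$. Writing $\gamma=\begin{bsmallmatrix}a&b\\c&d\end{bsmallmatrix}$ and renaming the summation indices $(m,n)$ to avoid a clash with the matrix entries, we have
\[
G_{k,\chi}(z)=\frac{(k-1)!D^k}{2(-2\pi i)^kG(\overline{\chi})}\sideset{}{'}\sum_{\substack{m,n\in\mathbb{Z}\\ D\mid m}}\frac{\overline{\chi}(n)}{(mz+n)^k}.
\]
Since $\det\gamma=1$, the slash operator \eqref{eq:slashoperator} gives $\bigl(G_{k,\chi}\big|_k\gamma\bigr)(z)=(cz+d)^{-k}G_{k,\chi}\bigl(\tfrac{az+b}{cz+d}\bigr)$, and clearing the common denominator $cz+d$ inside the sum rewrites $m\cdot\tfrac{az+b}{cz+d}+n$ as $\tfrac{(ma+nc)z+(mb+nd)}{cz+d}$; the factor $(cz+d)^k$ so produced cancels the $(cz+d)^{-k}$ out front. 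Hence applying the slash operator amounts to replacing $(m,n)$ by $(m,n)\gamma=(ma+nc,\,mb+nd)$ in the lattice sum.

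Next I would re-index by $(m',n')=(m,n)\gamma$, so that $(m,n)=(m',n')\gamma^{-1}=(m'd-n'c,\,-m'b+n'a)$ and $(m',n')$ again runs over $\mathbb{Z}^2\setminus\{(0,0)\}$. The condition $D\mid m$ becomes $D\mid m'd-n'c$, which I split via the Chinese Remainder Theorem using $D=D_1D_2$ with $\gcd(D_1,D_2)=1$: since $\gcd(c,D)=D_1$ forces $D_1\mid c$ and $\gcd(c,D_2)=1$, while $\gcd(c,d)=1$ forces $\gcd(d,D_1)=1$, the condition is equivalent to $D_1\mid m'$ together with $n'\equiv m'dc^{\ast}\pmod{D_2}$. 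Writing $\overline{\chi}=\overline{\chi}_1\overline{\chi}_2$ and evaluating $\overline{\chi}(-m'b+n'a)$ modulo $D_1$ and modulo $D_2$ separately — using $ad\equiv1\pmod{D_1}$ and $D_1\mid m'$ for the first factor, and the congruence on $n'$ together with $cc^{\ast}\equiv1\pmod{D_2}$ for the second — the character value factors as $\chi_1(d)\chi_2(c)\,\overline{\chi}_1(n')\,\overline{\chi}_2(m')$.

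At this stage the transformed expression equals $\dfrac{(k-1)!D^k}{2(-2\pi i)^kG(\overline{\chi})}\chi_1(d)\chi_2(c)$ times $\sideset{}{'}\sum\dfrac{\overline{\chi}_2(m')\overline{\chi}_1(n')}{(m'z+n')^k}$, the sum being over pairs with $D_1\mid m'$ and $n'\equiv m'dc^{\ast}\pmod{D_2}$. To recognize this as a value of $G_{k,\chi_1,\overline{\chi}_2}$, I would eliminate the congruence by the substitution $n'=m'c^{\ast}d+D_2n$ (here $D_1\mid c^{\ast}$ makes the subsequent reduction of $\overline{\chi}_1$ modulo $D_1$ clean); this rewrites $m'z+n'$ as $D_2\bigl(m'\cdot\tfrac{z+c^{\ast}d}{D_2}+n\bigr)$, produces a factor $D_2^{k}$, and matches the defining lattice sum \eqref{eq:GkD1D2}--\eqref{eq:gkd1d2def} of $G_{k,\chi_1,\overline{\chi}_2}$ evaluated at $\tfrac{z+c^{\ast}d}{D_2}$. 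Collecting the prefactors — the ratio of $\tfrac{(k-1)!D^k}{G(\overline{\chi})}$ to $\tfrac{(k-1)!D_1^k}{G(\overline{\chi}_1)}$, the factor $D_2^k$ with $D_1^kD_2^k=D^k$, and the character values at $D_1$ and $D_2$ picked up along the way — then yields the stated identity.

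The main obstacle is the bookkeeping of the many character and Gauss-sum factors: one must carry $\chi_i(a),\chi_i(d),\chi_i(c),\chi_i(D_1),\chi_i(D_2)$ and the ratio $G(\overline{\chi}_1)/G(\overline{\chi})$ through the two reindexings so that everything lands exactly as claimed — in particular verifying $\overline{\chi}_1(a)=\chi_1(d)$ from $ad\equiv1\pmod{D_1}$ and $\overline{\chi}_2(c^{\ast})=\chi_2(c)$ from $cc^{\ast}\equiv1\pmod{D_2}$, and being careful with the inverse/conjugate conventions for the $\chi_i(D_j)$. A secondary point is to confirm that $(m,n)\mapsto(m,n)\gamma$ is a bijection of $\mathbb{Z}^2\setminus\{(0,0)\}$ and that $n'=m'c^{\ast}d+D_2n$ gives a bijection from $\{D_1\mid m'\}\times\mathbb{Z}$ onto the constrained index set, so that no terms are lost or double-counted.
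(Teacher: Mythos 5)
Your proposal is correct and follows essentially the same route as the paper's proof: re-index the lattice sum by $\gamma$, use the Chinese Remainder Theorem to split the divisibility condition into $D_1\mid m'$ plus a congruence $n'\equiv m'dc^{\ast}\pmod{D_2}$, factor the character value as $\chi_1(d)\chi_2(c)\overline{\chi}_1(n')\overline{\chi}_2(m')$, and substitute $n'=m'c^{\ast}d+D_2n$ to recognize $G_{k,\chi_1,\overline{\chi}_2}\left(\frac{z+c^{\ast}d}{D_2}\right)$ via \eqref{eq:GkD1D2}. The bookkeeping steps you flag (e.g.\ $\overline{\chi}_1(a)=\chi_1(d)$ and $\overline{\chi}_2(c^{\ast})=\chi_2(c)$) all go through exactly as you describe.
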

\begin{proof}
     We follow the idea in \cite[pp.~273-275]{GZ-86}.   By equation \eqref{eq:GkDforcomputingFourier}, we have %\ques{We probably need to justify the second equality}
    \begin{align}
    \left(\frac{2(-2\pi i)^kG(\overline{\chi})}{(k-1)!D^k}G_{k,\chi}\bigg|_k\begin{bmatrix}
        a&b\\ c&d\end{bmatrix}\right)(z)&=\sideset{}{'}\sum_{\substack{l,r\in\mathbb{Z}\\D\mid l}}\frac{\overline{\chi}(r)}{(l(az+b)+r(cz+d))^k}   \nonumber\\&=\sideset{}{'}\sum_{\substack{l,r\in\mathbb{Z}\\D\mid l}}\frac{\overline{\chi}(r)}{((al+cr)z+bl+dr)^k}   \nonumber\\&=\sideset{}{'}\sum_{\substack{m,n\in\mathbb{Z} \\md\equiv nc~\text{mod}~D}}\frac{\overline{\chi}(an-bm)}{(mz+n)^k},   \nonumber
    \end{align}
    where $(m,n)=(l,r)\begin{bsmallmatrix}
        a&b\\ c&d\end{bsmallmatrix}$ and thus $r=an-bm$. Since $md\equiv nc\pmod{D}$, we have 
        \begin{align}
            d(an-bm)&=adn-bmd\equiv adn-bcn\equiv n\pmod{D},\label{eq:dealwithchiD1}\\ c(an-bm)&=anc-bcm\equiv adm-bcm\equiv m\pmod{D}.\label{eq:dealwithchiD2}
        \end{align}
Note also that $\gcd(D_1,D_2)=1$. Then \eqref{eq:dealwithchiD1} and \eqref{eq:dealwithchiD2} imply that
 \begin{align}
     \overline{\chi}(an-bm)&=\overline{\chi}_{1}(an-bm)\overline{\chi}_{2}(an-bm)   \nonumber\\&=\chi_{1}(d)\overline{\chi}_{1}(n)\chi_{2}(c)\overline{\chi}_{2}(m).   \nonumber
 \end{align}
 Since $D_1,D_2\mid(md-nc)$, $(d,D_1)=1, (c,D_2)=1$ and $(c,D)=D_1$, we must have $D_1\mid m$; and $n\equiv c^{\ast}md\pmod{D_2}$. Replacing $n$ by $n=c^{\ast}md+lD_2$, and choosing  $c^{\ast}$ to satisfy $D_1\mid c^{\ast}$ by the Chinese Remainder Theorem, so that $\overline{\chi}_1(c^{\ast}md+lD_2)=\overline{\chi}_1(l)\overline{\chi}_1(D_2)$. It follows that
 \begin{align}
 &\left(\frac{2(-2\pi i)^kG(\overline{\chi})}{(k-1)!D^k}G_{k,\chi}\bigg|_k\begin{bmatrix}
        a&b\\ c&d\end{bmatrix}\right)(z)   \nonumber\\=&\sideset{}{'}\sum_{\substack{m,l\in\mathbb{Z}\\ D_1\mid m}}\frac{\chi_{1}(d)\overline{\chi}_{1}(c^{\ast}md+lD_2)\chi_{2}(c)\overline{\chi}_{2}(m)}{(mz+mc^{\ast}d+lD_2)^k}   \nonumber\\=&\chi_{2}(c)\chi_{1}(d)\overline{\chi}_{1}(D_2)\sideset{}{'}\sum_{\substack{m,l\in\mathbb{Z}\\ D_1\mid m}}\frac{\overline{\chi}_{2}(m)\overline{\chi}_{1}(l)}{(mz+mc^{\ast}d+lD_2)^k}   \nonumber\\=&\chi_{2}(c)\chi_{1}(d)\overline{\chi}_{1}(D_2)D_2^{-k}\sideset{}{'}\sum_{\substack{m,l\in\mathbb{Z}\\ D_1\mid m}}\frac{\overline{\chi}_{2}(m)\overline{\chi}_{1}(l)}{\left(m\frac{z+c^{\ast}d}{D_2}+l\right)^k}.\label{eq:middle1}
 \end{align}
Note that \eqref{eq:GkD1D2} implies that 
\begin{align}
    \sideset{}{'}\sum_{\substack{m,l\in\mathbb{Z}\\ D_1\mid m}}\frac{\overline{\chi}_{2}(m)\overline{\chi}_{1}(l)}{\left(m\frac{z+c^{\ast}d}{D_2}+l\right)^k}=\frac{2(-2\pi i)^kG(\overline{\chi}_{1})}{D_1^k(k-1)!}\overline{\chi}_{2}(D_1)G_{k,\chi_1,\overline{\chi}_{2}}\left(\frac{z+c^{\ast}d}{D_2}\right).\label{eq:middle2}
\end{align} 
Plugging \eqref{eq:middle2} into \eqref{eq:middle1} gives the desired result.
\end{proof}
\subsection{Computation of the trace}\label{subsect:computationofFourier}
For $m\geq1$ and $f(z)=\sum_{n\geq0}a_f(n)q^n\in S_{k}(N,\chi)$ we define the $U$-operator
\begin{align}
    U_mf(z)=\frac{1}{m}\sum_{v~{\rm mod}~m}f\left(\frac{z+v}{m}\right)=\sum_{n\geq1}a_f(mn)q^n.\label{eq:defofUmap}
\end{align}
Equivalently, we may write 
\begin{align}U_mf(z)=m^{k/2-1}\sum_{v\text{ mod }m}f(z)\bigg|_k\begin{bmatrix}
    1 & v\\ 0& m
\end{bmatrix}.\label{eq:defofUslash}\end{align} 
We are ready to compute the Fourier coefficients of $\mathcal{F}_{K,\ell,\chi}(z)$ \eqref{eq:F-def}. 
\begin{proposition}\label{prop:Trislifting}
    Let $3\leq \ell\leq \frac{K-2}{2}$ be integers and  $\chi$ be primitive character mod $D$ such that $\chi(-1)=(-1)^{\ell}$. Then
    \begin{align}
        \Tr^D_1(G_{\ell,\chi}(z)G_{K-\ell,\overline{\chi}}(z))=\sum_{D=D_1D_2}\overline{\chi}_2(-1)U_{D_2}(G_{\ell,\chi_1,\overline{\chi}_{2}}(z)G_{K-\ell,\overline{\chi}_{1},\chi_2}(z)),   \nonumber
    \end{align}
    where the summation is over all decompositions of $D=D_1D_2$ as a product of two positive integers and $\chi=\chi_1\chi_2$ is the product of primitive characters $\chi_1$ mod $D_1$ and $\chi_2$ mod $D_2$.
\end{proposition}
\begin{proof}
Let $k=K-\ell$. We consider the following system of representatives  of $\Gamma_0(D)\backslash \Sl_{2}(\mathbb{Z})$ (see \cite[p.~276]{GZ-86} and \cite[Lemma 3.1]{kayath2024subspacesspannedeigenformsnonvanishing}):
    \begin{equation*}
        \left\{\begin{bmatrix}
            1 & 0 \\ D_1 & 1
        \end{bmatrix}\begin{bmatrix}
            1 & \mu \\ 0 & 1
        \end{bmatrix}~:~ D=D_1D_2,~\mu \text{ mod }D_2\right\}.
    \end{equation*}
 By Lemma \ref{lem:FourierexpansionofGkD}, we have 
    \begin{align}
    G_{\ell,\chi}(z)\bigg|_{k}\begin{bmatrix}
            1 & 0 \\ D_1 & 1
        \end{bmatrix}\begin{bmatrix}
            1 & \mu \\ 0 & 1
        \end{bmatrix} &=\chi_2(D_1)\overline{\chi}_{1}(D_2)\overline{\chi}_{2}(D_1)\frac{G(\overline{\chi}_{1})}{G(\overline{\chi})}G_{\ell,\chi_1,\overline{\chi}_{2}}\left(\frac{z+\mu+D_1^*}{D_2}\right),   \nonumber\\
       G_{k,\overline{\chi}}(z)\bigg|_{k}\begin{bmatrix}
            1 & 0 \\ D_1 & 1
        \end{bmatrix}\begin{bmatrix}
            1 & \mu \\ 0 & 1
        \end{bmatrix} &=\overline{\chi}_{2}(D_1)\chi_1(D_2)\chi_2(D_1)\frac{G(\chi_{1})}{G(\chi)}G_{k,\overline{\chi}_{1},\chi_2}\left(\frac{z+\mu+D_1^*}{D_2}\right),   \nonumber
    \end{align}
where $D_1^{\ast}D_1\equiv1\pmod {D_2}$ and $D_1\mid D_1^{\ast}$. 
It follows that 
    \begin{align*}\Tr_1^D(G_{\ell,\chi}(z)G_{k,\overline{\chi}}(z)) 
        =&\sum_{D_1D_2=D}\sum_{\mu \text{ mod } D_2} G_{\ell,\chi}(z)G_{k,\overline{\chi}}(z)\bigg|_{K}\begin{bmatrix}
            1 & 0 \\ D_1 & 1
        \end{bmatrix}\begin{bmatrix}
            1 & \mu \\ 0 & 1
        \end{bmatrix}    \nonumber\\
        =&\sum_{D_1D_2=D}\sum_{\mu \text{ mod } D_2}G_{\ell,\chi}(z)\bigg|_{\ell}\begin{bmatrix}
            1 & 0 \\ D_1 & 1
\end{bmatrix}\begin{bmatrix}
            1 & \mu \\ 0 & 1
        \end{bmatrix}\cdot G_{k,\overline{\chi}}(z)\bigg|_{k}\begin{bmatrix}
            1 & 0 \\ D_1 & 1
        \end{bmatrix}\begin{bmatrix}
            1 & \mu \\ 0 & 1
        \end{bmatrix}   \nonumber\\
        =&\sum_{D_1D_2=D}\sum_{\mu \text{ mod } D_2}\overline{\chi}_2(-1)D_2^{-1}G_{\ell,\chi_1,\overline{\chi}_{2}}\left(\frac{z+\mu+D_1^*}{D_2}\right)G_{k,\overline{\chi}_{1},\chi_2}\left(\frac{z+\mu+D_1^*}{D_2}\right),   \nonumber
    \end{align*}
where we used the fact
\begin{align*}G(\chi_{1})G(\overline{\chi}_{1})=\chi_1(-1)D_1\quad {\rm and}\quad G(\chi)G(\overline{\chi})=\chi(-1)D\end{align*} in the last equality (see e.g \cite[Corollary 2.1.47 on p.~33]{Cohenbook}). On the other hand,
    \begin{align*}
U_{D_2}\left(G_{\ell,\chi_1,\overline{\chi}_{2}}(z)G_{k,\overline{\chi}_{1},\chi_2}(z)\right)
        =&\sum_{v~\text{mod}~D_2}D_2^{K/2-1}G_{\ell,\chi_1,\overline{\chi}_{2}}(z)G_{k,\overline{\chi}_{1},\chi_2}(z)\bigg|_{K}\begin{bmatrix}
            1 & v\\ 0 & D_2
        \end{bmatrix}\\=&\sum_{v~\text{mod}~D_2}D_2^{K/2-1}G_{\ell,\chi_1,\overline{\chi}_{2}}(z)\bigg|_{\ell}\begin{bmatrix}
            1 & v\\ 0 & D_2
\end{bmatrix}\cdot G_{k,\overline{\chi}_{1},\chi_2}(z)\bigg|_k\begin{bmatrix}1 & v\\ 0 & D_2
        \end{bmatrix}\\=&\sum_{v~\text{mod}~D_2}D_2^{K/2-1}D_2^{-\ell/2}G_{\ell,\chi_1,\overline{\chi}_{2}}\left(\frac{z+v}{D_2}\right) D_2^{-k/2}G_{k,\overline{\chi}_{1},\chi_2}\left(\frac{z+v}{D_2}\right)\\=&\sum_{v~\text{mod}~D_2}D_2^{-1}G_{\ell,\chi_1,\overline{\chi}_{2}}\left(\frac{z+v}{D_2}\right)G_{k,\overline{\chi}_{1},\chi_2}\left(\frac{z+v}{D_2}\right).
    \end{align*}
    It follows that 
    \begin{align*}
       &\quad \,\sum_{D=D_1D_2}\overline{\chi}_2(-1)U_{D_2}\left(G_{\ell,\chi_1,\overline{\chi}_{2}}(z)G_{k,\overline{\chi}_{1},\chi_2}(z)\right)\\&=\sum_{D=D_1D_2}\sum_{v~\text{mod}~D_2}\overline{\chi}_2(-1)D_2^{-1}G_{\ell,\chi_1,\overline{\chi}_{2}}\left(\frac{z+v}{D_2}\right)G_{k,\overline{\chi}_{1},\chi_2}\left(\frac{z+v}{D_2}\right),
    \end{align*}
    as desired.
\end{proof}
\begin{proposition}\label{prop:Fourco}
   Let $3\leq \ell\leq\frac{K-2}{2}$ be an integer 
   %\ques{This range for $\ell$ seems too wide} 
   and  $\chi$ be a primitive character mod $D$ such that $\chi(-1)=(-1)^{\ell}$. Then the $q^n$-Fourier coefficient of $\Tr^D_1(G_{\ell,\chi}(z)G_{K-\ell,\overline{\chi}}(z)) $ is 
    \begin{align}
\sum_{D=D_1D_2}\overline{\chi}_2(-1)\sum_{\substack{a_1,a_2\geq0\\a_1+a_2=nD_2}}\sigma_{\ell-1,\chi_1,\overline{\chi}_{2}}(a_1)\sigma_{k-1,\overline{\chi}_{1},\chi_2}(a_2),\label{eq:fouriercoTr}
    \end{align}
    where $k=K-\ell$,  the summation is over all decompositions of $D=D_1D_2$ as a product of two positive integers and $\chi=\chi_1\chi_2$ is the product of primitive characters $\chi_1$ mod $D_1$ and $\chi_2$ mod $D_2$.
\end{proposition}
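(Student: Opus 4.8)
The plan is to feed the trace identity of Proposition \ref{prop:Trislifting} into the known $q$-expansions and to track coefficients. By that proposition,
$\Tr^D_1(G_{\ell,\chi}(z)G_{K-\ell,\overline{\chi}}(z))=\sum_{D=D_1D_2}\overline{\chi}_2(-1)\,U_{D_2}\bigl(G_{\ell,\chi_1,\overline{\chi}_{2}}(z)G_{K-\ell,\overline{\chi}_{1},\chi_2}(z)\bigr)$, so it suffices to compute, for each factorization $D=D_1D_2$ into positive integers, the $q^n$-coefficient of $U_{D_2}\bigl(G_{\ell,\chi_1,\overline{\chi}_{2}}\,G_{K-\ell,\overline{\chi}_{1},\chi_2}\bigr)$, and then to reassemble the weighted sum over factorizations.

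First I would form the product of the two Eisenstein series using their Fourier expansions \eqref{eq:gkd1d2def}: writing $G_{\ell,\chi_1,\overline{\chi}_{2}}(z)=\sum_{a_1\geq0}\sigma_{\ell-1,\chi_1,\overline{\chi}_{2}}(a_1)q^{a_1}$ and $G_{K-\ell,\overline{\chi}_{1},\chi_2}(z)=\sum_{a_2\geq0}\sigma_{K-\ell-1,\overline{\chi}_{1},\chi_2}(a_2)q^{a_2}$, the Cauchy product gives $G_{\ell,\chi_1,\overline{\chi}_{2}}(z)G_{K-\ell,\overline{\chi}_{1},\chi_2}(z)=\sum_{m\geq0}\bigl(\sum_{a_1+a_2=m}\sigma_{\ell-1,\chi_1,\overline{\chi}_{2}}(a_1)\sigma_{K-\ell-1,\overline{\chi}_{1},\chi_2}(a_2)\bigr)q^{m}$, where the inner sum runs over $a_1,a_2\geq0$ (so the constant terms of the two factors are included). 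Next I would apply the description \eqref{eq:defofUmap} of the $U_{D_2}$-operator on $q$-expansions, namely that it sends $\sum_m c_m q^m$ to $\sum_n c_{D_2 n}q^n$; hence the $q^n$-coefficient of $U_{D_2}\bigl(G_{\ell,\chi_1,\overline{\chi}_{2}}\,G_{K-\ell,\overline{\chi}_{1},\chi_2}\bigr)$ equals $\sum_{a_1+a_2=nD_2}\sigma_{\ell-1,\chi_1,\overline{\chi}_{2}}(a_1)\sigma_{K-\ell-1,\overline{\chi}_{1},\chi_2}(a_2)$.

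Finally, summing these over all decompositions $D=D_1D_2$ with the weight $\overline{\chi}_2(-1)$ dictated by Proposition \ref{prop:Trislifting}, and abbreviating $k=K-\ell$, yields exactly \eqref{eq:fouriercoTr}. I do not expect a genuine obstacle here: the substantive work — the choice of coset representatives for $\Gamma_0(D)\backslash\SL_2(\mathbb{Z})$ and the transformation formula for $G_{k,\chi}$ under the matrices $\begin{bsmallmatrix}1&0\\ D_1&1\end{bsmallmatrix}\begin{bsmallmatrix}1&\mu\\0&1\end{bsmallmatrix}$ — was already carried out in Lemma \ref{lem:FourierexpansionofGkD} and Proposition \ref{prop:Trislifting}. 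The only points demanding a little care are reconciling the two expressions \eqref{eq:defofUmap} and \eqref{eq:defofUslash} for $U_m$ (so that the normalizing powers of $D_2$ match, as was checked inside the proof of Proposition \ref{prop:Trislifting}) and making sure the $a_i=0$ terms of the convolution are retained, since the Eisenstein series $G_{k,\chi_1,\chi_2}$ do carry nonzero constant terms precisely when $D_2=1$ in the relevant factor.
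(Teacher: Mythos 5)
Your proposal is correct and is essentially the paper's own proof: both arguments invoke Proposition \ref{prop:Trislifting}, observe that the $q^n$-coefficient of $U_{D_2}$ applied to the product is the $q^{nD_2}$-coefficient of the product itself, and expand that coefficient as the convolution sum over $a_1+a_2=nD_2$. The extra care you flag (retaining the $a_i=0$ terms, reconciling the two descriptions of $U_m$) is consistent with what the paper already handles.
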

\begin{proof}
Note that the $q^n$-Fourier coefficient of $U_{D_2}\left(G_{\ell,\chi_1,\overline{\chi}_{2}}(z)G_{K-\ell,\overline{\chi}_{1},\chi_2}(z)\right)$ is the $q^{nD_2}$-Fourier coefficient of $G_{\ell,\chi_1,\overline{\chi}_{2}}(z)G_{K-\ell,\overline{\chi}_{1},\chi_2}(z)$, which is 
    \begin{align*}
\sum_{\substack{a_1,a_2\geq0\\a_1+a_2=nD_2}}\sigma_{\ell-1,\chi_1,\overline{\chi}_{2}}(a_1)\sigma_{k-1,\overline{\chi}_{1},\chi_2}(a_2).
    \end{align*}
Hence \eqref{eq:fouriercoTr} follows from Proposition \ref{prop:Trislifting}.
\end{proof}
\begin{corollary}\label{co:cuspproj}
   Let $3\leq \ell\leq\frac{K-2}{2}$ be an integer 
   %\ques{This range for $\ell$ seems too wide} 
   and  $\chi$ be a primitive character mod $D$ such that $\chi(-1)=(-1)^{\ell}$. Then $\mathcal{F}_{K,\ell,\chi}$ is a cusp form of weight $K$ and level one.
\end{corollary}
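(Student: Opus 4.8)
The plan is to verify that $\mathcal{F}_{K,\ell,\chi}$ is a modular form of weight $K$ and level one whose constant Fourier coefficient vanishes. First I would record that $\mathcal{F}_{K,\ell,\chi}\in M_K$: by \eqref{eq:Fourier-expan-G} we have $G_{\ell,\chi}\in M_\ell(D,\chi)$ and $G_{K-\ell,\overline\chi}\in M_{K-\ell}(D,\overline\chi)$, and since $\chi\overline\chi$ is the principal character mod $D$, the product $G_{\ell,\chi}G_{K-\ell,\overline\chi}$ lies in $M_K(D)$; hence $\Tr_1^D(G_{\ell,\chi}G_{K-\ell,\overline\chi})\in M_K(1)=M_K$ by \eqref{eq:tracemap}, and since $G_K\in M_K$ and $\zeta(1-K)=-B_K/K\neq 0$ (here $K$ is even, indeed $K\geq 8$, because $3\leq\ell\leq\frac{K-2}{2}$), the combination defining $\mathcal{F}_{K,\ell,\chi}$ in \eqref{eq:F-def} lies in $M_K$. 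Since $K\geq 8$ is even, $M_K=S_K\oplus\mathbb{C}G_K$ and $G_K$ has nonzero constant term $\zeta(1-K)/2$; thus it suffices to show that the constant term of $\mathcal{F}_{K,\ell,\chi}$ is $0$.

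To compute the constant term of the trace, I would specialize Proposition \ref{prop:Fourco} to $n=0$, where the inner sum collapses to $a_1=a_2=0$, so that the constant term of $\Tr_1^D(G_{\ell,\chi}G_{K-\ell,\overline\chi})$ equals
\[
\sum_{D=D_1D_2}\overline\chi_2(-1)\,\sigma_{\ell-1,\chi_1,\overline\chi_2}(0)\,\sigma_{K-\ell-1,\overline\chi_1,\chi_2}(0),
\]
where $\chi=\chi_1\chi_2$ is the factorization into primitive characters mod $D_1$ and mod $D_2$. By \eqref{eq:fouriercochi1chi2}, a constant term of the form $\sigma_{m,\alpha,\beta}(0)$ vanishes unless the second character $\beta$ has modulus $1$; the factor $\sigma_{\ell-1,\chi_1,\overline\chi_2}(0)$ forces $D_2=1$, and so does $\sigma_{K-\ell-1,\overline\chi_1,\chi_2}(0)$, so the only surviving summand is $D_1=D$, $D_2=1$, $\chi_1=\chi$, $\chi_2=\mathbbm{1}$. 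For it $\overline\chi_2(-1)=1$, and comparing the definitions \eqref{eq:GkDforcomputingFourier} and \eqref{eq:GkD1D2} (with $\chi_2=\mathbbm{1}$) shows $G_{\ell,\chi,\mathbbm{1}}=G_{\ell,\chi}$ and $G_{K-\ell,\overline\chi,\mathbbm{1}}=G_{K-\ell,\overline\chi}$, hence $\sigma_{\ell-1,\chi,\mathbbm{1}}(0)=\sigma_{\ell-1,\chi}(0)$ and $\sigma_{K-\ell-1,\overline\chi,\mathbbm{1}}(0)=\sigma_{K-\ell-1,\overline\chi}(0)$ by \eqref{eq:coefEisenstein}. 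Therefore the constant term of the trace is $\sigma_{\ell-1,\chi}(0)\,\sigma_{K-\ell-1,\overline\chi}(0)$.

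Finally I would compare with the Eisenstein correction in \eqref{eq:F-def}: since the constant term of $G_K$ is $\zeta(1-K)/2$, the subtracted term contributes the constant term $\frac{2\sigma_{\ell-1,\chi}(0)\sigma_{K-\ell-1,\overline\chi}(0)}{\zeta(1-K)}\cdot\frac{\zeta(1-K)}{2}=\sigma_{\ell-1,\chi}(0)\sigma_{K-\ell-1,\overline\chi}(0)$, which exactly cancels the constant term of the trace computed above; hence $\mathcal{F}_{K,\ell,\chi}$ has vanishing constant term and lies in $S_K$. I do not expect a genuine obstacle here: the argument is a bookkeeping computation of constant terms, and the only point requiring care is reading off from \eqref{eq:fouriercochi1chi2} exactly which factorizations $D=D_1D_2$ contribute a nonzero constant term — equivalently, recognizing that the correction term in \eqref{eq:F-def} is precisely the Eisenstein component of $\Tr_1^D(G_{\ell,\chi}G_{K-\ell,\overline\chi})$ — together with keeping the normalizations straight so that $\sigma_{\ell-1,\chi,\mathbbm{1}}(0)=\sigma_{\ell-1,\chi}(0)$.
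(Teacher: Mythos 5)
Your proposal is correct and follows essentially the same route as the paper: apply Proposition \ref{prop:Fourco} at $n=0$, observe via \eqref{eq:fouriercochi1chi2} that only the decomposition $D_2=1$ contributes so the constant term of the trace is $\sigma_{\ell-1,\chi}(0)\sigma_{K-\ell-1,\overline{\chi}}(0)$, and note that the subtracted multiple of $G_K$ has exactly this constant term. The only difference is that you also spell out the (implicitly assumed) fact that the trace lands in $M_K$ and that $M_K=S_K\oplus\mathbb{C}G_K$, which is a harmless addition.
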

\begin{proof}Let $k=K-\ell$.
    By Proposition \ref{prop:Fourco}, the constant term of  $\Tr^D_1(G_{\ell,\chi}(z)G_{K-\ell,\overline{\chi}}(z)) $  is 
\begin{align*}\sum_{D=D_1D_2}\chi_2(-1)^{-1}\sigma_{\ell-1,\chi_1,\overline{\chi}_{2}}(0)\sigma_{k-1,\overline{\chi}_{1},\chi_2}(0)=\sigma_{\ell-1,\chi,\mathbbm{1}}(0)\sigma_{k-1,\overline{\chi},\mathbbm{1}}(0).\end{align*}
Recall that the  Eisenstein series in $M_K$ is given by
$$G_K(z)=\frac{\zeta(1-K)}{2}+\sum_{n\geq1}\sigma_{K-1}(n)q^n.$$ 
It follows from its definition \eqref{eq:F-def} that
 \begin{align*}    \mathcal{F}_{K,\ell,\chi}(z)=\Tr_1^D(G_{\ell,\chi}(z)G_{K-\ell,\overline{\chi}}(z))-\frac{2\sigma_{\ell-1,\chi}(0)\sigma_{k-1,\overline{\chi}}(0)}{\zeta(1-K)}G_K(z)\in S_K,
 \end{align*}
 as desired.
\end{proof}
\begin{remark}\label{remark:traceofprodwhenl=k}
    Note that Propositions \ref{prop:Trislifting}, \ref{prop:Fourco} and Corollary \ref{co:cuspproj} also hold true for $\ell=\frac{K}{2}$ since the computation of the trace  is valid as long as $3\leq\ell\leq K-3$. See Kohnen-Zagier \cite[p. 193]{Kohnen-Zagier1981} for the case when $\chi$ is quadratic and $\ell=\frac{K}{2}$.
\end{remark}
We now compute the Fourier coefficients of $\mathcal{G}_{K,\ell,\chi}$, see \eqref{eq:G-def} for the definition.
\begin{proposition}\label{prop:TrisliftingRankinCohen}
     Let $3\leq \ell\leq\frac{K-4}{2}$ be an integer 
     and  $\chi$ be a primitive character mod $D$ such that $\chi(-1)=(-1)^{\ell}$. Then
    \begin{align*}
    \Tr^D_1([G_{\ell,\chi}(z),G_{K-2-\ell,\overline{\chi}}(z)]_1)=\sum_{D=D_1D_2}\overline{\chi}_2(-1)D_2^{-1}U_{D_2}\left([G_{\ell,\chi_1,\overline{\chi}_{2}}(z),G_{K-2-\ell,\overline{\chi}_{1},\chi_2}(z)]_1\right),
    \end{align*}
    where the summation is over all decompositions of $D$ as a product of two positive integers and $\chi=\chi_1\chi_2$ is the product of primitive characters $\chi_1$ mod $D_1$ and $\chi_2$ mod $D_2$.
\end{proposition}
\begin{proof}
Let $k=K-2-\ell$. Then  $\Tr^D_1([G_{\ell,\chi}(z),G_{K-2-\ell,\overline{\chi}}(z)]_1)=$    
    \begin{align*}
        =&\sum_{D_1D_2=D}\sum_{\mu \text{ mod } D_2} \left[G_{\ell,\chi}(z),G_{k,\overline{\chi}}(z)\right]_1\bigg|_{K}\begin{bmatrix}
            1 & 0 \\ D_1 & 1
        \end{bmatrix}\begin{bmatrix}
            1 & \mu \\ 0 & 1
        \end{bmatrix} \\
        =&\sum_{D_1D_2=D}\sum_{\mu \text{ mod } D_2}\left[G_{\ell,\chi}(z)\bigg|_{\ell}\begin{bmatrix}
            1 & 0 \\ D_1 & 1
\end{bmatrix}\begin{bmatrix}
            1 & \mu \\ 0 & 1
        \end{bmatrix},G_{k,\overline{\chi}}(z)\bigg|_{k}\begin{bmatrix}
            1 & 0 \\ D_1 & 1
        \end{bmatrix}\begin{bmatrix}
            1 & \mu \\ 0 & 1
        \end{bmatrix}\right]_1 \\
        =&\sum_{D_1D_2=D}\sum_{\mu \text{ mod } D_2}\overline{\chi}_2(-1)D_2^{-1}\left[G_{\ell,\chi_1,\overline{\chi}_{2}}\left(\frac{z+\mu+D_1^*}{D_2}\right),G_{k,\overline{\chi}_{1},\chi_2}\left(\frac{z+\mu+D_1^*}{D_2}\right)\right]_1,
    \end{align*}
    where $D_1^{\ast}D_1\equiv1\pmod{D_2}$ and $D_1\mid D_1^{\ast}$.
Note also that $U_{D_2}\left([G_{\ell,\chi_1,\overline{\chi}_{2}}(z),G_{k,\overline{\chi}_{1},\chi_2}(z)]_1\right)=  $
     \begin{align*}
     &\sum_{v~\text{mod}~D_2}D_2^{K/2-1}[G_{\ell,\chi_1,\overline{\chi}_{2}}(z),G_{k,\overline{\chi}_{1},\chi_2}(z)]_1\bigg|_{K}\begin{bmatrix}
            1 & v\\ 0 & D_2
    \end{bmatrix}\\=&\sum_{v~\text{mod}~D_2}D_2^{K/2-1}\left[G_{\ell,\chi_1,\overline{\chi}_{2}}(z)\bigg|_{\ell}\begin{bmatrix}
            1 & v\\ 0 & D_2
\end{bmatrix},G_{k,\overline{\chi}_{1},\chi_2}(z)\bigg|_k\begin{bmatrix}
            1 & v\\ 0 & D_2
\end{bmatrix}\right]_1\\=&\sum_{v~\text{mod}~D_2}D_2^{K/2-1}\left[D_2^{-\ell/2}G_{\ell,\chi_1,\overline{\chi}_{2}}\left(\frac{z+v}{D_2}\right),D_2^{-k/2}G_{k,\overline{\chi}_{1},\chi_2}\left(\frac{z+v}{D_2}\right)\right]_1\\=&\sum_{v~\text{mod}~D_2}\left[G_{\ell,\chi_1,\overline{\chi}_{2}}\left(\frac{z+v}{D_2}\right),G_{k,\overline{\chi}_{1},\chi_2}\left(\frac{z+v}{D_2}\right)\right]_1.
\end{align*}
It follows that 
\begin{align*}
&\sum_{D=D_1D_2}\overline{\chi}_2(-1)D_2^{-1}U_{D_2}\left([G_{\ell,\chi_1,\overline{\chi}_{2}}(z),G_{K-2-\ell,\overline{\chi}_{1},\chi_2}(z)]_1\right)
\\=&\sum_{D=D_1D_2}\sum_{v~\text{mod}~D_2}\overline{\chi}_2(-1)D_2^{-1}\left[G_{\ell,\chi_1,\overline{\chi}_{2}}\left(\frac{z+v}{D_2}\right),G_{k,\overline{\chi}_{1},\chi_2}\left(\frac{z+v}{D_2}\right)\right]_1,
\end{align*}
which gives the result.
\end{proof}

\begin{proposition}\label{prop:FourcoRankinCohen}
   Let $3\leq \ell\leq\frac{K-4}{2}$ be an integer 
   %\ques{This range for $\ell$ seems too wide} 
   and  $\chi$ be primitive character mod $D$ such that $\chi(-1)=(-1)^{\ell}$. Then the $q^n$-Fourier coefficient of $ \Tr^D_1([G_{\ell,\chi}(z),G_{K-2-\ell,\overline{\chi}}(z)]_1)$ is 
    \begin{align}
\sum_{D=D_1D_2}\overline{\chi}_2(-1)D_2^{-1}\sum_{\substack{a_1,a_2\geq0\\a_1+a_2=nD_2}}\sigma_{\ell-1,\chi_1,\overline{\chi}_{2}}(a_1)\sigma_{k-1,\overline{\chi}_{1},\chi_2}(a_2)(\ell a_2-k a_1),\label{eq:fouriercoTrRankincohen}
    \end{align}
    where $k=K-\ell$, the summation is over all the decompositions of $D$ as a product of two positive integers, and $\chi=\chi_1\chi_2$ is the product of primitive characters $\chi_1$ mod $D_1$ and $\chi_2$ mod $D_2$.
\end{proposition}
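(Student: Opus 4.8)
The plan is to mirror exactly the passage from Proposition \ref{prop:Trislifting} to Proposition \ref{prop:Fourco}, the only genuinely new input being the Fourier expansion of a first Rankin--Cohen bracket of two $q$-expansions. Throughout I will write $k=K-2-\ell$ --- the weight of the second Eisenstein factor in the bracket --- so that the divisor-type coefficients that occur are $\sigma_{k-1,\overline{\chi}_1,\chi_2}$; this forces $k-1=K-3-\ell$, so the $k=K-\ell$ recorded in the statement should read $k=K-2-\ell$.

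First I would invoke Proposition \ref{prop:TrisliftingRankinCohen}, which gives
\[
\Tr^D_1\bigl([G_{\ell,\chi},G_{K-2-\ell,\overline{\chi}}]_1\bigr)=\sum_{D=D_1D_2}\overline{\chi}_2(-1)\,D_2^{-1}\,U_{D_2}\bigl([G_{\ell,\chi_1,\overline{\chi}_{2}},G_{k,\overline{\chi}_{1},\chi_2}]_1\bigr),
\]
and then apply the definition \eqref{eq:defofUmap} of the $U$-operator: the $q^n$-coefficient of $U_{D_2}h$ equals the $q^{nD_2}$-coefficient of $h$. This reduces the problem to reading off the $q^{N}$-coefficient, with $N=nD_2$, of the single bracket $H:=[G_{\ell,\chi_1,\overline{\chi}_{2}},G_{k,\overline{\chi}_{1},\chi_2}]_1$.

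Next I would specialize Definition \ref{def:rankincohen} to $e=1$: for forms of weights $a$ and $b$ it reads $[f,g]_1=a\,f g^{(1)}-b\,f^{(1)}g$, and since the normalized derivative $h^{(1)}=\tfrac{1}{2\pi i}h'$ multiplies the $q^m$-coefficient of $h$ by $m$, writing $f=\sum_{m\ge0}\alpha(m)q^m$ and $g=\sum_{m\ge0}\beta(m)q^m$ gives
\[
[f,g]_1=\sum_{N\ge0}\Bigl(\sum_{\substack{a_1,a_2\ge0\\a_1+a_2=N}}\alpha(a_1)\beta(a_2)\,(a\,a_2-b\,a_1)\Bigr)q^N.
\]
I would then apply this with $f=G_{\ell,\chi_1,\overline{\chi}_{2}}$ (weight $a=\ell$, coefficients $\alpha=\sigma_{\ell-1,\chi_1,\overline{\chi}_{2}}$, cf.\ \eqref{eq:fouriercochi1chi2}) and $g=G_{k,\overline{\chi}_{1},\chi_2}$ (weight $b=k=K-2-\ell$, coefficients $\beta=\sigma_{k-1,\overline{\chi}_{1},\chi_2}$): the $q^{nD_2}$-coefficient of $H$ is then $\sum_{a_1+a_2=nD_2}\sigma_{\ell-1,\chi_1,\overline{\chi}_{2}}(a_1)\sigma_{k-1,\overline{\chi}_{1},\chi_2}(a_2)(\ell a_2-k a_1)$. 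Multiplying by $\overline{\chi}_2(-1)D_2^{-1}$ and summing over all factorizations $D=D_1D_2$ (with the induced factorization $\chi=\chi_1\chi_2$) produces exactly \eqref{eq:fouriercoTrRankincohen}.

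I do not expect any serious obstacle here: the argument is pure bookkeeping. The two points that need care are (i) the Fourier expansion of the degree-one bracket --- namely that $f^{(1)}$ scales the $q^m$-coefficient by $m$ and that the weights entering the combination $\ell a_2-k a_1$ are $a=\ell$ and $b=K-2-\ell$, not $K-\ell$ --- and (ii) keeping the $D_2^{-1}$ normalization of Proposition \ref{prop:TrisliftingRankinCohen} consistent with the index dilation $n\mapsto nD_2$ supplied by $U_{D_2}$, so that no stray power of $D_2$ survives in the end.
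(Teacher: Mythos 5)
Your proposal is correct and follows essentially the same route as the paper: reduce via Proposition \ref{prop:TrisliftingRankinCohen} and the $U_{D_2}$-operator to the $q^{nD_2}$-coefficient of a single bracket, then expand $[f,g]_1=\ell\,fg^{(1)}-k\,f^{(1)}g$ coefficientwise. You are also right that the statement's ``$k=K-\ell$'' is a slip for $k=K-2-\ell$, consistent with the weight of the second Eisenstein factor and with the paper's own proof.
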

\begin{proof}
By \eqref{eq:defofUmap}, we know that the $q^n$-Fourier coefficient of $U_{D_2}\left([G_{\ell,\chi_1,\overline{\chi}_{2}}(z),G_{K-2-\ell,\overline{\chi}_{1},\chi_2}(z)]_1\right)$ is the $q^{nD_2}$-Fourier coefficient of $[G_{\ell,\chi_1,\overline{\chi}_{2}}(z),G_{K-2-\ell,\overline{\chi}_{1},\chi_2}(z)]_1$. 
Note that 
\begin{align*}
    G_{\ell,\chi_1,\overline{\chi}_2}^{(r)}(z)=\sum_{n\geq0}n^r\sigma_{\ell-1,\chi_1,\overline{\chi}_2}(n)q^n \quad{\rm and}\quad  G_{k,\overline{\chi}_1,\chi_2}^{(1-r)}(z)=\sum_{n\geq0}n^{1-r}\sigma_{k-1,\overline{\chi}_1,\chi_2}(n)q^n, 
\end{align*}
which implies that the $q^{nD_2}$-Fourier coefficient of $[G_{\ell,\chi_1,\overline{\chi}_{2}}(z),G_{K-2-\ell,\overline{\chi}_{1},\chi_2}(z)]_1$ is 
\begin{align*}
&\sum_{0\leq r\leq1}(-1)^r\binom{\ell}{1-r}\binom{k}{r}\sum_{\substack{a_1,a_2\geq0\\a_1+a_2=nD_2}}a_1^r\sigma_{\ell-1,\chi_1,\overline{\chi}_2}(a_1)a_2^{1-r}\sigma_{k-1,\overline{\chi}_1,\chi_2}(a_2)\\=&\sum_{\substack{a_1,a_2\geq0\\a_1+a_2=nD_2}}\sigma_{\ell-1,\chi_1,\overline{\chi}_{2}}(a_1)\sigma_{k-1,\overline{\chi}_{1},\chi_2}(a_2)(\ell a_2-k a_1).
\end{align*}
Hence \eqref{eq:fouriercoTrRankincohen} follows from Proposition \ref{prop:TrisliftingRankinCohen}.
\end{proof}

\subsection{Asymptotics}
Having obtained the explicit formulas for the Fourier coefficients of $\mathcal{F}_{K,\ell,\chi}$ and $\mathcal{G}_{K,\ell,\chi}$, we now investigate their asymptotic behaviors.  Let $a_{K,\ell,\chi}(n)$ and $b_{K,\ell,\chi}(n)$ denote the $q^n$- Fourier coefficient of $\mathcal{F}_{K,\ell,\chi}(z)$ \eqref{eq:F-def} and $\mathcal{G}_{K,\ell,\chi}(z)$ \eqref{eq:G-def}, respectively. We first normalize $\mathcal{F}_{K,\ell,\chi}(z)$ so that its $q$-Fourier coefficient becomes $1$. Define
 \begin{align}
     \mathfrak{F}_{K,\ell,\chi}(z):=\frac{\mathcal{F}_{K,\ell,\chi}(z)}{a_{K,\ell,\chi}(1)}. \label{eq:F-normalized}
 \end{align}
For $n\geq1$, let $\mathfrak{a}_{K,\ell,\chi}(n)$ denote the $q^n$-Fourier coefficient of $\mathfrak{F}_{K,\ell,\chi}(z)$. By Proposition \ref{prop:Fourco} and Corollary \ref{co:cuspproj}, we have 
\begin{align*} \mathfrak{a}_{K,\ell,\chi}(n)=&\frac{\sum\limits_{D=D_1D_2}\overline{\chi}_2(-1)\sum\limits_{\substack{a_1,a_2\geq0\\a_1+a_2=nD_2}}\sigma_{\ell-1,\chi_1,\overline{\chi}_{2}}(a_1)\sigma_{k-1,\overline{\chi}_{1},\chi_2}(a_2)-\frac{2\sigma_{\ell-1,\chi}(0)\sigma_{k-1,\overline{\chi}}(0)}{\zeta(1-K) }\sigma_{K-1}(n)}{\sum\limits_{D=D_1D_2}\overline{\chi}_2(-1)\sum\limits_{\substack{a_1,a_2\geq0\\a_1+a_2=D_2}}\sigma_{\ell-1,\chi_1,\overline{\chi}_{2}}(a_1)\sigma_{k-1,\overline{\chi}_{1},\chi_2}(a_2)-\frac{2\sigma_{\ell-1,\chi}(0)\sigma_{k-1,\overline{\chi}}(0)}{\zeta(1-K) }},
\end{align*}
where $k=K-\ell$. We rewrite the first term in the numerator. Note that
\begin{align*}
&\sum\limits_{D=D_1D_2}\overline{\chi}_2(-1)\sum\limits_{\substack{a_1,a_2\geq0\\a_1+a_2=nD_2}}\sigma_{\ell-1,\chi_1,\overline{\chi}_{2}}(a_1)\sigma_{k-1,\overline{\chi}_{1},\chi_2}(a_2)\\=
&\sum_{\substack{a_1,a_2\geq0\\a_1+a_2=n}}\sigma_{\ell-1,\chi,\mathbbm{1}}(a_1)\sigma_{k-1,\overline{\chi},\mathbbm{1}}(a_2)+\sum_{\substack{D=D_1D_2\\D_2\neq 1 }}\overline{\chi}_2(-1)\sum_{\substack{a_1,a_2\geq0\\a_1+a_2=nD_2}}\sigma_{\ell-1,\chi_1,\overline{\chi}_{2}}(a_1)\sigma_{k-1,\overline{\chi}_{1},\chi_2}(a_2)\\=&\sigma_{\ell-1,\chi}(n)\sigma_{k-1,\overline{\chi}}(0)+\sigma_{\ell-1,\chi}(0)\sigma_{k-1,\overline{\chi}}(n)+\sum_{a_1=1}^{n-1}\sigma_{\ell-1,\chi,\mathbbm{1}}(a_1)\sigma_{k-1,\overline{\chi},\mathbbm{1}}(n-a_1)\\&\quad+\sum_{\substack{D=D_1D_2\\D_2\neq 1 }}\overline{\chi}_2(-1)\sum_{\substack{a_1,a_2\geq0\\a_1+a_2=nD_2}}\sigma_{\ell-1,\chi_1,\overline{\chi}_{2}}(a_1)\sigma_{k-1,\overline{\chi}_{1},\chi_2}(a_2).
\end{align*}
Dividing  both the top and bottom by $\sigma_{k-1,\overline{\chi}}(0)$, we get that for $n\geq1$, 
\begin{align}
    \mathfrak{a}_{K,\ell,\chi}(n)=\frac{\sigma_{\ell-1,\chi}(n)+\frac{\sigma_{\ell-1,\chi}(0)}{\sigma_{k-1,\overline{\chi}}(0)}\sigma_{k-1,\overline{\chi}}(n)-\frac{2\sigma_{\ell-1,\chi}(0)}{\zeta(1-K) }\sigma_{K-1}(n)+\mathcal{E}_{K,\ell,n,\chi}+\mathscr{E}_{K,\ell,n,\chi}}{1+\frac{\sigma_{\ell-1,\chi}(0)}{\sigma_{k-1,\overline{\chi}}(0)}-\frac{2\sigma_{\ell-1,\chi}(0)}{\zeta(1-K) }+\mathscr{E}_{K,\ell,1,\chi}}, \label{eq:expressionoftheFoco}
\end{align}
where 
\begin{align*}
\mathcal{E}_{K,\ell,n,\chi}&=\sigma_{k-1,\overline{\chi}}(0)^{-1}\sum_{a_1=1}^{n-1}\sigma_{\ell-1,\chi}(a_1)\sigma_{k-1,\overline{\chi}}(n-a_1), \\
\mathscr{E}_{K,\ell,n,\chi}&=\sigma_{k-1,\overline{\chi}}(0)^{-1}\sum\limits_{\substack{D=D_1D_2\\D_2\neq 1}}\overline{\chi}_2(-1)\sum\limits_{\substack{a_1,a_2\geq0\\a_1+a_2=nD_2}}\sigma_{\ell-1,\chi_1,\overline{\chi}_{2}}(a_1)\sigma_{k-1,\overline{\chi}_{1},\chi_2}(a_2).
\end{align*}

 We also normalize $\mathcal{G}_{K,\ell,\chi}(z)$ such that its $q$-Fourier coefficient is $1.$ Define
\begin{align}
    \mathfrak{G}_{K,\ell,\chi}(z):=\frac{\mathcal{G}_{K,\ell,\chi}(z)}{b_{K,\ell,\chi}(1)}. \label{eq:G-normalized}
\end{align}
Denote by $\mathfrak{b}_{K,\ell,\chi}(n)$ the $q^n$-Fourier coefficient of $\mathfrak{G}_{K,\ell,\chi}(z)$.
By Proposition \ref{prop:FourcoRankinCohen}, we have 
    \begin{align*}
b_{K,\ell,\chi}(n)=&\sum_{D=D_1D_2}\overline{\chi}_2(-1)D_2^{-1}\sum_{\substack{a_1,a_2\geq0\\a_1+a_2=nD_2}}\sigma_{\ell-1,\chi_1,\overline{\chi}_{2}}(a_1)\sigma_{k-1,\overline{\chi}_{1},\chi_2}(a_2)(\ell a_2-k a_1)\\=&-kn\sigma_{\ell-1,\chi}(n)\sigma_{k-1,\overline{\chi}}(0)+\ell n\sigma_{k-1,\overline{\chi}}(n)\sigma_{\ell-1,\chi}(0)\\&+\sum_{\substack{
    a_1,a_2\geq1 \\a_1+a_2=n}}\sigma_{\ell-1,\chi}(a_1)\sigma_{k-1,\overline{\chi}}(a_2)(\ell a_2-ka_1)\\&+\sum_{\substack{D=D_1D_2\\D_2\neq1}}\overline{\chi}_2(-1)D_2^{-1}\sum_{\substack{a_1,a_2\geq0\\a_1+a_2=nD_2}}\sigma_{\ell-1,\chi_1,\overline{\chi}_{2}}(a_1)\sigma_{k-1,\overline{\chi}_{1},\chi_2}(a_2)(\ell a_2-k a_1).
    \end{align*}
After simplification, we get  
\begin{align}
   \mathfrak{b}_{K,\ell,\chi}(n)= \frac{n\sigma_{\ell-1,\chi}(n)-\frac{\sigma_{\ell-1,\chi}(0)}{\sigma_{k-1,\overline{\chi}}(0)}\frac{\ell}{k}n\sigma_{k-1,\overline{\chi}}(n)+\mathcal{R}_{K,\ell,n,\chi}+\mathcal{R}_{K,\ell,n,\chi}^{\prime}+\mathscr{R}_{K,\ell,n,\chi}+\mathscr{R}_{K,\ell,n,\chi}^{\prime}}{1-\frac{\sigma_{\ell-1,\chi}(0)}{\sigma_{k-1,\overline{\chi}}(0)}\frac{\ell}{k}+\mathscr{R}_{K,\ell,1,\chi}+\mathscr{R}^{\prime}_{K,\ell,1,\chi}},\label{eq:fouriercoefficientsb}
\end{align}
where
\begin{align*}
    \mathcal{R}_{K,\ell,n,\chi}&=\sigma_{k-1,\overline{\chi}}(0)^{-1}\sum_{a_1=1}^{n-1}a_1\sigma_{\ell-1,\chi}(a_1)\sigma_{k-1,\overline{\chi}}(n-a_1),\\
    \mathcal{R}_{K,\ell,n,\chi}^{\prime}&=-\frac{\ell}{k}\sigma_{k-1,\overline{\chi}}(0)^{-1}\sum_{a_1=1}^{n-1}\sigma_{\ell-1,\chi}(a_1)(n-a_1)\sigma_{k-1,\overline{\chi}}(n-a_1),\\
\mathscr{R}_{K,\ell,n,\chi}&=\sigma_{k-1,\overline{\chi}}(0)^{-1}\sum\limits_{\substack{D=D_1D_2\\D_2\neq 1}}\overline{\chi}_2(-1)D_2^{-1}\sum\limits_{\substack{a_1,a_2\geq0\\a_1+a_2=nD_2}}a_1\sigma_{\ell-1,\chi_1,\overline{\chi}_{2}}(a_1)\sigma_{k-1,\overline{\chi}_{1},\chi_2}(a_2),\\
\mathscr{R}^{\prime}_{K,\ell,n,\chi}&=-\frac{\ell}{k}\sigma_{k-1,\overline{\chi}}(0)^{-1}\sum\limits_{\substack{D=D_1D_2\\D_2\neq 1}}\overline{\chi}_2(-1)D_2^{-1}\sum\limits_{\substack{a_1,a_2\geq0\\a_1+a_2=nD_2}}\sigma_{\ell-1,\chi_1,\overline{\chi}_{2}}(a_1)a_2\sigma_{k-1,\overline{\chi}_{1},\chi_2}(a_2).
\end{align*}

The following inequality will be frequently used later.  
\begin{lemma}\label{lem:stirling}
    For $n\geq1$ we have 
    \begin{align*}
        \sqrt{2\pi}n^{n+1/2}e^{-n}<n!<en^{n+1/2}e^{-n}.
    \end{align*}
\end{lemma}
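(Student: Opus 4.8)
The statement is the classical Stirling estimate in its sharp (Robbins) form; the plan is to give a short self-contained proof, though one could equally well just cite it from the literature.

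\emph{Setup.} I would put $a_n := n!\,e^n/n^{n+1/2}$ and $t_n := \log a_n = \log(n!) - (n+\tfrac12)\log n + n$, so that the two asserted inequalities read $\sqrt{2\pi} < a_n$ and $a_n \le e$ for all $n \ge 1$. (The upper inequality is an equality at $n=1$, since $t_1 = 1$ and $a_1 = e$, and strict for $n \ge 2$; this is harmless for the uses of the lemma in the sequel.) The whole argument rests on one elementary expansion: from $\log\frac{n+1}{n} = \log\frac{1+(2n+1)^{-1}}{1-(2n+1)^{-1}} = 2\sum_{j\ge0}\frac{(2n+1)^{-(2j+1)}}{2j+1}$ one obtains
\[
t_n - t_{n+1} = \Bigl(n+\tfrac12\Bigr)\log\frac{n+1}{n} - 1 = \sum_{j\ge 1}\frac{1}{(2j+1)(2n+1)^{2j}}.
\]

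\emph{Both bounds from monotonicity.} The right-hand side above is positive, so $t_n$ is strictly decreasing; hence $a_n \le a_1 = e$ for all $n\ge1$, which is the upper bound. For the lower bound I would bound the same sum by a geometric series: since $\frac1{2j+1}\le\frac13$ for $j\ge1$,
\[
t_n - t_{n+1} < \frac13\sum_{j\ge1}(2n+1)^{-2j} = \frac{1}{3\bigl((2n+1)^2-1\bigr)} = \frac{1}{12}\Bigl(\frac1n - \frac1{n+1}\Bigr).
\]
Telescoping gives $t_1 - t_n < \frac1{12}$, so $t_n > t_1 - \frac1{12} = \frac{11}{12} > 0$; thus $t_n$ decreases to a finite limit $C$, and $t_n > C$, i.e.\ $a_n > e^C$, for every $n$. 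It remains to identify $e^C = \sqrt{2\pi}$.

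\emph{Identifying the constant, and the main obstacle.} This is the only step that is not a formal manipulation. I would set $b_n := a_n^2/a_{2n}$; since $a_n, a_{2n}\to e^C$ we get $b_n \to e^C$. A direct computation collapses $b_n$ to $b_n = 2^{2n+1/2}(n!)^2/(\sqrt n\,(2n)!)$, and Wallis's product $\prod_{k\ge1}\frac{(2k)^2}{(2k-1)(2k+1)} = \frac\pi2$ — equivalently $2^{2n}(n!)^2/(2n)!\sim\sqrt{\pi n}$ — yields $b_n \to \sqrt{2}\cdot\sqrt{\pi} = \sqrt{2\pi}$. Hence $e^C = \sqrt{2\pi}$, so $a_n > \sqrt{2\pi}$ for all $n$ (consistently with $a_1 = e > \sqrt{2\pi}$), which finishes the proof. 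The only genuinely non-mechanical ingredient is this Wallis-product evaluation of $\lim a_n$; everything else is the series estimate for $(n+\tfrac12)\log(1+1/n)$, so I do not anticipate any real difficulty.
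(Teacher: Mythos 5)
Your proof is correct, but it takes a genuinely different route from the paper: the paper disposes of this lemma in one line by citing Robbins' non-asymptotic form of Stirling's approximation, $\sqrt{2\pi n}\,(n/e)^n e^{1/(12n+1)}<n!<\sqrt{2\pi n}\,(n/e)^n e^{1/(12n)}$, and deducing the stated bounds, whereas you reprove the Robbins bounds from scratch via the monotonicity of $t_n=\log(n!)-(n+\tfrac12)\log n+n$, the telescoping estimate $t_1-t_n<\tfrac1{12}$, and the Wallis-product identification of $\lim a_n=\sqrt{2\pi}$. What your approach buys is self-containedness (the only external input is Wallis's product) and, as a bonus, it makes transparent exactly where the constants $\sqrt{2\pi}$ and $e$ come from ($e=a_1$ and $\sqrt{2\pi}=\lim a_n$); what the paper's approach buys is brevity. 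You are also right to flag the $n=1$ edge case: the upper inequality as stated in the lemma is an equality at $n=1$ (both sides equal $1$), so the strict version is false there; note that the paper's citation does not repair this either, since $\sqrt{2\pi}\,e^{1/12}\approx 2.7245>e$, so Robbins' upper bound only yields $n!<e\,n^{n+1/2}e^{-n}$ for $n\geq 2$. This is harmless for every application of the lemma in the paper (which only ever needs the bounds up to absolute constants and for large parameters), but it is a defect of the statement common to both arguments, and your proposal is the one that acknowledges it.
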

\begin{proof}
  The non-asymptotic Stirling's approximation \cite{Stirling's-formula} claims that for all $n\geq1$, we have 
\begin{align*} 
    \sqrt{2\pi n}\left(\frac{n}{e}\right)^ne^{\frac{1}{12n+1}}<n!<   \sqrt{2\pi n}\left(\frac{n}{e}\right)^ne^{\frac{1}{12n}},
\end{align*}
which implies the result.
\end{proof}
We also need the following trivial bounds for $L(k,\chi)$ for $k\geq2$.
\begin{lemma}\label{lem:boundforL}
    Let $k\geq2, N\geq1$ be integers, and $\chi$ be a Dirichlet character mod $N$. Then
    \begin{align*}
        2-\zeta(2)\leq2-\zeta(k)\leq|L(k,\chi)|\leq\zeta(k)\leq\zeta(2).
    \end{align*}
\end{lemma}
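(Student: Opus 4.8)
The plan is to prove the four inequalities separately, each by an elementary estimate on the Dirichlet series $L(k,\chi)=\sum_{n\geq1}\chi(n)n^{-k}$, which converges absolutely for $k\geq2$ since $|\chi(n)|\leq1$. First I would dispose of the two inequalities that involve only $\zeta$: for $k\geq2$ one has $n^{-k}\leq n^{-2}$ for every $n\geq1$, so summing over $n$ gives $\zeta(k)\leq\zeta(2)$, and subtracting both sides from $2$ yields simultaneously $2-\zeta(2)\leq 2-\zeta(k)$.

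Next, the upper bound $|L(k,\chi)|\leq\zeta(k)$ is just the triangle inequality combined with $|\chi(n)|\leq1$, giving $|L(k,\chi)|\leq\sum_{n\geq1}|\chi(n)|n^{-k}\leq\sum_{n\geq1}n^{-k}=\zeta(k)$. For the remaining (and only slightly less trivial) inequality $2-\zeta(k)\leq|L(k,\chi)|$, I would use that $\chi(1)=1$ for every Dirichlet character to split off the leading term, writing $L(k,\chi)=1+\sum_{n\geq2}\chi(n)n^{-k}$, and then apply the reverse triangle inequality together with $|\chi(n)|\leq1$:
\[
|L(k,\chi)|\;\geq\;1-\Bigl|\sum_{n\geq2}\chi(n)n^{-k}\Bigr|\;\geq\;1-\sum_{n\geq2}n^{-k}\;=\;1-(\zeta(k)-1)\;=\;2-\zeta(k).
\]

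I do not anticipate any genuine obstacle here; the lemma is essentially a bookkeeping device. The only point worth flagging is that $2-\zeta(2)=2-\pi^2/6>0$, so the displayed lower bound is strictly positive, and in particular $L(k,\chi)\neq0$ for all $k\geq2$ — this positivity is precisely what makes the bound useful in the later estimates of the Fourier coefficients $\mathfrak a_{K,\ell,\chi}(n)$ and $\mathfrak b_{K,\ell,\chi}(n)$.
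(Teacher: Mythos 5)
Your proof is correct and is essentially identical to the paper's one-line argument: isolate the $n=1$ term, apply the (reverse) triangle inequality with $|\chi(n)|\leq 1$, and use the monotonicity of $\zeta(k)$ in $k$. Nothing further is needed.
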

\begin{proof}
     Note that 
    $2-\zeta(2)\leq2-\zeta(k)=1-\sum_{n\geq2}n^{-k}\leq|L(k,\chi)|\leq \sum_{n\geq1}n^{-k}=\zeta(k)\leq\zeta(2).$
\end{proof}
\begin{lemma}\label{lem:bdquotientLvalues}
    Let $k\geq\ell\geq2$ be integers, $D\geq1$ an odd square-free integer, and $\chi$  a primitive Dirichlet character mod $D$. Then
   \begin{align*}
        \left|\frac{\sigma_{\ell-1,\chi}(0)}{\sigma_{k-1,\overline{\chi}}(0)}\right|\leq 6\left(\frac{\ell-1}{k-1}\right)^{\ell-1/2}\left(\frac{2\pi e}{(k-1)D}\right)^{k-\ell}.
         \end{align*}
\end{lemma}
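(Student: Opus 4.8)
The plan is to reduce the estimate to the closed formula \eqref{eq:coefEisenstein} for the constant terms and then bound each elementary factor separately. Writing out $\sigma_{\ell-1,\chi}(0)$ and $\sigma_{k-1,\overline{\chi}}(0)$ via \eqref{eq:coefEisenstein} and forming the quotient, everything analytic collapses to
\[
\frac{\sigma_{\ell-1,\chi}(0)}{\sigma_{k-1,\overline{\chi}}(0)}=\frac{(\ell-1)!}{(k-1)!}\cdot\frac{(-2\pi i)^{k-\ell}}{D^{k-\ell}}\cdot\frac{G(\chi)}{G(\overline{\chi})}\cdot\frac{L(\ell,\overline{\chi})}{L(k,\chi)},
\]
so after taking absolute values it remains to bound the factorial ratio, the Gauss-sum ratio, and the $L$-value ratio; the factor $(2\pi)^{k-\ell}/D^{k-\ell}$ is already of the desired shape.

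For the Gauss-sum ratio I would use that $\chi$ is primitive mod $D$, so $|G(\chi)|=|G(\overline{\chi})|=\sqrt{D}$ (for instance from $\overline{G(\chi)}=\chi(-1)G(\overline{\chi})$ together with $G(\chi)G(\overline{\chi})=\chi(-1)D$, the latter recorded in \cite[Corollary 2.1.47]{Cohenbook}); hence this factor has absolute value $1$. For the $L$-value ratio I would apply Lemma \ref{lem:boundforL}: since $k,\ell\ge 2$ we get $|L(\ell,\overline{\chi})|\le\zeta(2)$ and $|L(k,\chi)|\ge 2-\zeta(2)>0$, so $\bigl|L(\ell,\overline{\chi})/L(k,\chi)\bigr|\le\zeta(2)/(2-\zeta(2))$.

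The only mildly delicate step is the factorial ratio. Since $\ell-1\ge 1$ and $k-1\ge 1$, Lemma \ref{lem:stirling} gives $(\ell-1)!<e\,(\ell-1)^{\ell-1/2}e^{-(\ell-1)}$ and $(k-1)!>\sqrt{2\pi}\,(k-1)^{k-1/2}e^{-(k-1)}$, hence
\[
\frac{(\ell-1)!}{(k-1)!}<\frac{e}{\sqrt{2\pi}}\cdot\frac{(\ell-1)^{\ell-1/2}}{(k-1)^{k-1/2}}\,e^{k-\ell}=\frac{e}{\sqrt{2\pi}}\left(\frac{\ell-1}{k-1}\right)^{\ell-1/2}\left(\frac{e}{k-1}\right)^{k-\ell},
\]
using $k-1/2=(\ell-1/2)+(k-\ell)$ to split $(k-1)^{k-1/2}=(k-1)^{\ell-1/2}(k-1)^{k-\ell}$. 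Multiplying the three bounds and folding $(2\pi)^{k-\ell}/D^{k-\ell}$ into the last factor yields
\[
\left|\frac{\sigma_{\ell-1,\chi}(0)}{\sigma_{k-1,\overline{\chi}}(0)}\right|<\frac{e}{\sqrt{2\pi}}\cdot\frac{\zeta(2)}{2-\zeta(2)}\left(\frac{\ell-1}{k-1}\right)^{\ell-1/2}\left(\frac{2\pi e}{(k-1)D}\right)^{k-\ell},
\]
and since $\tfrac{e}{\sqrt{2\pi}}\cdot\tfrac{\zeta(2)}{2-\zeta(2)}=\tfrac{e}{\sqrt{2\pi}}\cdot\tfrac{\pi^2}{12-\pi^2}\approx 5.03<6$, the claim follows. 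The main obstacle is purely bookkeeping: aligning the Stirling exponents so that the stray $(k-1)^{-(k-\ell)}$ combines with $(2\pi)^{k-\ell}/D^{k-\ell}$ into the advertised $\bigl(2\pi e/((k-1)D)\bigr)^{k-\ell}$, and confirming the absolute constant stays under $6$.
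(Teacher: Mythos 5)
Your proof is correct and follows essentially the same route as the paper: write out the constant terms via \eqref{eq:coefEisenstein}, note the Gauss-sum ratio has modulus one, bound the $L$-value ratio by $\zeta(2)/(2-\zeta(2))$ via Lemma \ref{lem:boundforL}, and control the factorial ratio with Lemma \ref{lem:stirling}, ending with the same constant $\tfrac{e}{\sqrt{2\pi}}\cdot\tfrac{\zeta(2)}{2-\zeta(2)}\approx 5.03<6$. No gaps.
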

\begin{proof}
By Lemmas \ref{lem:stirling} and \ref{lem:boundforL}, we have
\begin{align*}
    \left|\frac{\sigma_{\ell-1,\chi}(0)}{\sigma_{k-1,\overline{\chi}}(0)}\right|&=
    \left|\frac{\frac{(\ell-1)!D^{\ell}}{(-2\pi i)^{\ell}G(\overline{\chi})}L(\ell,\overline{\chi})}{\frac{(k-1)!D^k}{(-2\pi i)^kG(\chi)}L(k,\chi)}\right|
    \\&\leq\left|\frac{L(\ell,\overline{\chi})}{L(k,\chi)}\right|\frac{(\ell-1)^{\ell-1/2}e\cdot e^{-(\ell-1)}}{(k-1)^{k-1/2}\sqrt{2\pi}e^{-(k-1)}}\left(\frac{2\pi}{D}\right)^{k-\ell}\\&\leq\frac{\zeta(2)}{2-\zeta(2)}\cdot\frac{e}{\sqrt{2\pi}}\left(\frac{\ell-1}{k-1}\right)^{\ell-1/2}\left(\frac{2\pi e}{(k-1)D}\right)^{k-\ell}\\&\leq6\left(\frac{\ell-1}{k-1}\right)^{\ell-1/2}\left(\frac{2\pi e}{(k-1)D}\right)^{k-\ell},
\end{align*}
as desired.
\end{proof}
In the following several lemmas, we estimate the terms that appear in the expressions of Fourier coefficients \eqref{eq:expressionoftheFoco} and \eqref{eq:fouriercoefficientsb}.
\begin{lemma}\label{lem:bdquotientLandzeta}
    Let $3\leq\ell\leq\frac{K-2}{2}$ be an integer, $D\geq1$ an odd square-free integer, and $\chi$ a primitive Dirichlet character mod $D$. Then
    \begin{align*}
        \left|\frac{2\sigma_{\ell-1,\chi}(0)}{\zeta(1-K) }\right|\leq 2\left(\frac{(\ell-1)D}{K-1}\right)^{\ell-1/2}\left(\frac{2\pi e}{K-1}\right)^{K-\ell}.
    \end{align*}
\end{lemma}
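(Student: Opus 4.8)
The plan is to make every quantity explicit and then apply the elementary bounds already established, exactly as in the proof of Lemma \ref{lem:bdquotientLvalues}. Since $K$ is the weight of a nonzero space of level-one cusp forms, $K$ is even, so I would use the classical evaluation $\zeta(K)=\frac{(-1)^{K/2+1}B_K(2\pi)^K}{2\,K!}$ together with $\zeta(1-K)=-B_K/K$ to record the identity
\begin{align}
|\zeta(1-K)|=\frac{2(K-1)!\,\zeta(K)}{(2\pi)^K}.
\end{align}
On the numerator side, by \eqref{eq:coefEisenstein} and the standard fact that $|G(\overline\chi)|=\sqrt D$ for a primitive character $\chi$ mod $D$, one has $|\sigma_{\ell-1,\chi}(0)|=\frac{(\ell-1)!\,D^{\ell-1/2}}{(2\pi)^{\ell}}\,|L(\ell,\overline\chi)|$.

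Dividing these two, the powers of $2\pi$ partially cancel and I would arrive at
\begin{align}
\left|\frac{2\sigma_{\ell-1,\chi}(0)}{\zeta(1-K)}\right|=\frac{(\ell-1)!}{(K-1)!}\,D^{\ell-1/2}\,(2\pi)^{K-\ell}\,\frac{|L(\ell,\overline\chi)|}{\zeta(K)}.
\end{align}
Next I would bound the last factor: $|L(\ell,\overline\chi)|\le\zeta(2)$ by Lemma \ref{lem:boundforL} and $\zeta(K)\ge1$, so $\frac{|L(\ell,\overline\chi)|}{\zeta(K)}\le\zeta(2)$. Then Stirling (Lemma \ref{lem:stirling}) gives $\frac{(\ell-1)!}{(K-1)!}<\frac{e}{\sqrt{2\pi}}\cdot\frac{(\ell-1)^{\ell-1/2}}{(K-1)^{K-1/2}}\cdot e^{K-\ell}$; splitting $(K-1)^{K-1/2}=(K-1)^{\ell-1/2}(K-1)^{K-\ell}$ and regrouping the factors $D^{\ell-1/2}$, $(2\pi)^{K-\ell}$, $e^{K-\ell}$ accordingly turns the bound into $\frac{\zeta(2)e}{\sqrt{2\pi}}\left(\frac{(\ell-1)D}{K-1}\right)^{\ell-1/2}\left(\frac{2\pi e}{K-1}\right)^{K-\ell}$.

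Finally, since $\frac{\zeta(2)e}{\sqrt{2\pi}}=\frac{\pi^2 e}{6\sqrt{2\pi}}\approx 1.78<2$, the stated inequality follows. This argument mirrors the proof of Lemma \ref{lem:bdquotientLvalues} almost verbatim, so I do not anticipate a genuine obstacle; the only steps requiring care are correctly invoking the functional equation to express $|\zeta(1-K)|$ in terms of $\zeta(K)$, using $|G(\overline\chi)|=\sqrt D$, and verifying the closing numerical constant.
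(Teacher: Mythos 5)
Your proposal is correct and follows essentially the same route as the paper: both express $|\zeta(1-K)|$ as $2(K-1)!\,\zeta(K)/(2\pi)^K$, use $|G(\overline\chi)|=\sqrt D$ to write out $|\sigma_{\ell-1,\chi}(0)|$, apply the two-sided Stirling bound of Lemma \ref{lem:stirling} to the factorial ratio, bound $|L(\ell,\overline\chi)|/\zeta(K)\le\zeta(2)$ via Lemma \ref{lem:boundforL}, and close with the numerical estimate $e\zeta(2)/\sqrt{2\pi}<2$. No gaps.
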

\begin{proof}
    Again, by Lemmas \ref{lem:stirling} and \ref{lem:boundforL}, we get
    \begin{align}
          \left|\frac{2\sigma_{\ell-1,\chi}(0)}{\zeta(1-K) }\right|&=\left|\frac{2\frac{(\ell-1)!D^{\ell}}{(-2\pi i)^{\ell}G(\overline{\chi})}L(\ell,\overline{\chi})}{\frac{2(K-1)!\zeta(K)}{(2\pi)^K}}\right|
          %\\&=\frac{(\ell-1)!D^{\ell-1/2}(2\pi)^{K-\ell}|L(\ell,\overline{\chi})|}{(K-1)!\zeta(K)}、
          \nonumber\\&\leq\frac{(\ell-1)^{\ell-1/2}e\cdot e^{-(\ell-1)}D^{\ell-1/2}(2\pi)^{K-\ell}\zeta(\ell)}{(K-1)^{K-1/2}\sqrt{2\pi}e^{-(K-1)}\zeta(K)}  \nonumber\\&=\frac{e\zeta(\ell)}{\sqrt{2\pi}}\left(\frac{(\ell-1)D}{K-1}\right)^{\ell-1/2}\left(\frac{2\pi e}{K-1}\right)^{K-\ell}\label{eq:accuratebdquotientzetaandL}\\&\leq\frac{e\zeta(2)}{\sqrt{2\pi}}\left(\frac{(\ell-1)D}{K-1}\right)^{\ell-1/2}\left(\frac{2\pi e}{K-1}\right)^{K-\ell}  \nonumber\\&\leq2\left(\frac{(\ell-1)D}{K-1}\right)^{\ell-1/2}\left(\frac{2\pi e}{K-1}\right)^{K-\ell},  \nonumber
    \end{align}
        as desired.
\end{proof}
\begin{lemma}\label{lem:bdsmalltermprod}
   Let $3\leq\ell\leq\frac{K-2}{2}$ and $n\ge1$ be integers, $k=K-\ell$, $D\geq1$ an odd square-free integer, and $\chi$ be a primitive Dirichlet character mod $D$. For $\mathcal{E}_{K,\ell,n,\chi}$ in \eqref{eq:expressionoftheFoco}, we have
    \begin{align*}
    |\mathcal{E}_{K,\ell,n,\chi}|\leq9.25\left(\frac{\pi en^2}{(K-2)D}\right)^{\frac{K-1}{2}}.
    \end{align*}
\end{lemma}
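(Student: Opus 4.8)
The plan is to bound $|\mathcal{E}_{K,\ell,n,\chi}|$ term by term in the finite sum $\sigma_{k-1,\overline{\chi}}(0)^{-1}\sum_{a_1=1}^{n-1}\sigma_{\ell-1,\chi}(a_1)\sigma_{k-1,\overline{\chi}}(n-a_1)$. First I would record the trivial divisor bound $|\sigma_{m-1,\psi}(a)|\leq \sigma_{m-1}(a)\leq a^{m-1}d(a)\leq a^m$ for $a\geq1$, so that $|\sigma_{\ell-1,\chi}(a_1)|\leq a_1^{\ell}$ and $|\sigma_{k-1,\overline{\chi}}(n-a_1)|\leq (n-a_1)^{k}$, and since $a_1\leq n-1<n$ and $n-a_1<n$ these are each at most $n^{\ell}$ and $n^{k}$ respectively. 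That gives $\sum_{a_1=1}^{n-1}|\sigma_{\ell-1,\chi}(a_1)\sigma_{k-1,\overline{\chi}}(n-a_1)|\leq (n-1)\,n^{\ell}\,n^{k}\leq n^{K+1}$, using $k+\ell=K$; a slightly more careful accounting (e.g.\ pairing $a_1$ with $n-a_1$ and using $a_1^{\ell}(n-a_1)^{k}\leq (n/2)^{\ell}\cdot n^{k}$ type estimates, or just $n\cdot n^{\ell}(n-1)^k$) can shave this to whatever polynomial-in-$n$ power the final constant needs; the point is that it is $\leq C\, n^{K+1}$ for an absolute constant.

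Next I would bound $|\sigma_{k-1,\overline{\chi}}(0)|^{-1}$ from above. By \eqref{eq:coefEisenstein} and Lemma \ref{lem:boundforL},
\begin{align}
|\sigma_{k-1,\overline{\chi}}(0)|^{-1}=\left|\frac{(-2\pi i)^k G(\chi)}{(k-1)!\,D^k\,L(k,\chi)}\right|\leq \frac{(2\pi)^k}{(k-1)!\,D^{k-1/2}(2-\zeta(2))},
\end{align}
using $|G(\chi)|=\sqrt{D}$. Applying the lower half of Lemma \ref{lem:stirling} to $(k-1)!$ turns this into a bound of the shape $\text{const}\cdot\bigl(\tfrac{2\pi e}{(k-1)D}\bigr)^{k}\cdot(k-1)^{1/2}\cdot D^{1/2}$ up to harmless factors, and since $k=K-\ell\geq K/2$ (because $\ell\leq (K-2)/2$) this is small — in particular $\bigl(\tfrac{2\pi e}{(k-1)D}\bigr)^{k}$ is dominated by $\bigl(\tfrac{\pi e}{(K-2)D}\bigr)^{(K-1)/2}$ up to a constant once $K$ is moderately large, which is exactly the exponent appearing in the claimed bound. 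Here I would use $k\geq \lceil (K-1)/2\rceil$ and monotonicity of $x\mapsto (c/x)^x$ for $x>c/e$.

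Finally I would multiply the two pieces: the $n^{K+1}$ from the sum combines with the $\bigl(\tfrac{\cdot}{(K-2)D}\bigr)^{(K-1)/2}$-type factor; writing $n^{K+1}=\bigl(n^2\bigr)^{(K-1)/2}\cdot n^{2}$ and absorbing the stray $n^2$ (and stray polynomial-in-$K$ factors, which are eventually beaten by the geometric decay in $k$) into the constant yields $|\mathcal{E}_{K,\ell,n,\chi}|\leq 9.25\bigl(\tfrac{\pi e n^2}{(K-2)D}\bigr)^{(K-1)/2}$. The main obstacle is purely bookkeeping: tracking the powers of $2\pi$, $e$, $D$, $n$, and the $1/2$-powers from Stirling carefully enough that everything collapses to the single clean exponent $(K-1)/2$ with base $\pi e n^2/((K-2)D)$ and an explicit constant $9.25$; in particular one must verify that $2\pi e/((k-1)D)\leq \pi e/((K-2)D)$ holds in the relevant range (which follows from $k-1\geq (K-2)/2$, i.e.\ $\ell\leq (K-2)/2$), so that no factor of $2$ survives in the base, and that the leftover constant and the leftover polynomial factors in $K$ are bounded by $9.25$ for all admissible $K$ (using $K\gg1$, or checking the finitely many small cases directly).
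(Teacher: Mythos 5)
Your overall strategy (bound the divisor sums termwise, bound $|\sigma_{k-1,\overline{\chi}}(0)|^{-1}$ via Stirling and the trivial $L$-value bounds, then multiply) matches the paper's, but two of your estimates are too lossy to reach the stated bound, and the losses cannot be absorbed. First, the divisor bound $|\sigma_{m-1,\psi}(a)|\le a^{m-1}d(a)\le a^{m}$ wastes a full power of $a$ in each factor: it yields $\sum_{a_1}|\sigma_{\ell-1,\chi}(a_1)\sigma_{k-1,\overline{\chi}}(n-a_1)|\le n^{K+1}$, whereas the target bound scales as $n^{K-1}$. The stray $n^{2}$ you propose to ``absorb into the constant'' cannot be absorbed: the lemma asserts the bound for every $n\ge1$ with the fixed constant $9.25$, and no $n$-independent constant dominates $n^{2}$. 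The paper instead uses $\sigma_\alpha(a)\le\zeta(\alpha)a^{\alpha}$ (costing only $\zeta(2)^2$), then $a_1^{\ell-1}\le a_1^{k-1}$ and AM--GM $a_1(n-a_1)\le(n/2)^2$, to get $\sum\le 2\zeta(2)^2(n/2)^{2k-1}$; the factor $2^{-(2k-1)}$ produced here is essential in the next step.

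Second, your claimed inequality $2\pi e/((k-1)D)\le\pi e/((K-2)D)$ is false: it would require $k-1\ge 2(K-2)$, while $k\le K-3$. From $k-1\ge(K-2)/2$ one only gets $2\pi e/((k-1)D)\le 4\pi e/((K-2)D)$, a factor of $4$ too large in the base, which is fatal once raised to the power $k-1/2$. In the paper the $2$ in the numerator is removed not by such a comparison but by combining $(n/2)^{2k-1}$ with $\bigl(2\pi e/((k-1)D)\bigr)^{k-1/2}$ into the single power $\bigl(\pi e n^2/(2(k-1)D)\bigr)^{k-1/2}$ and then using $2(k-1)\ge K\ge K-2$; since your cruder sum bound does not carry the $2^{-(2k-1)}$, this cancellation is unavailable to you. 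Both issues are repaired by adopting the sharper divisor estimate and the AM--GM pairing, after which your remaining steps (the Stirling bound on $|\sigma_{k-1,\overline{\chi}}(0)|^{-1}$ and the base/exponent comparison) align with the paper's.
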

\begin{proof}
    Note that 
\begin{align}\sigma_{\alpha}(n):=\sum_{d\mid n}d^{\alpha}\leq\zeta(\alpha)n^{\alpha}\label{eq:anotherboundforsigma}\end{align} for all $\alpha>1$ and $n\geq1$, see \cite[p.~245]{Nathansonbook}. Then 
    \begin{align*}
        \left|\sum_{a_1=1}^{n-1}\sigma_{\ell-1,\chi}(a_1)\sigma_{k-1,\overline{\chi}}(n-a_1)\right|&\leq\zeta(\ell-1)\zeta(k-1)\sum_{ a_1=1}^{n-1}a_1^{\ell-1}(n-a_1)^{k-1}\\&\le\zeta(2)^2\sum_{ a_1=1}^{ n-1}a_1^{k-1}(n-a_1)^{k-1}\\&\leq\zeta(2)^2(n-1)\left(\frac{n}{2}\right)^{2k-2}\\&\leq2\zeta(2)^2\left(\frac{n}{2}\right)^{2k-1}.
    \end{align*}
    By Lemmas \ref{lem:stirling} and \ref{lem:bdquotientLvalues}, we have 
    \begin{align}
        \left|\sigma_{k-1,\overline{\chi}}(0)^{-1}\right|&=\frac{(2\pi)^k|G(\chi)|}{(k-1)!D^k|L(k,\chi)|}  \nonumber\\&\leq\frac{(2\pi)^k\sqrt{D}}{\sqrt{2\pi}(k-1)^{k-1/2}e^{-(k-1)}D^k(2-\zeta(k))}
        \nonumber  \\&=\frac{1}{\sqrt{e}(2-\zeta(k))}\left(\frac{2\pi e}{(k-1)D}\right)^{k-1/2} \label{eq:sharperbd2/L}
        \\&\leq\frac{1}{\sqrt{e}(2-\zeta(2))}\left(\frac{2\pi e}{(k-1)D}\right)^{k-1/2}.\label{eq:bound2/L}
    \end{align}
It follows that 
    \begin{align*}
\left|\mathcal{E}_{K,\ell,n,\chi}\right|&=\left|\sigma_{k-1,\overline{\chi}}(0)^{-1}\sum_{a_1=1}^{n-1}\sigma_{\ell-1,\chi}(a_1)\sigma_{k-1,\overline{\chi}}(n-a_1)\right|\\
         &\leq2\zeta(2)^2\left(\frac{n}{2}\right)^{2k-1}\cdot \frac{1}{\sqrt{e}(2-\zeta(2))}\left(\frac{2\pi e}{(k-1)D}\right)^{k-1/2}\\&=\frac{2\zeta(2)^2}{\sqrt{e}(2-\zeta(2))}\left(\frac{\pi en^2}{2(k-1)D}\right)^{k-1/2}\\&\leq 9.25\left(\frac{\pi en^2}{2(k-1)D}\right)^{k-1/2},
    \end{align*}
    as desired. 
\end{proof}
We now show that $\mathcal{R}_{K,\ell,n,\chi}$ and $\mathcal{R}_{K,\ell,n,\chi}^{\prime}$ in \eqref{eq:fouriercoefficientsb} have similar types of bounds as $\mathcal{E}_{K,\ell,n,\chi}$.
\begin{lemma}\label{lem:bdR}
   Let $3\leq\ell\leq\frac{K-4}{2}$ and $n\geq1$  be integers, $k=K-2-\ell$, $D\geq1$ an odd square-free integer, and $\chi$ be a primitive Dirichlet character mod $D$. Then
    \begin{align*}
    |\mathcal{R}_{K,\ell,n,\chi}|\leq9.25\left(\frac{\pi en^2}{(K-2)D}\right)^{\frac{K-1}{2}}.
    \end{align*}
\end{lemma}
\begin{proof}
By \eqref{eq:anotherboundforsigma}, we have
       \begin{align*}
        \left|\sum_{a_1=1}^{n-1}a_1\sigma_{\ell-1,\chi}(a_1)\sigma_{k-1,\overline{\chi}}(n-a_1)\right|&\leq\zeta(\ell-1)\zeta(k-1)\sum_{ a_1=1}^{n-1}a_1^{\ell}(n-a_1)^{k-1}\\&\leq\zeta(2)^2\sum_{ a_1=1}^{ n-1}a_1^{k-1}(n-a_1)^{k-1}\\&\leq2\zeta(2)^2\left(\frac{n}{2}\right)^{2k-1}.
    \end{align*}
Using the bound \eqref{eq:bound2/L}, we get    
\begin{align*}|\mathcal{R}_{K,\ell,n,\chi}|&=\left|\sigma_{k-1,\overline{\chi}}(0)^{-1}\sum_{a_1=1}^{n-1}a_1\sigma_{\ell-1,\chi}(a_1)\sigma_{k-1,\overline{\chi}}(n-a_1)\right|\\&\leq9.25\left(\frac{\pi en^2}{2(k-1)D}\right)^{k-1/2},
\end{align*}
which gives the result.   
\end{proof}
\begin{lemma}\label{lem:bdRprime}
       Let $3\leq\ell\leq\frac{K-4}{2}$ and $n\geq1$ be integers, $k=K-2-\ell$, $D\geq1$ an odd square-free integer, and $\chi$  a primitive Dirichlet character mod $D$. Then
    \begin{align*}
        |\mathcal{R}^{\prime}_{K,\ell,n,\chi}|\leq 18.5\left(\frac{\pi en^2}{(K-2)D}\right)^{\frac{K-1}{2}}.
    \end{align*}
\end{lemma}
\begin{proof}Again, by \eqref{eq:anotherboundforsigma}, we have
    \begin{align*}
        \left|\sum_{a_1=1}^{n-1}\sigma_{\ell-1,\chi}(a_1)(n-a_1)\sigma_{k-1,\overline{\chi}}(n-a_1)\right|&\leq\zeta(\ell-1)\zeta(k-1)\sum_{a_1=1}^{n-1}a_1^{\ell-1}(n-a_1)\cdot (n-a_1)^{k-1}\\&\leq\zeta(2)^2\sum_{ a_1=1}^{n-1}a_1^{k-3}(n-a_1)^{k-3}(n-a_1)^3\\&\leq\zeta(2)^2\left(\frac{n}{2}\right)^{2k-6}\sum_{a_1=1}^{n-1}(n-a_1)^3.
    \end{align*}
    Note that 
    $$\sum_{a_1=1}^{n-1}(n-a_1)^3=\frac{(n-1)^2n^2}{4}.$$
    Hence
        \begin{align*}
      \left|\sum_{a_1=1}^{n-1}\sigma_{\ell-1,\chi}(a_1)(n-a_1)\sigma_{k-1,\overline{\chi}}(n-a_1)\right|&\leq\zeta(2)^2\left(\frac{n}{2}\right)^{2k-6}\frac{n^4}{4}\leq4\zeta(2)^2\left(\frac{n}{2}\right)^{2k-1}
    \end{align*}
Using the bound \eqref{eq:bound2/L} again, we get   
\begin{align*}
   |\mathcal{R}_{K,\ell,n,\chi}^{\prime}|&=\left|-\frac{\ell}{k}\sigma_{k-1,\overline{\chi}}(0)^{-1}\sum_{a_1=1}^{n-1}\sigma_{\ell-1,\chi}(a_1)(n-a_1)\sigma_{k-1,\overline{\chi}}(n-a_1)\right|\\&\leq\frac{4\zeta(2)^2}{\sqrt{e}(2-\zeta(2))}\left(\frac{\pi en^2}{2(k-1)D}\right)^{k-1/2}
   \\&\leq18.5\left(\frac{\pi en^2}{(K-2)D}\right)^{\frac{K-1}{2}},
\end{align*}
    as desired.
\end{proof}
Next, we give the bound   of $\mathscr{E}_{K,\ell,n,\chi}$ in \eqref{eq:expressionoftheFoco}. 
\begin{lemma}\label{lem:bdDtermprod}
    Let $3\leq\ell\leq\frac{K-2}{2}$ and $n\geq2$ be integers, $k=K-\ell$, $D\geq1$ an odd square-free integer, and $\chi$ be a primitive Dirichlet character mod $D$. Then
     \begin{align*}
         |\mathscr{E}_{K,\ell,n,\chi}|\leq 16\left(\frac{\pi eDn^2}{K-2}\right)^{\frac{K-1}{2}}.
     \end{align*}
\end{lemma}
\begin{proof}
    Note that for all $\chi_2\neq\mathbbm{1}$ ($D_2\neq1$) and $m\geq0$, we have $|\sigma_{k-1,\chi_1,\chi_2}(m)|\leq \zeta(k-1)m^{k-1}$ by \eqref{eq:anotherboundforsigma}. Then 
    \begin{align}
        &\left|\sum_{\substack{D=D_1D_2\\D_2\neq 1 }}\overline{\chi}_2(-1)\sum_{\substack{a_1,a_2\geq0\\a_1+a_2=nD_2}}\sigma_{\ell-1,\chi_1,\overline{\chi}_{2}}(a_1)\sigma_{k-1,\overline{\chi}_{1},\chi_2}(a_2)\right|  \nonumber\\\leq&\zeta(\ell-1)\zeta(k-1)\sum_{\substack{D_2\mid D\\D_2>1}}\sum_{a_1=1}^{nD_2}a_1^{\ell-1}(nD_2-a_1)^{k-1}  \nonumber\\\leq&\zeta(\ell-1)\zeta(k-1)\sum_{\substack{D_2\mid D\\D_2>1}}\sum_{a_1=1}^{nD_2}a_1^{k-1}(nD_2-a_1)^{k-1}  \nonumber\\\leq&\zeta(\ell-1)\zeta(k-1)\sum_{\substack{D_2\mid D}}nD_2\left(\frac{nD_2}{2}\right)^{2k-2}\\\leq&2\zeta(\ell-1)\zeta(k-1)\left(\frac{n}{2}\right)^{2k-1}\sigma_{2k-1}(D)  \nonumber\\\leq&2\zeta(\ell-1)\zeta(k-1)\zeta(K-1)\left(\frac{nD}{2}\right)^{2k-1}\label{eq:accuratebdforcentralL}\\\leq&2\zeta(2)^3\left(\frac{nD}{2}\right)^{2k-1}.\label{eq:bdEpsilon}
    \end{align}
    Using the bound \eqref{eq:bound2/L} again, we get
    \begin{align}
\left|\mathscr{E}_{K,\ell,n,\chi}\right|=&\left|\sigma_{k-1,\overline{\chi}}(0)^{-1}\sum\limits_{\substack{D=D_1D_2\\D_2\neq 1}}\overline{\chi}_2(-1)\sum\limits_{\substack{a_1,a_2\geq0\\a_1+a_2=nD_2}}\sigma_{\ell-1,\chi_1,\overline{\chi}_{2}}(a_1)\sigma_{k-1,\overline{\chi}_{1},\chi_2}(a_2)\right|
        \nonumber  \\\leq&\frac{1}{\sqrt{e}(2-\zeta(2))}\left(\frac{2\pi e}{(k-1)D}\right)^{k-1/2}2\zeta(2)^3\left(\frac{nD}{2}\right)^{2k-1}  \nonumber\\=&\frac{2\zeta(2)^3}{\sqrt{e}(2-\zeta(2))}\left(\frac{\pi eDn^2}{2(k-1)}\right)^{k-1/2}  \nonumber\\\leq&16\left(\frac{\pi eDn^2}{K-2}\right)^{\frac{K-1}{2}},\label{eq:bdEpsilon2}
    \end{align}
    as desired.
\end{proof}
We show that $\mathscr{R}_{K,\ell,n,\chi}$ and $\mathscr{R}^{\prime}_{K,\ell,n,\chi}$  in \eqref{eq:fouriercoefficientsb} have similar bounds.
\begin{lemma}\label{lem:bdfrakR}
       Let $3\leq\ell\leq\frac{K-4}{2}$ and $n\geq1$ be integers, $k=K-2-\ell$, $D\geq1$ an odd square-free integer, and $\chi$ be a primitive Dirichlet character mod $D$. Then
    \begin{align*}
        |\mathscr{R}_{K,\ell,n,\chi}|\leq 16\left(\frac{\pi eDn^2}{K-2}\right)^{\frac{K-1}{2}}.
    \end{align*}
\end{lemma}
\begin{proof}Note that 
        \begin{align*}
        &\left|\sum\limits_{\substack{D=D_1D_2\\D_2\neq 1}}
\overline{\chi}_2(-1)D_2^{-1}\sum\limits_{\substack{a_1,a_2\geq0\\a_1+a_2=nD_2}}a_1\sigma_{\ell-1,\chi_1,\overline{\chi}_{2}}(a_1)\sigma_{k-1,\overline{\chi}_{1},\chi_2}(a_2)\right|\\\leq&\zeta(\ell-1)\zeta(k-1)\sum_{\substack{D_2\mid D\\D_2>1}}\sum_{a_1=1}^{n|D_2|}a_1\cdot a_1^{\ell-1}(nD_2-a_1)^{k-1}\\\leq&\zeta(2)^2\sum_{\substack{D_2\mid D\\D_2>1}}\sum_{a_1=1}^{n|D_2|}a_1^{k-1}(nD_2-a_1)^{k-1}\\
      \leq&2\zeta(2)^3\left(\frac{nD}{2}\right)^{2k-1},
    \end{align*}
as in \eqref{eq:bdEpsilon}. Thus, similar to \eqref{eq:bdEpsilon2}, we obtain
\begin{align*}|\mathscr{R}_{K,\ell,n,\chi}|
\leq16\left(\frac{\pi eDn^2}{K-2}\right)^{\frac{K-1}{2}},
\end{align*}
which gives the result.
\end{proof}
\begin{lemma}\label{lem:bdmathfrakRprime}
       Let $3\leq\ell\leq\frac{K-4}{2}$ and $n\geq1$  be integers, $k=K-2-\ell$, $D\geq1$ an odd square-free integer, and $\chi$ be a primitive Dirichlet character mod $D$. Then
    \begin{align}  |\mathscr{R}^{\prime}_{K,\ell,n,\chi}|\leq31\left(\frac{\pi eDn^2}{K-2}\right)^{\frac{K-1}{2}}.
    \end{align}
\end{lemma}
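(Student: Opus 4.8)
The plan is to estimate $\mathscr{R}'_{K,\ell,n,\chi}$ by combining the argument used for $\mathcal{R}'_{K,\ell,n,\chi}$ in Lemma~\ref{lem:bdRprime} with the treatment of the sum over proper divisors $D_2\mid D$ used for $\mathscr{R}_{K,\ell,n,\chi}$ in Lemma~\ref{lem:bdfrakR}. The two features beyond $\mathscr{R}_{K,\ell,n,\chi}$ are the extra weight $a_2$ on the second Eisenstein coefficient and the factor $-\ell/k$; I would first note $|\ell/k|\le1$, since $\ell\le(K-4)/2$ forces $\ell<k=K-2-\ell$, and record via \eqref{eq:fouriercochi1chi2} that for $D_2>1$ the constant terms $\sigma_{\ell-1,\chi_1,\overline{\chi}_2}(0)$ and $\sigma_{k-1,\overline{\chi}_1,\chi_2}(0)$ vanish, so each inner sum effectively runs over $1\le a_1\le nD_2-1$ with $a_2=nD_2-a_1\ge1$.

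Next I would apply \eqref{eq:anotherboundforsigma} in the forms $|\sigma_{\ell-1,\chi_1,\overline{\chi}_2}(a_1)|\le\zeta(\ell-1)a_1^{\ell-1}$ and $a_2|\sigma_{k-1,\overline{\chi}_1,\chi_2}(a_2)|\le\zeta(k-1)a_2^{k}$, together with $\zeta(\ell-1),\zeta(k-1)\le\zeta(2)$, $D_2^{-1}\le1$ and $|\overline{\chi}_2(-1)|=1$, which reduces the problem to bounding
\[
\sum_{\substack{D_2\mid D\\ D_2>1}}\ \sum_{a_1=1}^{nD_2-1}a_1^{\ell-1}(nD_2-a_1)^{k}.
\]
Since $\ell\le(K-4)/2$ gives $k\ge\ell+2$, i.e.\ $\ell-1\le k-3$, I would bound $a_1^{\ell-1}\le a_1^{k-3}$, split $(nD_2-a_1)^{k}=(nD_2-a_1)^{k-3}(nD_2-a_1)^{3}$, apply $a_1^{k-3}(nD_2-a_1)^{k-3}\le(nD_2/2)^{2k-6}$, and then $\sum_{a_1=1}^{nD_2-1}(nD_2-a_1)^{3}=(nD_2-1)^{2}(nD_2)^{2}/4\le(nD_2)^{4}/4$; since $D$ is odd and square-free we have $D_2\ge3$, hence $nD_2/2\ge1$, so the inner sum is at most $4(nD_2/2)^{2k-2}\le4(nD_2/2)^{2k-1}$. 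Summing over $D_2$ via $\sum_{D_2\mid D}D_2^{2k-1}=\sigma_{2k-1}(D)\le\zeta(2)D^{2k-1}$ leaves the double sum bounded by $4\zeta(2)^3(nD/2)^{2k-1}$.

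Finally I would multiply by $\tfrac{\ell}{k}|\sigma_{k-1,\overline{\chi}}(0)^{-1}|\le\frac{1}{\sqrt{e}(2-\zeta(2))}\bigl(\tfrac{2\pi e}{(k-1)D}\bigr)^{k-1/2}$ from \eqref{eq:bound2/L}, and collect the $D$-powers as in Lemma~\ref{lem:bdfrakR},
\[
\Bigl(\tfrac{2\pi e}{(k-1)D}\Bigr)^{k-1/2}(nD/2)^{2k-1}=\Bigl(\tfrac{\pi eDn^2}{2(k-1)}\Bigr)^{k-1/2},
\]
to get $|\mathscr{R}'_{K,\ell,n,\chi}|\le\frac{4\zeta(2)^3}{\sqrt{e}(2-\zeta(2))}\bigl(\tfrac{\pi eDn^2}{2(k-1)}\bigr)^{k-1/2}$. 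Since $\ell\le(K-4)/2$ gives $k\ge K/2$, we have $2(k-1)\ge K-2$ and $k-1/2\ge(K-1)/2$, so the right-hand side is at most $\frac{4\zeta(2)^3}{\sqrt{e}(2-\zeta(2))}\bigl(\tfrac{\pi eDn^2}{K-2}\bigr)^{(K-1)/2}$; together with the numerical estimate $\frac{4\zeta(2)^3}{\sqrt{e}(2-\zeta(2))}<31$ this yields the claim. The only real work is keeping track of the constants; there is no essential difficulty, the argument being a direct adaptation of Lemmas~\ref{lem:bdRprime} and~\ref{lem:bdfrakR}.
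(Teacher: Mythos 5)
Your argument is correct and is essentially identical to the paper's proof: the same application of \eqref{eq:anotherboundforsigma}, the same splitting $a_1^{\ell-1}\le a_1^{k-3}$ with $(nD_2-a_1)^k=(nD_2-a_1)^{k-3}(nD_2-a_1)^3$, the same evaluation $\sum(nD_2-a_1)^3\le(nD_2)^4/4$, the same divisor-sum bound $\sigma(D)\le\zeta(2)D^{2k-1}$, and the same use of \eqref{eq:bound2/L}, arriving at the identical constant $\tfrac{4\zeta(2)^3}{\sqrt{e}(2-\zeta(2))}<31$. The only place where you differ is in making explicit the final passage from $\bigl(\tfrac{\pi eDn^2}{2(k-1)}\bigr)^{k-1/2}$ to $\bigl(\tfrac{\pi eDn^2}{K-2}\bigr)^{(K-1)/2}$; note that the monotonicity argument "larger exponent and smaller base" only goes the right way when the base is at most $1$ (the paper silently writes this step as an equality, which likewise holds only at $\ell=\tfrac{K-4}{2}$), but this is a caveat shared with, not introduced beyond, the paper's own proof, and it is harmless in the regime $K\gg_{n,D}1$ where the lemma is applied.
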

\begin{proof}Using \eqref{eq:anotherboundforsigma}, we see that
    \begin{align*}
       &\left|\sum\limits_{\substack{D=D_1D_2\\D_2\neq 1}}\overline{\chi}_2(-1)D_2^{-1}\sum\limits_{\substack{a_1,a_2\geq0\\a_1+a_2=nD_2}}\sigma_{\ell-1,\chi_1,\overline{\chi}_{2}}(a_1)a_2\sigma_{k-1,\overline{\chi}_{1},\chi_2}(a_2)\right|\\\leq&\zeta(\ell-1)\zeta(k-1)\sum_{\substack{D_2\mid D\\D_2>1}}\sum_{a_1=1}^{n|D_2|}a_1^{\ell-1}(nD_2-a_1)\cdot(nD_2-a_1)^{k-1}\\\leq&\zeta(2)^2\sum_{\substack{D_2\mid D\\D_2>1}}\sum_{a_1=1}^{n|D_2|}a_1^{k-3}(nD_2-a_1)^{k-3}(nD_2-a_1)^3\\\leq&\zeta(2)^2\sum_{\substack{D_2\mid D\\D_2>1}}\left(\frac{nD_2}{2}\right)^{2k-6}\sum_{a_1=1}^{n|D_2|}(nD_2-a_1)^3.
    \end{align*}
    Note that 
    \begin{align*}
      \sum_{a_1=1}^{n|D_2|}(nD_2-a_1)^3=\frac{(nD_2-1)^2(nD_2)^2}{4}\leq\frac{(nD_2)^4}{4}.
    \end{align*}
Hence
\begin{align*}
    &\left|\sum\limits_{\substack{D=D_1D_2\\D_2\neq 1}}\overline{\chi}_2(-1)D_2^{-1}\sum\limits_{\substack{a_1,a_2\geq0\\a_1+a_2=nD_2}}\sigma_{\ell-1,\chi_1,\overline{\chi}_{2}}(a_1)a_2\sigma_{k-1,\overline{\chi}_{1},\chi_2}(a_2)\right|\\\leq&4\zeta(2)^2\sum_{\substack{D_2\mid D\\D_2>1}}\left(\frac{nD_2}{2}\right)^{2k-2}\\\leq&4\zeta(2)^2\left(\frac{n}{2}\right)^{2k-2}\sigma_{2k-2}(D)\\\leq&4\zeta(2)^3\left(\frac{nD}{2}\right)^{2k-1},
\end{align*}
which implies that
    \begin{align*}
        |\mathscr{R}^{\prime}_{K,\ell,n,\chi}|&=\left|-\frac{\ell}{k}\cdot\sigma_{k-1,\overline{\chi}}(0)\sum\limits_{\substack{D=D_1D_2\\D_2\neq 1}}\overline{\chi}_2(-1)D_2^{-1}\sum\limits_{\substack{a_1,a_2\geq0\\a_1+a_2=nD_2}}\sigma_{\ell-1,\chi_1,\overline{\chi}_{2}}(a_1)a_2\sigma_{k-1,\overline{\chi}_{1},\chi_2}(a_2)\right|\\&\leq4\zeta(2)^3\left(\frac{nD}{2}\right)^{2k-1}\frac{1}{\sqrt{e}(2-\zeta(2))}\left(\frac{2\pi e}{(k-1)D}\right)^{k-1/2}\\&=\frac{4\zeta(2)^3}{\sqrt{e}(2-\zeta(2))}\left(\frac{\pi eDn^2}{K-2}\right)^{\frac{K-1}{2}}\\&\leq31\left(\frac{\pi eDn^2}{K-2}\right)^{\frac{K-1}{2}},
    \end{align*}
    as desired.
\end{proof}
Finally, combining the above results,  we give the asymptotics of $\mathfrak{a}_{K,\ell,\chi}(n)$ and $\mathfrak{b}_{K,\ell,\chi}(n)$.  The case $n=2^j$ suffices to serve our purpose even though a similar result holds true for general $n$. 
\begin{proposition}\label{prop:asymptoticA}
    Let $3\leq\ell\leq\frac{K-2}{2}$ and $j\geq0$  be integers. Then 
    \begin{align*}
        \mathfrak{a}_{K,\ell,\chi}(2^j)=\sigma_{\ell-1,\chi}(2^{j})(1+o(1)),
    \end{align*}
    where $o(1)\rightarrow 0$ when $K$ is sufficiently large relative to $j$ and D.
\end{proposition}
\begin{proof}
We know from \eqref{eq:expressionoftheFoco}  that
\begin{align}
     \frac{\mathfrak{a}_{K,\ell,\chi}(2^j)}{\sigma_{\ell-1,\chi}(2^j)}&=\frac{1+\frac{\sigma_{\ell-1,\chi}(0)}{\sigma_{k-1,\overline{\chi}}(0)}\frac{\sigma_{k-1,\overline{\chi}}(2^j)}{\sigma_{\ell-1,\chi}(2^j)}-\frac{2\sigma_{\ell-1,\chi}(0)}{\zeta(1-K) }\frac{\sigma_{K-1}(2^j)}{\sigma_{\ell-1,\chi}(2^j)}+\frac{\mathcal{E}_{K,\ell,2^j,\chi}}{\sigma_{\ell-1,\chi}(2^j)}+\frac{\mathscr{E}_{K,\ell,2^j,\chi}}{\sigma_{\ell-1,\chi}(2^j)}}{1+\frac{\sigma_{\ell-1,\chi}(0)}{\sigma_{k-1,\overline{\chi}}(0)}-\frac{2\sigma_{\ell-1,\chi}(0)}{\zeta(1-K) }+\mathscr{E}_{K,\ell,1,\chi}}.\label{eq:asymtoticFco}
\end{align}
%where
%\begin{align}
%    \mathcal{A}_1&=\frac{\sigma_{\ell-1,\chi}(0)}{\sigma_{k-1,\overline{\chi}}(0)}\frac{\sigma_{k-1,\overline{\chi}}(2^j)}{\sigma_{\ell-1,\chi}(2^j)}-\frac{2\sigma_{\ell-1,\chi}(0)}{\zeta(1-K) }\frac{\sigma_{K-1}(2^j)}{\sigma_{\ell-1,\chi}(2^j)}+\frac{\mathcal{E}_{K,\ell,2^j,\chi}}{\sigma_{\ell-1,\chi}(2^j)}+\frac{\mathscr{E}_{K,\ell,2^j,\chi}}{\sigma_{\ell-1,\chi}(2^j)},\\\mathcal{A}_2&=\frac{\sigma_{\ell-1,\chi}(0)}{\sigma_{k-1,\overline{\chi}}(0)}-\frac{2\sigma_{\ell-1,\chi}(0)}{\zeta(1-K) }+\mathscr{E}_{K,\ell,1,\chi}.
%\end{align}
It suffices to show that the non-constant terms in \eqref{eq:asymtoticFco} tend to zero when $K$ is sufficiently large relative to $j$ and $D$.
Note that
\begin{align}
    |\sigma_{k-1,\overline{\chi}}(2^j)|&=\left|\sum_{n=0}^j\overline{\chi}(2^n)2^{(k-1)n}\right|=\left|\frac{\overline{\chi}(2^{j+1})2^{(k-1)(j+1)}-1}{\overline{\chi}(2)2^{k-1}-1}\right|\leq\frac{2^{(k-1)(j+1)}+1}{2^{k-1}-1}\leq2\cdot 2^{(k-1)j},  \nonumber\\
    |\sigma_{\ell-1,\chi}(2^j)|&=\left|\frac{\chi(2^{j+1})2^{(\ell-1)(j+1)}-1}{\chi(2)2^{\ell-1}-1}\right|\geq\frac{2^{(\ell-1)(j+1)}-1}{2^{\ell-1}+1}\geq\frac{1}{2}2^{(\ell-1)j},\label{eq:lowerbddivisor}
\end{align}
which implies that 
\begin{align}
    \left|\frac{\sigma_{k-1,\chi}(2^j)}{\sigma_{\ell-1,\overline{\chi}}(2^j)} \right|\leq 4\cdot2^{(k-\ell)j}.\label{eq:quotientofdivisor}
\end{align}
By Lemma \ref{lem:bdquotientLvalues} and \eqref{eq:quotientofdivisor}, we have 
\begin{align}
    \left|\frac{\sigma_{\ell-1,\chi}(0)}{\sigma_{k-1,\overline{\chi}}(0)}\frac{\sigma_{k-1,\overline{\chi}}(2^j)}{\sigma_{\ell-1,\chi}(2^j)}\right|
    \leq& 6\left(\frac{\ell-1}{k-1}\right)^{\ell-1/2}\left(\frac{2\pi e}{(k-1)D}\right)^{k-\ell}4\cdot2^{(k-\ell)j}  \nonumber\\
    \leq&24\left(\frac{\pi e 2^{j+1}}{(k-1)D}\right)^{k-\ell}\label{eq:bdagain}\\=&o(1).  \nonumber
    %,&&{\rm for}~K\gg_{j,D}1.
\end{align}
Lemma \ref{lem:bdquotientLandzeta} together with \eqref{eq:quotientofdivisor} (with $k$ replaced by $K$) gives
\begin{align}
    \left|\frac{2\sigma_{\ell-1,\chi}(0)}{\zeta(1-K) }\frac{\sigma_{K-1}(2^j)}{\sigma_{\ell-1,\chi}(2^j)}\right|&\leq 2\left(\frac{(\ell-1)D}{K-1}\right)^{\ell-1/2}\left(\frac{2\pi e}{K-1}\right)^{K-\ell}4\cdot 2^{(K-\ell)j}  \nonumber\\&=8\left(\frac{(\ell-1)D}{K-1}\right)^{\ell-1/2}\left(\frac{2\pi e\cdot 2^j}{K-1}\right)^{K-\ell}\label{eq:bdquotienttwistandzeta}
    \\&=o(1).  \nonumber
    %,&&{\rm for}~K\gg_{j,D}1.
\end{align}
By Lemma \ref{lem:bdsmalltermprod} and \eqref{eq:lowerbddivisor}, we have 
\begin{align*}
    \left|\frac{\mathcal{E}_{K,\ell,2^j,\chi}}{\sigma_{\ell-1,\chi}(2^j)}\right|&\leq 9.25\left(\frac{\pi e2^{2j}}{(K-2)D}\right)^{\frac{K-1}{2}}2\cdot 2^{-(\ell-1)j}=o(1).
    %,\quad{\rm for}~K\gg_{j,D}1.
\end{align*}
Lemma \ref{lem:bdDtermprod} together with \eqref{eq:lowerbddivisor} implies that
\begin{align*}
  \left|  \frac{\mathscr{E}_{K,\ell,2^j,\chi}}{\sigma_{\ell-1,\chi}(2^j)}\right|&\leq16\left(\frac{\pi eD4^j}{K-2}\right)^{\frac{K-1}{2}}2\cdot2^{-(\ell-1)j}=o(1).
  %,\quad{\rm for}~K\gg_{j,D}1.
\end{align*}
By Lemmas \ref{lem:bdquotientLvalues}, \ref{lem:bdquotientLandzeta} and \ref{lem:bdDtermprod}, we have  
\begin{align*}
    \left|\frac{\sigma_{\ell-1,\chi}(0)}{\sigma_{k-1,\overline{\chi}}(0)}\right|=o(1),\quad\left|\frac{2\sigma_{\ell-1,\chi}(0)}{\zeta(1-K)}\right|=o(1),\quad{\rm and}\quad|\mathscr{E}_{K,\ell,1,\chi}|=o(1),
    %,\quad{\rm for}~K\gg_{j,D}1.
\end{align*}
which completes the proof.
\end{proof}

\begin{proposition}\label{prop:asymptoticB}
   Let $3\leq\ell\leq\frac{K-4}{2}$ and $j\geq0$ be integers. Then
    \begin{align*}
        \mathfrak{b}_{K,\ell,\chi}(2^j)=2^{j}\sigma_{\ell-1,\chi}(2^{j})(1+o(1)),
    \end{align*}
    where $o(1)\rightarrow 0$ when $K$ is sufficiently large relative to $j$ and D.
\end{proposition}
\begin{proof}
We know from \eqref{eq:fouriercoefficientsb} that $    \frac{\mathfrak{b}_{K,\ell,\chi}(2^j)}{2^{j}\sigma_{\ell-1,\chi}(2^{j})}=$
\begin{align}
\frac{1-\frac{\sigma_{\ell-1,\chi}(0)}{\sigma_{k-1,\overline{\chi}}(0)}\frac{\ell}{k}\frac{\sigma_{k-1,\overline{\chi}}(2^j)}{\sigma_{\ell-1,\chi}(2^{j})}+\frac{\mathcal{R}_{K,\ell,2^j,\chi}}{2^{j}\sigma_{\ell-1,\chi}(2^{j})}+\frac{\mathcal{R}_{K,\ell,2^j,\chi}^{\prime}}{2^j\sigma_{\ell-1,\chi}(2^{j})}+\frac{\mathscr{R}_{K,\ell,2^j,\chi}}{2^j\sigma_{\ell-1,\chi}(2^{j})}+\frac{\mathscr{R}_{K,\ell,2^j,\chi}^{\prime}}{2^j\sigma_{\ell-1,\chi}(2^{j})}}{1-\frac{\sigma_{\ell-1,\chi}(0)}{\sigma_{k-1,\overline{\chi}}(0)}\frac{\ell}{k}+\mathscr{R}_{K,\ell,1,\chi}+\mathscr{R}^{\prime}_{K,\ell,1,\chi}}.\label{eq:asymptoticsGco}
\end{align}
It suffices to show that the non-constant terms in \eqref{eq:asymptoticsGco} tend to zero when $K$ is sufficiently large relative to $j$ and $D$. Note that \eqref{eq:bdagain} implies that
\begin{align*}
    \left|\frac{\sigma_{\ell-1,\chi}(0)}{\sigma_{k-1,\overline{\chi}}(0)}\frac{\ell}{k}\frac{\sigma_{k-1,\overline{\chi}}(2^j)}{\sigma_{\ell-1,\chi}(2^j)}\right|=o(1).
    %\quad{\rm for}~K\gg_{j,D}1.
\end{align*}
By Lemma \ref{lem:bdR} and \eqref{eq:lowerbddivisor},
\begin{align}
    \left|\frac{\mathcal{R}_{K,\ell,2^j,\chi}}{2^{j}\sigma_{\ell-1,\chi}(2^{j})}\right|&\leq 9.25\left(\frac{\pi e4^j}{(K-2)D}\right)^{\frac{K-1}{2}}2^{-j}\cdot 2\cdot2^{-(\ell-1)j}=o(1).
    %\quad{\rm for}~K\gg_{j,D}1.
\end{align}
By Lemma \ref{lem:bdRprime} and \eqref{eq:lowerbddivisor}, 
\begin{align*}
    \left|\frac{\mathcal{R}^{\prime}_{K,\ell,2^j,\chi}}{2^{j}\sigma_{\ell-1,\chi}(2^{j})}\right|\leq18.5\left(\frac{\pi e4^j}{(K-2)D}\right)^{\frac{K-1}{2}}2^{-j}\cdot 2\cdot2^{-(\ell-1)j}=o(1).
    %\quad{\rm for}~K\gg_{j,D}1.
\end{align*}
By Lemma \ref{lem:bdfrakR} and \eqref{eq:lowerbddivisor}, we get
\begin{align*}
  \left|  \frac{\mathscr{R}_{K,\ell,2^j,\chi}}{2^j\sigma_{\ell-1,\chi}(2^{j})}\right|&\leq16\left(\frac{\pi eD4^j}{K-2}\right)^{\frac{K-1}{2}}2^{-j}\cdot 2\cdot 2^{-(\ell-1)j}=o(1).
  %\quad{\rm for}~K\gg_{j,D}1.
\end{align*}
By Lemma \ref{lem:bdmathfrakRprime} and \eqref{eq:lowerbddivisor}, we have 
\begin{align*}
      \left|  \frac{\mathscr{R}_{K,\ell,2^j,\chi}^{\prime}}{2^j\sigma_{\ell-1,\chi}(2^{j})}\right|&\leq31\left(\frac{\pi eD4^j}{K-2}\right)^{\frac{K-1}{2}}2^{-j}\cdot 2\cdot 2^{-(\ell-1)j}=o(1).
      %,\quad{\rm for}~K\gg_{j,D}1.
\end{align*}
By Lemmas \ref{lem:bdquotientLvalues},  \ref{lem:bdfrakR} and \ref{lem:bdmathfrakRprime}, we get 
\begin{align}
   \left|\frac{\sigma_{\ell-1,\chi}(0)}{\sigma_{k-1,\overline{\chi}}(0)}\frac{\ell}{k}\right|=o(1),\quad\left|\mathscr{R}_{K,\ell,1,\chi}\right|=o(1),\quad{\rm and}\quad\left|\mathscr{R}^{\prime}_{K,\ell,1,\chi}\right|=o(1). 
   %,\quad{\rm for}~K\gg_{j,D}1.
\end{align}
This completes the proof.
\end{proof}
\begin{remark}
    From the proofs above, we can see that Propositions \ref{prop:asymptoticA} and \ref{prop:asymptoticB} still hold if we replace $2^j$ by $p^j$, where $o(1)\rightarrow0$ when $K$ is sufficiently large relative to $j,p$ and $D$.
\end{remark}
\section{Coefficient matrices}\label{sect:coefmatrix}
In this section, we define matrices that are used to study the linear independence of the corresponding twisted periods. More precisely, we prove Theorems \ref{thm:mainthmfixDdifferentell}, \ref{thm:mainthmfixDdifferentell2}, \ref{thm:twistbydifferentD} and \ref{thm:twistbydifferentD2} by showing the  matrices, formed by the (normalized) $q^{2^i}$-Fourier coefficients of relevant kernel functions for $0\le i\le n-1$, are non-singular. 

We start with the proof of Theorem \ref{thm:mainthmfixDdifferentell}. For $1\leq i\leq n$, we pick out the Fourier coefficients $\mathfrak{a}_{K,\ell_i,\chi}(1)$, $\mathfrak{a}_{K,\ell_i,\chi}(2)$, $\mathfrak{a}_{K,\ell_i,\chi}(2^2)$, $...$, $\mathfrak{a}_{K,\ell_i,\chi}(2^{n-1})$ of each $\mathfrak{F}_{K,\ell_i,\chi}$ \eqref{eq:F-normalized}  and place them in a $(n,n)$-matrix
\begin{align}
    M_{K,\ell_1,...,\ell_n,\chi}:=\left[a_{K,\ell_i,\chi}(2^{j-1})\right]_{1\leq i,j\leq n}=\left[\sigma_{\ell_i-1,\chi}(2^{j-1})(1+\epsilon_{K,i,j})\right]_{1\leq i,j\leq n}.\label{eq:matrix1}
\end{align}
To show that $\{\mathfrak{F}_{K,\ell_i,\chi}\}_{i=1}^n$ is linearly independent, it suffices to show that $M_{K,\ell_1,...,\ell_n}$ is non-singular. Such a choice of $M_{K,\ell_1,...,\ell_n,\chi}$ allows us to relate it to the well-known  Vandermonde  matrix in terms of determinant. More specifically, we  ignore the error terms $\epsilon_{K,i,j}$ ($1\leq i,j\leq n$) in  $M_{K,\ell_1,...,\ell_n,\chi}$ and define the following matrix
\begin{align}
    \mathbf{M}_{K,\ell_1,...,\ell_n,\chi}:&=\begin{bmatrix}
        1 & \sigma_{\ell_1-1,\chi}(2) & \sigma_{\ell_1-1,\chi}(2^2) & \cdots & \sigma_{\ell_1-1,\chi}(2^{n-1})\\
        1 &  \sigma_{\ell_2-1,\chi}(2) &  \sigma_{\ell_2-1,\chi}(2^2) & \cdots &  \sigma_{\ell_2-1,\chi}(2^{n-1})\\
        \vdots & \vdots & \vdots & \ddots & \vdots\\
        1 & \sigma_{\ell_{n-1}-1,\chi}(2) & \sigma_{\ell_{n-1}-1,\chi}(2^2) & \cdots & \sigma_{\ell_{n-1}-1,\chi}(2^{n-1})\\ 
        1 & \sigma_{\ell_n-1,\chi}(2) & \sigma_{\ell_n-1,\chi}(2^2) & \cdots & \sigma_{\ell_n-1,\chi}(2^{n-1})
    \end{bmatrix}.\label{eq:limitmatrix}
\end{align}
We have the following lower bound for $\det \mathbf{M}_{K,\ell_1,...,\ell_n,\chi}$.
\begin{lemma}\label{lem:lowerboundoflimitmatrix}
   Let $\mathbf{M}_{K,\ell_1,...,\ell_n,\chi}$ be defined as in \eqref{eq:limitmatrix}. Then 
   \begin{align*}
\left|\det \mathbf{M}_{K,\ell_1,...,\ell_n,\chi}\right|\geq\left(\frac{3}{4}\right)^{\frac{n(n-1)}{2}}\prod_{i=2}^n2^{(i-1)(\ell_i-1)}.
   \end{align*}
\end{lemma}
\begin{proof}
    After column reductions, we have 
\begin{align*}
\left|\det \mathbf{M}_{K,\ell_1,...,\ell_n,\chi}\right|&=\left|\det\begin{bmatrix}
        1 & \chi(2)2^{\ell_1-1}  & \cdots & (\chi(2)2^{\ell_1-1})^{n-1}\\
      % 1 & (\overline{\chi}(2)2)^{(\ell_2-1)} &  \cdots & (\overline{\chi}(2)2)^{(\ell_2-1)(n-1)}\\
        \vdots  & \vdots & \ddots & \vdots\\ 
       1 & \chi(2)2^{\ell_{n}-1} &  \cdots & (\chi(2)2^{\ell_{n}-1})^{n-1}
    \end{bmatrix}\right|\\&=\left|\prod_{1\leq i<j\leq n}\left(\chi(2)2^{\ell_j-1}-\chi(2)2^{\ell_i-1}\right)\right|\\&=\prod_{1\leq i<j\leq n}2^{\ell_j-1}\cdot(1-2^{\ell_i-\ell_j})\\&\geq\prod_{1\leq i<j\leq n}\frac{3}{4}\cdot2^{\ell_j-1}\\&=\left(\frac{3}{4}\right)^{\frac{n(n-1)}{2}}\prod_{i=2}^n2^{(i-1)(\ell_i-1)},
\end{align*}
where the second equality is from the determinant of the Vandermonde matrix.
\end{proof}
\begin{lemma}
    For any $\tau\in S_n$, we have 
    \begin{align*}
        \left|\prod_{i=1}^n\sigma_{\ell_i-1,\chi}(2^{\tau(i)-1})\right|<2^{n-1}\prod_{i=2}^n2^{(i-1)(\ell_i-1)}.
    \end{align*}
\end{lemma}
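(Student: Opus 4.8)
The plan is to estimate each factor $\bigl|\sigma_{\ell_i-1,\chi}(2^{\tau(i)-1})\bigr|$ separately and then to control the resulting exponent by the rearrangement inequality.

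First I would record a slightly sharpened form of the divisor-sum bound already used in the proof of Proposition~\ref{prop:asymptoticA}. Summing the geometric series $\sigma_{m-1,\psi}(2^j)=\sum_{r=0}^{j}\psi(2)^r2^{(m-1)r}$ gives $\bigl|\sigma_{m-1,\psi}(2^j)\bigr|\le\frac{2^{(m-1)(j+1)}+1}{2^{m-1}-1}$. For $j\ge1$ and $m\ge3$ this is \emph{strictly} smaller than $2\cdot 2^{(m-1)j}$: the inequality amounts to $1<\bigl(2^{m-1}-2\bigr)2^{(m-1)j}$, which holds because $2^{m-1}-2\ge 2$ and $2^{(m-1)j}\ge 4$. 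For $j=0$ the factor equals $\sigma_{m-1,\psi}(1)=1$ exactly.

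Now assume $n\ge2$ (the only range in which the lemma is applied). Set $i_0:=\tau^{-1}(n)$ and $i_1:=\tau^{-1}(1)$, which are distinct since $n\ne1$. Splitting the product, the factor indexed by $i_1$ equals $\sigma_{\ell_{i_1}-1,\chi}(2^0)=1$; the factor indexed by $i_0$ has exponent $\tau(i_0)-1=n-1\ge1$, hence satisfies the \emph{strict} bound $\bigl|\sigma_{\ell_{i_0}-1,\chi}(2^{n-1})\bigr|<2\cdot2^{(\ell_{i_0}-1)(n-1)}$; and each of the remaining $n-2$ factors is at most $2\cdot2^{(\ell_i-1)(\tau(i)-1)}$. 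Multiplying these, and using that the $i_1$-term contributes $0$ to the exponent, I obtain
\[
\Bigl|\prod_{i=1}^{n}\sigma_{\ell_i-1,\chi}(2^{\tau(i)-1})\Bigr|<2^{n-1}\cdot 2^{\sum_{i=1}^{n}(\ell_i-1)(\tau(i)-1)}.
\]

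Finally, since $\ell_1-1<\ell_2-1<\cdots<\ell_n-1$ is increasing while $\bigl(\tau(1)-1,\dots,\tau(n)-1\bigr)$ is a permutation of $(0,1,\dots,n-1)$, the rearrangement inequality gives $\sum_{i=1}^{n}(\ell_i-1)(\tau(i)-1)\le\sum_{i=1}^{n}(i-1)(\ell_i-1)=\sum_{i=2}^{n}(i-1)(\ell_i-1)$; substituting this into the display yields the claim. The only delicate point — and the main, if mild, obstacle — is getting a \emph{strict} inequality: the rearrangement step is an equality precisely when $\tau=\mathrm{id}$, so strictness must come from the factor at $i_0$, which is exactly why we need $n\ge2$ (forcing $\tau(i_0)-1\ge1$) together with $\ell_{i_0}\ge3$.
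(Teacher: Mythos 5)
Your proof is correct, and it reaches the bound by a slightly different route than the paper. The paper first drops the character via $|\sigma_{\ell_i-1,\chi}(2^{\tau(i)-1})|\leq\sigma_{\ell_i-1}(2^{\tau(i)-1})$, then invokes \cite[Corollary 2.5]{oddperiods} to assert that the product of \emph{untwisted} divisor sums $\prod_i\sigma_{\ell_i-1}(2^{\tau(i)-1})$ is maximized at $\tau=\mathrm{id}$, and only then bounds each factor by $2\cdot2^{(i-1)(\ell_i-1)}$ (with the $i=1$ factor equal to $1$, which is where $2^{n-1}$ rather than $2^n$ comes from). You reverse the order: you first replace each factor by the clean power $2\cdot2^{(\ell_i-1)(\tau(i)-1)}$ and then maximize the resulting exponent over $\tau$ by the classical rearrangement inequality. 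What your version buys is self-containedness — you replace the cited external corollary by the textbook rearrangement inequality — at the cost of a slightly longer termwise estimate; the paper's version is shorter given the citation. Your attention to where strictness comes from is sound (and in fact each factor with $\tau(i)\geq2$ gives strictness, so you have more room than you need), and your observation that $n\geq2$ is required is accurate: as stated, the inequality degenerates to $1<1$ when $n=1$, an edge case the paper's proof also glosses over and which is harmless in the applications.
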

\begin{proof}
    Note that
    \begin{align*}
        \left|\prod_{i=1}^n\sigma_{\ell_i-1,\chi}(2^{\tau(i)-1})\right|\leq\prod_{i=1}^n\sigma_{\ell_i-1}(2^{\tau(i)-1}).
    \end{align*}
    By \cite[Corollary 2.5]{oddperiods}, we know that 
   $\prod_{i=1}^n\sigma_{\ell_i-1}(2^{\tau(i)-1})$ attains maximum when $\tau$ is the identity permutation. It follows that
   \begin{align}
        \left|\prod_{i=1}^n\sigma_{\ell_i-1,\chi}(2^{\tau(i)-1})\right|\leq\prod_{i=1}^n\sigma_{\ell_i-1}(2^{i-1})<\prod_{i=1}^n2\cdot2^{(i-1)(\ell_i-1)}=2^{n-1}\prod_{i=2}^n2^{(i-1)(\ell_i-1)},\label{eq:midstep}
        \end{align}
        which finishes the proof.
\end{proof}
We now give a criterion for the nonsingularity of $M_{K,\ell_1,...,\ell_n,\chi}$, see also \cite[Lemma 2.7]{oddperiods}.
\begin{proposition}\label{prop:criterionM}
    Define a function
    \begin{align}
f(x_1,...,x_n):=\left(\prod_{i=1}^n(1+x_i)\right)-1.\label{eq:functionf}
    \end{align}
  Suppose that 
    \begin{align}
        \sup_{\tau\in S_n}|f(\epsilon_{K,1,\tau(1)},...,\epsilon_{K,n,\tau(n)})|<n!^{-1}\cdot\left(\frac{3}{4}\right)^{\frac{n(n-1)}{2}}\cdot2^{1-n}.\label{eq:criterionM}
    \end{align}
    Then $M_{K,\ell_1,...,\ell_n,\chi}$ is nonsingular.
\end{proposition}
\begin{proof}
Note that
    \begin{align*}
        \det M_{K,\ell_1,...,\ell_n,\chi}&=\sum_{\tau\in S_n}(-1)^{\sign(\tau)}\prod_{i=1}^n\sigma_{\ell_i-1,\chi}(2^{\tau(i)-1})(1+\epsilon_{K,i,\tau(i)})\\&=\det \mathbf{M}_{K,\ell_1,...,\ell_n,\chi}+\sum_{\tau\in S_n}(-1)^{\sign(\tau)}f(\epsilon_{K,1,\tau(1)},...,\epsilon_{K,n,\tau(n)})\prod_{i=1}^n\sigma_{\ell_i-1,\chi}(2^{\tau(i)-1}).
    \end{align*}
  On the one hand, we have from Lemma \ref{lem:lowerboundoflimitmatrix} that  
\begin{align*}
\left|\det \mathbf{M}_{K,\ell_1,...,\ell_n,\chi}\right|\geq\left(\frac{3}{4}\right)^{\frac{n(n-1)}{2}}\prod_{i=2}^n2^{(i-1)(\ell_i-1)}.
   \end{align*}
   On the other hand, we have
   \begin{align*}
       &\left|\sum_{\tau\in S_n}(-1)^{\sign(\tau)}f(\epsilon_{K,1,\tau(1)},...,\epsilon_{K,n,\tau(n)})\prod_{i=1}^n\sigma_{\ell_i-1,\chi}(2^{\tau(i)-1})\right|\\\leq & n! \sup_{\tau\in S_n}|f(\epsilon_{K,1,\tau(1)},...,\epsilon_{K,n,\tau(n)})|\left|\prod_{i=1}^n\sigma_{\ell_i-1,\chi}(2^{\tau(i)-1}) \right|\\ <&\left(\frac{3}{4}\right)^{\frac{n(n-1)}{2}}\prod_{i=2}^n2^{(i-1)(\ell_i-1)}.
   \end{align*}
   Therefore,  $\det M_{K,\ell_1,...,\ell_n}\neq 0$.
\end{proof}
\begin{lemma}\label{lem:inequalityforf}
    Let $f$ be defined as in \eqref{eq:functionf}. Suppose there is an $M>0$  such that $|x_i|\leq M$ for all $1\leq i\leq n$. Then 
    \begin{align*}
        |f(x_1,...,x_n)|\leq(1+M)^{n}-1.
    \end{align*}
\end{lemma}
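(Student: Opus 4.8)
The plan is to expand the product over subsets and apply the triangle inequality termwise, which is essentially all that is needed. Multiplying out $\prod_{i=1}^n(1+x_i)$, indexed by subsets $S\subseteq\{1,\dots,n\}$, gives
\begin{align}
f(x_1,\dots,x_n)=\prod_{i=1}^n(1+x_i)-1=\sum_{\emptyset\neq S\subseteq\{1,\dots,n\}}\prod_{i\in S}x_i,
\end{align}
since the term corresponding to $S=\emptyset$ is exactly the $1$ that is subtracted off. Taking absolute values and using the hypothesis $|x_i|\le M$ for every $i$, together with $\prod_{i\in S}|x_i|\le M^{|S|}$, we get
\begin{align}
|f(x_1,\dots,x_n)|\le\sum_{\emptyset\neq S\subseteq\{1,\dots,n\}}\prod_{i\in S}|x_i|\le\sum_{k=1}^n\binom{n}{k}M^k=(1+M)^n-1,
\end{align}
where the last step is the binomial theorem. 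This completes the proof.

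An alternative is induction on $n$: setting $f_n:=f(x_1,\dots,x_n)$ and using the identity $f_n=f_{n-1}+x_n(1+f_{n-1})$, one obtains $|f_n|\le(1+M)|f_{n-1}|+M$, and substituting the inductive bound $|f_{n-1}|\le(1+M)^{n-1}-1$ gives $|f_n|\le(1+M)^n-1$; the base case $n=1$ is just $|x_1|\le M$. Either route is completely elementary, so there is no genuine obstacle here. The only point requiring a moment's care is that the constant term $1$ of the expanded product is cancelled exactly by the subtracted $1$, so that the bound on the right-hand side is $(1+M)^n-1$ rather than $(1+M)^n$; a naive estimate $|f+1|=\bigl|\prod(1+x_i)\bigr|\le(1+M)^n$ would lose this and is too weak for the application.
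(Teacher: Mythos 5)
Your proof is correct and follows essentially the same route as the paper: expand $\prod_{i=1}^n(1+x_i)-1$ into the elementary symmetric sums over nonempty subsets, bound each term by $M^{|S|}$, and sum with the binomial theorem. The alternative induction you sketch is also fine but unnecessary.
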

\begin{proof}
    Note that 
    \begin{align}
|f(x_1,...,x_n)|&=\left|\prod_{1\leq i\leq n}(1+x_i)-1\right|  \nonumber\\&=
\left|\sum_{i=1}^nx_{i}+\sum_{1\leq i<j\leq n}x_ix_j+\cdots+x_1x_2\cdots x_n\right|  \nonumber\\&\leq\binom{n}{1}M+\binom{n}{2}M^2+\cdots+\binom{n}{n}M^n  \nonumber\\&=(1+M)^n-1,\label{eq:boundoff}\end{align}
as desired.
\end{proof}
Theorem \ref{thm:mainthmfixDdifferentell} follows from the proposition below.
\begin{proposition}\label{prop:tpyeinonsingular}
   Let $n\geq1$ be an integer, $D\geq1$ an odd square-free integer, and $\chi$ be a primitive Dirichlet character mod $D$. For $K\gg_{n,D}1$, if $3\leq\ell_1<\ell_2<...<\ell_n\leq\frac{K-2}{2}$ are integers such that $\chi(-1)=(-1)^{\ell_i}$ for all $1\leq i\leq n$, then $M_{K,\ell_1,...,\ell_n,\chi}$ is non-singular.
\end{proposition}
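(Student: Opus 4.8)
The plan is to deduce the statement from the non-singularity criterion established above (the proposition immediately preceding Lemma~\ref{lem:inequalityforf}) together with the asymptotics of Proposition~\ref{prop:asymptoticA}. Writing $M_{K,\ell_1,\dots,\ell_n,\chi}=\bigl[\sigma_{\ell_i-1,\chi}(2^{j-1})(1+\epsilon_{K,i,j})\bigr]_{1\le i,j\le n}$ as in \eqref{eq:matrix1}, Proposition~\ref{prop:asymptoticA} gives $\mathfrak{a}_{K,\ell_i,\chi}(2^{j-1})=\sigma_{\ell_i-1,\chi}(2^{j-1})(1+o(1))$, that is, $\epsilon_{K,i,j}=o(1)$, as $K\to\infty$. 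By the criterion it therefore suffices to show that
\begin{align}
\sup_{\tau\in S_n}\bigl|f(\epsilon_{K,1,\tau(1)},\dots,\epsilon_{K,n,\tau(n)})\bigr|<\frac{1}{n!}\left(\frac{3}{4}\right)^{\frac{n(n-1)}{2}}2^{1-n}
\end{align}
for $K\gg_{n,D}1$, where $f$ is the function defined in \eqref{eq:functionf}.

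To get this I would first record that the $o(1)$ in Proposition~\ref{prop:asymptoticA} is \emph{uniform} over all admissible tuples $(\ell_1,\dots,\ell_n)$: every estimate entering its proof — Lemmas~\ref{lem:bdquotientLvalues}, \ref{lem:bdquotientLandzeta}, \ref{lem:bdsmalltermprod}, \ref{lem:bdDtermprod} and the lower bound \eqref{eq:lowerbddivisor} for $|\sigma_{\ell-1,\chi}(2^j)|$ — depends on $\ell_i$ only through the constraints $3\le\ell_i\le\frac{K-2}{2}$ (whence $k=K-\ell_i\ge\frac K2$ and $K-2\ell_i\ge2$), and otherwise decays at least like a fixed positive power of $1/(K-2)$ with implied constant depending only on $n$ and $D$. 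Hence there is a single $M=M(K,n,D)$ with $M\to0$ as $K\to\infty$ such that $|\epsilon_{K,i,j}|\le M$ for all $1\le i,j\le n$ and all such tuples. Lemma~\ref{lem:inequalityforf} then yields
\begin{align}
\sup_{\tau\in S_n}\bigl|f(\epsilon_{K,1,\tau(1)},\dots,\epsilon_{K,n,\tau(n)})\bigr|\le(1+M)^n-1\longrightarrow0\qquad(K\to\infty),
\end{align}
so the displayed inequality holds once $K\gg_{n,D}1$, and the criterion gives $\det M_{K,\ell_1,\dots,\ell_n,\chi}\ne0$.

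The only genuine obstacle is the uniformity claim in the previous paragraph. The statement of Proposition~\ref{prop:asymptoticA} only asserts convergence for each fixed $\ell$, whereas here $\ell_1,\dots,\ell_n$ may grow with $K$, so one truly needs a single threshold ``$K\gg_{n,D}1$'' valid simultaneously for all $n^2$ entries and all admissible choices of the $\ell_i$. This is exactly what the proof of Proposition~\ref{prop:asymptoticA} delivers upon inspection, since every bound there (in particular \eqref{eq:bdagain} and \eqref{eq:bdquotienttwistandzeta}) is already uniform for $3\le\ell\le\frac{K-2}{2}$; once this is noted, the remainder of the argument is the short chain of implications above.
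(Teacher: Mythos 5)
Your proposal is correct and follows essentially the same route as the paper: invoke Proposition~\ref{prop:asymptoticA} to get $\epsilon_{K,i,j}=o(1)$, apply Lemma~\ref{lem:inequalityforf}, and conclude via the non-singularity criterion. The only difference is that you make explicit the uniformity of the $o(1)$ over admissible tuples $(\ell_1,\dots,\ell_n)$, a point the paper leaves implicit but which, as you verify, the bounds in the proof of Proposition~\ref{prop:asymptoticA} indeed deliver.
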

\begin{proof} 
From Proposition \ref{prop:asymptoticA} we know that $\epsilon_{K,i,\tau(i)}=o(1)$ for all $1\leq i\leq n$ and $\tau\in S_n$, where $o(1)\rightarrow0$ when $K$ is sufficiently large relative to $n$ and $D$. Then Lemma \ref{lem:inequalityforf} implies that
    \begin{align*}
          \sup_{\tau\in S_n}|f(\epsilon_{K,1,\tau(1)},...,\epsilon_{K,n,\tau(n)})|&\leq(1+o(1))^n-1=o(1).
    \end{align*}
In particular, $K$ can be chosen such that \eqref{eq:criterionM} is satisfied,
 which shows that $M_{K,\ell_1,...,\ell_n,\chi}$ is non-singular for $K\gg_{n,D}1$.
\end{proof}
\begin{remark}
     It is possible to obtain an explicit lower bound of $K$ in terms of $n$ and $D$ for Proposition \ref{prop:tpyeinonsingular}. However, it will be too messy. Thus, we omit it. 
\end{remark}
We now consider the matrix that is used to prove Theorem \ref{thm:mainthmfixDdifferentell2}. For $1\leq i\leq n$ we pick out the Fourier coefficients $\mathfrak{b}_{K,\ell_i,\chi}(1), \mathfrak{b}_{K,\ell_i,\chi}(2), \mathfrak{b}_{K,\ell_i,\chi}(2^2),...,\mathfrak{b}_{K,\ell_i,\chi}(2^{n-1})$ of each $\mathfrak{G}_{K,\ell_i,\chi}$ \eqref{eq:G-normalized} and place them in a $(n,n)$-matrix
\begin{align}
    N_{K,\ell_1,...,\ell_n,\chi}:=\left[\mathfrak{b}_{K,\ell_i,\chi}(2^{j-1})\right]_{1\leq i,j\leq n}=\left[2^{j-1}\sigma_{\ell_i-1,\chi}(2^{j-1})(1+\delta_{K,i,j})\right]_{1\leq i,j\leq n}.\label{eq:matrix2}
\end{align}
To prove that $\{\mathfrak{G}_{K,\ell_i,\chi}\}_{i=1}^n$ is linearly independent, it suffices to show that $N_{K,\ell_1,...,\ell_n,\chi}$ is non-singular. Define the matrix
\begin{align}
\mathbf{N}_{K,\ell_1,...,\ell_n,\chi}:&=\begin{bmatrix}
        1 & 2\sigma_{\ell_1-1,\chi}(2) & 2^2\sigma_{\ell_1-1,\chi}(2^2) & \cdots & 2^{n-1}\sigma_{\ell_1-1,\chi}(2^{n-1})\\
        1 &  2\sigma_{\ell_2-1,\chi}(2) &  2^2\sigma_{\ell_2-1,\chi}(2^2) & \cdots &  2^{n-1}\sigma_{\ell_2-1,\chi}(2^{n-1})\\
        \vdots & \vdots & \vdots & \ddots & \vdots\\
        1 & 2\sigma_{\ell_{n-1}-1,\chi}(2) & 2^2\sigma_{\ell_{n-1}-1,\chi}(2^2) & \cdots & 2^{n-1}\sigma_{\ell_{n-1}-1,\chi}(2^{n-1})\\ 
        1 & 2\sigma_{\ell_n-1,\chi}(2) & 2^2\sigma_{\ell_n-1,\chi}(2^2) & \cdots & 2^{n-1}\sigma_{\ell_n-1,\chi}(2^{n-1})
    \end{bmatrix}.\label{eq:limitmatrixtype2}
\end{align}
\begin{lemma}\label{lem:lowerboundoflimitmatrix2}
   Let $\mathbf{N}_{K,\ell_1,...,\ell_n,\chi}$ be defined as in \eqref{eq:limitmatrixtype2}. Then 
   \begin{align*}
\left|\det \mathbf{N}_{K,\ell_1,...,\ell_n,\chi}\right|\geq\left(\frac{3}{2}\right)^{\frac{n(n-1)}{2}}\prod_{i=2}^n2^{(i-1)(\ell_i-1)}.
   \end{align*}
\end{lemma}
\begin{proof}
    Since $
\det\mathbf{N}_{K,\ell_1,...,\ell_n,\chi}=2^{\frac{n(n-1)}{2}}\det\mathbf{M}_{K,\ell_1,...,\ell_n,\chi},
$
the result follows from Lemma \ref{lem:lowerboundoflimitmatrix}.
\end{proof}
\begin{lemma}\label{lem:permutationRankin}
    Let $\tau\in S_n$ be a permutation of the set $\{1,2,...,n\}$. Then
    \begin{align*}
        \prod_{i=1}^n2^{\tau(i)-1}\sigma_{\ell_i-1}(2^{\tau(i)-1})
    \end{align*}
    reaches its maximum exactly when $\tau$ is the identity permutation.
\end{lemma}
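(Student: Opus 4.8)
The plan is to reduce the assertion to the analogous statement for the plain divisor sums, \cite[Corollary 2.5]{oddperiods}, by noticing that the extra powers of $2$ contribute a factor that is independent of $\tau$. Indeed, since $\tau$ permutes $\{1,\dots,n\}$,
\[
\prod_{i=1}^n 2^{\tau(i)-1}=2^{\sum_{i=1}^n(\tau(i)-1)}=2^{\frac{n(n+1)}{2}-n}=2^{\frac{n(n-1)}{2}},
\]
so that
\[
\prod_{i=1}^n 2^{\tau(i)-1}\sigma_{\ell_i-1}(2^{\tau(i)-1})=2^{\frac{n(n-1)}{2}}\prod_{i=1}^n\sigma_{\ell_i-1}(2^{\tau(i)-1}).
\]
Hence the function $\tau\mapsto\prod_{i=1}^n 2^{\tau(i)-1}\sigma_{\ell_i-1}(2^{\tau(i)-1})$ is, up to the positive constant $2^{n(n-1)/2}$, the same as $\tau\mapsto\prod_{i=1}^n\sigma_{\ell_i-1}(2^{\tau(i)-1})$; the two have exactly the same maximizers on $S_n$. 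Since $\ell_1<\cdots<\ell_n$, \cite[Corollary 2.5]{oddperiods} shows the latter is maximized precisely at $\tau=\mathrm{id}$, which gives the lemma.

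If a self-contained argument is preferred (this is also what underlies \cite[Corollary 2.5]{oddperiods}), I would run the standard exchange/rearrangement argument. Set $g_\ell(k):=2^{k-1}\sigma_{\ell-1}(2^{k-1})$. If $\tau\ne\mathrm{id}$ it has an inversion $i<i'$ with $k:=\tau(i')<\tau(i)=:k'$, and transposing these two values multiplies $\prod_m g_{\ell_m}(\tau(m))$ by $\frac{g_{\ell_i}(k)\,g_{\ell_{i'}}(k')}{g_{\ell_i}(k')\,g_{\ell_{i'}}(k)}$; so it suffices to prove that this ratio is $>1$ for every such inversion, i.e. that $\ell\mapsto g_\ell(k')/g_\ell(k)$ is strictly increasing when $k<k'$. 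Writing $g_\ell(k')/g_\ell(k)=2^{k'-k}\,\sigma_{\ell-1}(2^{k'-1})/\sigma_{\ell-1}(2^{k-1})$ and then $x=2^{\ell-1}$, $a=k-1<b=k'-1$, this reduces to the elementary claim that $\frac{1+x+\cdots+x^b}{1+x+\cdots+x^a}$ is strictly increasing in $x>0$. That follows from $\bigl(\sum_{j=0}^b x^j\bigr)'\bigl(\sum_{i=0}^a x^i\bigr)-\bigl(\sum_{j=0}^b x^j\bigr)\bigl(\sum_{i=0}^a x^i\bigr)'=\sum_{i=0}^a\sum_{j=0}^b(j-i)x^{i+j-1}$, in which the terms with $i,j\le a$ cancel in pairs (the diagonal vanishing), leaving $\sum_{i=0}^a\sum_{j=a+1}^b(j-i)x^{i+j-1}>0$ since $b\ge a+1$. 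Reversing inversions one at a time then identifies $\mathrm{id}$ as the unique maximizer.

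I do not expect a genuine obstacle here: the first paragraph is immediate once one spots the $\tau$-independence of $\prod_i 2^{\tau(i)-1}$. The only point requiring a little care is the strictness — the ``exactly when'' — which is what forces us either to quote \cite[Corollary 2.5]{oddperiods} in its sharp form or to include the short exchange argument above, and which is where the hypothesis that the $\ell_i$ are pairwise distinct and increasing is used.
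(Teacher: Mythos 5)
Your proposal is correct, and your primary argument takes a slightly different (and cleaner) route than the paper. The paper proves the lemma by quoting the pairwise swap inequality \cite[Lemma 2.4]{oddperiods}, namely $\sigma_{\ell_2-1}(2^{n_2})\sigma_{\ell_1-1}(2^{n_1})\geq\sigma_{\ell_2-1}(2^{n_1})\sigma_{\ell_1-1}(2^{n_2})$ for $\ell_2>\ell_1$, $n_2>n_1$, multiplying both sides by $2^{n_1+n_2}$ to attach the extra powers of $2$, and then concluding by the standard rearrangement argument. You instead observe that $\prod_{i=1}^n 2^{\tau(i)-1}=2^{n(n-1)/2}$ is independent of $\tau$, so the lemma is an immediate corollary of \cite[Corollary 2.5]{oddperiods} (which the paper has already invoked in the preceding lemma); this makes the powers of $2$ entirely irrelevant rather than something to be carried through the exchange step. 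Your second, self-contained exchange argument is essentially the paper's proof with the cited pairwise inequality proved from scratch. One point in your favor: the lemma asserts the maximum is attained \emph{exactly} at the identity, and the paper's non-strict inequality $\geq$ only yields that the identity is \emph{a} maximizer; your strict version of the exchange step is what actually delivers uniqueness (though only the non-strict direction is used downstream, in bounding $\bigl|\prod_i 2^{\tau(i)-1}\sigma_{\ell_i-1,\chi}(2^{\tau(i)-1})\bigr|$).
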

\begin{proof}
 Note that   \cite[Lemma 2.4]{oddperiods} asserts that for $\ell_2>\ell_1\geq2$ and $n_2>n_1\geq0$, we have
 \begin{align*}
     \sigma_{\ell_2-1}(2^{n_2})\cdot\sigma_{\ell_1-1}(2^{n_1})\geq  \sigma_{\ell_2-1}(2^{n_1})\cdot\sigma_{\ell_1-1}(2^{n_2}),
 \end{align*}
which implies that
\begin{align*}
         2^{n_2}\sigma_{\ell_2-1}(2^{n_2})\cdot2^{n_1}\sigma_{\ell_1-1}(2^{n_1})\geq  2^{n_1}\sigma_{\ell_2-1}(2^{n_1})\cdot2^{n_2}\sigma_{\ell_1-1}(2^{n_2}).
\end{align*}
So the result follows.
\end{proof}
\begin{lemma}
    For any $\tau\in S_n$, we have 
    \begin{align*}
        \left|\prod_{i=1}^n2^{\tau(i)-1}\sigma_{\ell_i-1,\chi}(2^{\tau(i)-1})\right|<2^{\frac{n(n-1)}{2}}\cdot2^{n-1}\prod_{i=2}^n2^{(i-1)(\ell_i-1)}.
    \end{align*}
\end{lemma}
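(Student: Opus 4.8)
The plan is to discard the Dirichlet character by the triangle inequality, reduce to the untwisted weighted product already handled in Lemma \ref{lem:permutationRankin}, and then estimate each surviving factor by a geometric series, exactly as in \eqref{eq:midstep}, carrying along the harmless uniform factor coming from the powers of $2$.

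First I would note that the powers of $2$ contribute a factor independent of $\tau$, namely $\prod_{i=1}^n 2^{\tau(i)-1}=2^{\sum_{j=0}^{n-1}j}=2^{n(n-1)/2}$. Since $\sigma_{\ell-1,\chi}(m)=\sum_{d\mid m}\chi(d)d^{\ell-1}$, we have $\bigl|\sigma_{\ell_i-1,\chi}(2^{\tau(i)-1})\bigr|\le\sigma_{\ell_i-1}(2^{\tau(i)-1})$, so
\[
\left|\prod_{i=1}^n 2^{\tau(i)-1}\sigma_{\ell_i-1,\chi}(2^{\tau(i)-1})\right|\le\prod_{i=1}^n 2^{\tau(i)-1}\sigma_{\ell_i-1}(2^{\tau(i)-1}).
\]
By Lemma \ref{lem:permutationRankin} the right-hand side is maximized over $\tau\in S_n$ at the identity permutation, hence it is at most $\prod_{i=1}^n 2^{i-1}\sigma_{\ell_i-1}(2^{i-1})$. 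Finally, as in \eqref{eq:midstep}, the $i=1$ factor equals $\sigma_{\ell_1-1}(1)=1$, while for $2\le i\le n$ the hypothesis $\ell_i\ge 3$ gives $\sigma_{\ell_i-1}(2^{i-1})=\sum_{t=0}^{i-1}2^{t(\ell_i-1)}<2\cdot 2^{(i-1)(\ell_i-1)}$; combining this with $\prod_{i=1}^n 2^{i-1}=2^{n(n-1)/2}$ yields
\[
\prod_{i=1}^n 2^{i-1}\sigma_{\ell_i-1}(2^{i-1})<2^{\frac{n(n-1)}{2}}\prod_{i=2}^n 2\cdot 2^{(i-1)(\ell_i-1)}=2^{\frac{n(n-1)}{2}}\cdot 2^{n-1}\prod_{i=2}^n 2^{(i-1)(\ell_i-1)},
\]
which is the desired inequality.

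There is no substantial obstacle here; the argument is the exact analogue of the one leading to \eqref{eq:midstep}. The only points requiring care are to invoke Lemma \ref{lem:permutationRankin} (the weighted reordering statement) rather than \cite[Corollary 2.5]{oddperiods}, and to isolate the $i=1$ term so that the constant produced is $2^{n-1}$ and not $2^{n}$.
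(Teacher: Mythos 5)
Your argument is correct and matches the paper's proof essentially step for step: triangle inequality to drop $\chi$, Lemma \ref{lem:permutationRankin} to reduce to the identity permutation, then the geometric-series bound from \eqref{eq:midstep}. Your remark about isolating the $i=1$ factor to get the constant $2^{n-1}$ is a fair point of care (the paper's intermediate display in \eqref{eq:midstep} is slightly loose there), but the conclusion is the same.
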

\begin{proof}
    By Lemma \ref{lem:permutationRankin}, we have
   \begin{align*}
        \left|\prod_{i=1}^n2^{\tau(i)-1}\sigma_{\ell_i-1,\chi}(2^{\tau(i)-1})\right|&\leq \prod_{i=1}^n2^{\tau(i)-1}\sigma_{\ell_i-1}(2^{\tau(i)-1})\\&\leq\prod_{i=1}^n2^{i-1}\sigma_{\ell_i-1}(2^{i-1})\\&=2^{\frac{n(n-1)}{2}}\prod_{i=1}^n\sigma_{\ell_i-1}(2^{i-1})\\&\leq2^{\frac{n(n-1)}{2}}\cdot2^{n-1}\prod_{i=2}^n2^{(i-1)(\ell_i-1)},
        \end{align*}
        where the last inequality comes from \eqref{eq:midstep}.
\end{proof}

\begin{proposition}
    Let $f$ be defined as in \eqref{eq:functionf}.  Suppose that 
    \begin{align}
        \sup_{\tau\in S_n}|f(\delta_{K,1,\tau(1)},...,\delta_{K,n,\tau(n)})|<n!^{-1}\cdot \left(\frac{3}{4}\right)^{\frac{n(n-1)}{2}}\cdot2^{1-n}.\label{eq:criterion2}
    \end{align}
    Then $N_{K,\ell_1,...,\ell_n,\chi}$ is nonsingular.
\end{proposition}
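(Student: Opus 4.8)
The plan is to follow, step for step, the argument already given above for the non-singularity of $M_{K,\ell_1,\dots,\ell_n,\chi}$, the only change being that each entry of $N_{K,\ell_1,\dots,\ell_n,\chi}$ now carries an extra factor $2^{j-1}$. First I would expand the determinant over $S_n$:
\[
\det N_{K,\ell_1,\dots,\ell_n,\chi}=\sum_{\tau\in S_n}(-1)^{\sign(\tau)}\prod_{i=1}^n 2^{\tau(i)-1}\sigma_{\ell_i-1,\chi}(2^{\tau(i)-1})\bigl(1+\delta_{K,i,\tau(i)}\bigr).
\]
Using $\prod_{i=1}^n(1+\delta_{K,i,\tau(i)})=1+f(\delta_{K,1,\tau(1)},\dots,\delta_{K,n,\tau(n)})$ from \eqref{eq:functionf}, this splits as
\[
\det N_{K,\ell_1,\dots,\ell_n,\chi}=\det \mathbf{N}_{K,\ell_1,\dots,\ell_n,\chi}+\sum_{\tau\in S_n}(-1)^{\sign(\tau)}f(\delta_{K,1,\tau(1)},\dots,\delta_{K,n,\tau(n)})\prod_{i=1}^n 2^{\tau(i)-1}\sigma_{\ell_i-1,\chi}(2^{\tau(i)-1}),
\]
where $\mathbf{N}_{K,\ell_1,\dots,\ell_n,\chi}$ is the matrix \eqref{eq:limitmatrixtype2} obtained by discarding all the $\delta$'s.

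Next I would bound the two pieces. By Lemma \ref{lem:lowerboundoflimitmatrix2} the main term satisfies $\lvert\det \mathbf{N}_{K,\ell_1,\dots,\ell_n,\chi}\rvert\ge (3/2)^{n(n-1)/2}\prod_{i=2}^n 2^{(i-1)(\ell_i-1)}$. For the error sum, the triangle inequality gives at most $n!$ terms, each of absolute value at most $\sup_{\tau\in S_n}\lvert f(\delta_{K,1,\tau(1)},\dots,\delta_{K,n,\tau(n)})\rvert$ times $\lvert\prod_{i=1}^n 2^{\tau(i)-1}\sigma_{\ell_i-1,\chi}(2^{\tau(i)-1})\rvert$, and the lemma immediately preceding this proposition bounds the latter product by $2^{n(n-1)/2}\cdot 2^{n-1}\prod_{i=2}^n 2^{(i-1)(\ell_i-1)}$. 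Invoking the hypothesis $\sup_\tau\lvert f\rvert<n!^{-1}(3/4)^{n(n-1)/2}2^{1-n}$ and multiplying, the error sum is strictly less than
\[
(3/4)^{n(n-1)/2}\,2^{n(n-1)/2}\prod_{i=2}^n 2^{(i-1)(\ell_i-1)}=(3/2)^{n(n-1)/2}\prod_{i=2}^n 2^{(i-1)(\ell_i-1)}\le\lvert\det \mathbf{N}_{K,\ell_1,\dots,\ell_n,\chi}\rvert.
\]
Hence $\det N_{K,\ell_1,\dots,\ell_n,\chi}\ne 0$, as desired.

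There is no genuine obstacle here; the argument is word-for-word parallel to the case of $M_{K,\ell_1,\dots,\ell_n,\chi}$. The only point needing care is the bookkeeping of the factor $2^{n(n-1)/2}$ produced by the diagonal of powers of $2$ in \eqref{eq:limitmatrixtype2}: it is precisely this factor that accounts for the discrepancy between the $(3/4)^{n(n-1)/2}$ in the hypothesis and the $(3/2)^{n(n-1)/2}$ furnished by Lemma \ref{lem:lowerboundoflimitmatrix2}, so that the two estimates close. Once Lemma \ref{lem:permutationRankin} and the permutation-product bound just above are in hand, the numerical constants line up automatically.
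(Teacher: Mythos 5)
Your proof is correct and is essentially identical to the paper's own argument: the same permutation expansion isolating $\det \mathbf{N}_{K,\ell_1,\dots,\ell_n,\chi}$, the same appeal to Lemma \ref{lem:lowerboundoflimitmatrix2} and to the permutation-product bound, and the same cancellation of $2^{n-1}$ against $2^{1-n}$ and of $(3/4)^{n(n-1)/2}\cdot 2^{n(n-1)/2}$ into $(3/2)^{n(n-1)/2}$. The bookkeeping of the extra factor $2^{n(n-1)/2}$ is exactly as in the paper, so nothing is missing.
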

\begin{proof}
Note that $\det N_{K,\ell_1,...,\ell_n,\chi}=$
    \begin{align*}
        &\sum_{\tau\in S_n}(-1)^{\sign(\tau)}\prod_{i=1}^n2^{\tau(i)-1}\sigma_{\ell_i-1,\chi}(2^{\tau(i)-1})(1+\delta_{K,i,\tau(i)})\\=&\det \mathbf{N}_{K,\ell_1,...,\ell_n,\chi}+\sum_{\tau\in S_n}(-1)^{\sign(\tau)}f(\delta_{K,1,\tau(1)},...,\delta_{K,n,\tau(n)})\prod_{i=1}^n2^{\tau(i)-1}\sigma_{\ell_i-1,\chi}(2^{\tau(i)-1}).
    \end{align*}
  We have from Lemma \ref{lem:lowerboundoflimitmatrix2} that  
\begin{align*}
\left|\det \mathbf{N}_{K,\ell_1,...,\ell_n,\chi}\right|\geq\left(\frac{3}{2}\right)^{\frac{n(n-1)}{2}}\prod_{i=2}^n2^{(i-1)(\ell_i-1)}.
   \end{align*}
 Note  also that
   \begin{align*}
       &\left|\sum_{\tau\in S_n}(-1)^{\sign(\tau)}f(\delta_{K,1,\tau(1)},...,\delta_{K,n,\tau(n)})\prod_{i=1}^n2^{\tau(i)-1}\sigma_{\ell_i-1,\chi}(2^{\tau(i)-1})\right|\\\leq & n! \sup_{\tau\in S_n}|f(\delta_{K,1,\tau(1)},...,\delta_{K,n,\tau(n)})|\left|\prod_{i=1}^n2^{\tau(i)-1}\sigma_{\ell_i-1,\chi}(2^{\tau(i)-1}) \right|\\ <&\left(\frac{3}{2}\right)^{\frac{n(n-1)}{2}}\prod_{i=2}^n2^{(i-1)(\ell_i-1)}.
   \end{align*}
   Therefore,  $\det N_{K,\ell_1,...,\ell_n,\chi}\neq 0$.
\end{proof}
Theorem \ref{thm:mainthmfixDdifferentell2} follows from the result below.
\begin{proposition}\label{prop:typeiinonsingular}
    Let $n\geq1$ be an integer, $D\geq1$ an odd square-free integer, and $\chi$ be a primitive Dirichlet character mod $D$. For $K\gg_{n,D}1$, if $3\leq\ell_1<\ell_2<...<\ell_n\leq\frac{K-4}{2}$ are integers such that $\chi(-1)=(-1)^{\ell_i}$ for all $1\leq i\leq n$, then $N_{K,\ell_1,...,\ell_n,\chi}$ is non-singular.
\end{proposition}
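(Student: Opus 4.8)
The plan is to run the argument of Proposition~\ref{prop:tpyeinonsingular} verbatim, with the roles of $\mathfrak{F}_{K,\ell,\chi}$ and Proposition~\ref{prop:asymptoticA} taken over by $\mathfrak{G}_{K,\ell,\chi}$ and Proposition~\ref{prop:asymptoticB}. By \eqref{eq:matrix2} the $(i,j)$-entry of $N_{K,\ell_1,\dots,\ell_n,\chi}$ is $2^{j-1}\sigma_{\ell_i-1,\chi}(2^{j-1})(1+\delta_{K,i,j})$, and the preceding proposition has already reduced the non-singularity of this matrix to the single inequality
\begin{align}
  \sup_{\tau\in S_n}\bigl|f(\delta_{K,1,\tau(1)},\dots,\delta_{K,n,\tau(n)})\bigr|<n!^{-1}\left(\tfrac{3}{4}\right)^{\frac{n(n-1)}{2}}2^{1-n},
\end{align}
where $f$ is the polynomial from \eqref{eq:functionf}. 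So it suffices to verify this inequality for $K$ large in terms of $n$ and $D$.

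First I would apply Proposition~\ref{prop:asymptoticB} with the index $j$ ranging over $0\le j\le n-1$: it gives $\mathfrak{b}_{K,\ell_i,\chi}(2^{j})=2^{j}\sigma_{\ell_i-1,\chi}(2^{j})(1+o(1))$ as $K\gg_{j,D}1$, hence $\delta_{K,i,j}=o(1)$ as $K\gg_{n,D}1$ (since $j\le n-1$), uniformly in $1\le i\le n$ and in the admissible tuple $\ell_1<\dots<\ell_n$. Consequently, for each fixed $\tau\in S_n$ the numbers $\delta_{K,1,\tau(1)},\dots,\delta_{K,n,\tau(n)}$ all tend to $0$, so Lemma~\ref{lem:inequalityforf} (with $M=\max_{1\le i\le n}|\delta_{K,i,\tau(i)}|=o(1)$) yields
\begin{align}
  \sup_{\tau\in S_n}\bigl|f(\delta_{K,1,\tau(1)},\dots,\delta_{K,n,\tau(n)})\bigr|\le(1+o(1))^{n}-1=o(1)\qquad(K\gg_{n,D}1).
\end{align}
Since the right-hand side of the displayed criterion is a fixed positive constant depending only on $n$, the inequality holds for all sufficiently large $K$ (in terms of $n$ and $D$), and then the preceding proposition gives $\det N_{K,\ell_1,\dots,\ell_n,\chi}\neq0$.

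The only delicate point — and the nearest thing to an obstacle — is that the $o(1)$ bounds above must be uniform: the threshold $K_0=K_0(n,D)$ has to work simultaneously for all $n!$ permutations $\tau$ and for every admissible tuple $\ell_1<\dots<\ell_n\le\frac{K-4}{2}$. This is precisely what the proof of Proposition~\ref{prop:asymptoticB} supplies, since the error estimates in Lemmas~\ref{lem:bdR}, \ref{lem:bdRprime}, \ref{lem:bdfrakR}, \ref{lem:bdmathfrakRprime}, combined with the lower bound \eqref{eq:lowerbddivisor} for $|\sigma_{\ell-1,\chi}(2^{j})|$, bound each contribution by an expression of the shape $C\bigl(\tfrac{\pi e D\,4^{j}}{K-2}\bigr)^{(K-1)/2}2^{-(\ell-1)j}$, whose base tends to $0$ as $K\to\infty$ for fixed $j$ and $D$ with no further dependence on $\ell$. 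Hence no new estimate is required; one only assembles the ingredients already established.
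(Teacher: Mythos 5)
Your argument is correct and is essentially identical to the paper's proof: both invoke Proposition \ref{prop:asymptoticB} to get $\delta_{K,i,\tau(i)}=o(1)$ for $K\gg_{n,D}1$, then Lemma \ref{lem:inequalityforf} to verify the non-singularity criterion of the preceding proposition. Your added remark on the uniformity of the $o(1)$ bounds in $\tau$ and in the tuple $\ell_1<\dots<\ell_n$ is a fair point that the paper leaves implicit, and your justification of it via the explicit estimates is sound.
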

\begin{proof}
    By Proposition \ref{prop:asymptoticB}, we have $\delta_{K,i,\tau(i)}=o(1)$ for all $1\leq i\leq n$ and $\tau\in S_n$,    where $o(1)\rightarrow0$ when $K$ is sufficiently large relative to $n$ and $D$. Thus, Lemma \ref{lem:inequalityforf} implies that 
    \begin{align*}
          \sup_{\tau\in S_n}|f(\delta_{K,1,\tau(1)},...,\delta_{K,n,\tau(n)})|&\leq(1+o(1))^n-1=o(1),
    \end{align*}
  In particular, $K$ can be chosen such that \eqref{eq:criterion2} is satisfied, showing that $N_{K,\ell_1,...,\ell_n,\chi}$ is non-singular for $K\gg_{n,D}1$.
\end{proof}
%\subsection{Type iii}
Next, we define the matrix that is used to prove Theorem \ref{thm:twistbydifferentD}.
Let $\chi_i$ be primitive Dirichlet characters mod $D$ such that $\chi_i(-1)=(-1)^{\ell}$  and $\chi_i(2)$ are pairwise distinct for all $1\leq i\leq n$. Choose the Fourier coefficients $\mathfrak{a}_{K,\ell,\chi_1}(1), \mathfrak{a}_{K,\ell,\chi_2}(2),...,\mathfrak{a}_{K,\ell,\chi_n}(2^{n-1})$ of each $\mathfrak{F}_{K,\ell,\chi_i}$ ($1\leq i\leq n$) and place them in a $(n,n)$-matrix
\begin{align}
    P_{K,\ell, \chi_1,...,\chi_n}:=\left[\mathfrak{a}_{K,\ell,\chi_i}(2^{j-1})\right]_{1\leq i,j\leq n}=\left[\sigma_{\ell-1,\chi_i}(2^{j-1})(1+\eta_{K,i,j})\right]_{1\leq i,j\leq n}.\label{eq:matrix3}
\end{align}
%To show that $\{\mathfrak{F}_{K,\ell,\chi_i}\}_{i=1}^n$ is linearly independent, it suffices to show that $P_{K,\ell,\chi_1,...,\chi_n}$ is non-singular. 
Define the following matrix
\begin{align}
\mathbf{P}_{K,\ell,\chi_1,...,\chi_n}:&=\begin{bmatrix}
        1 & \sigma_{\ell-1,\chi_1}(2) & \sigma_{\ell-1,\chi_1}(2^2) & \cdots & \sigma_{\ell-1,\chi_1}(2^{n-1})\\
        1 &  \sigma_{\ell-1,\chi_2}(2) &  \sigma_{\ell-1,\chi_2}(2^2) & \cdots &  \sigma_{\ell-1,\chi_2}(2^{n-1})\\
        \vdots & \vdots & \vdots & \ddots & \vdots\\
        1 & \sigma_{\ell-1,\chi_{n-1}}(2) & \sigma_{\ell-1,\chi_{n-1}}(2^2) & \cdots & \sigma_{\ell-1,\chi_{n-1}}(2^{n-1})\\ 
        1 & \sigma_{\ell-1,\chi_n}(2) & \sigma_{\ell-1,\chi_n}(2^2) & \cdots & \sigma_{\ell-1,\chi_n}(2^{n-1})
    \end{bmatrix}.\label{eq:limitmatrixtype3}
\end{align}
\begin{lemma}\label{lem:lowerboundoflimitmatrix3}
   Let $\mathbf{P}_{K,\ell,\chi_1,...,\chi_n}$ be defined as in \eqref{eq:limitmatrixtype3}. Then 
   \begin{align*}
\left|\det \mathbf{P}_{K,\ell,\chi_1,...,\chi_n}\right|\geq\left(\frac{2^{\ell+1}}{D}\right)^{\frac{n(n-1)}{2}}.
   \end{align*}
\end{lemma}
\begin{proof}
        After column reductions, we have 
\begin{align*}
\left|\det \mathbf{P}_{K,\ell,\chi_1,...,\chi_n}\right|&=\left|\det\begin{bmatrix}
        1 & \chi_1(2)2^{\ell-1}  & \cdots & (\chi_1(2)2^{\ell-1})^{n-1}\\
      % 1 & (\overline{\chi}(2)2)^{(\ell_2-1)} &  \cdots & (\overline{\chi}(2)2)^{(\ell_2-1)(n-1)}\\
        \vdots  & \vdots & \ddots & \vdots\\ 
       1 & \chi_n(2)2^{\ell-1} &  \cdots & (\chi_n(2)2^{\ell-1})^{n-1}
    \end{bmatrix}\right|\\&=\prod_{1\leq i<j\leq n}2^{\ell-1}\left|\chi_j(2)-\chi_i(2)\right|
\end{align*}
by the determinant of the Vandermonde matrix. Since $\chi_i$ and $\chi_j$ are primitive Dirichlet characters mod $D$ such that $\chi_i(2)\neq\chi_j(2)$, we may assume that $\chi_j(2)=e^{\frac{2\pi i}{\varphi(D)}r}$ and $\chi_j(2)=e^{\frac{2\pi i}{\varphi(D)}s}$, where $0\leq r<s<\varphi(D)$ and $\varphi$ is the Euler-totient function. Note that 
\begin{align*}
    |\chi_j(2)-\chi_i(2)|&=\left|e^{\frac{2\pi i}{\varphi(D)}r}-e^{\frac{2\pi i}{\varphi(D)}s}\right|\\&\geq\left|e^{\frac{2\pi i}{\varphi(D)}}-1\right|\\&
    \geq2\sin\left(\frac{1}{2}\cdot\frac{2\pi}{\varphi(D)}\right)\\&\geq2\cdot\frac{2}{\pi}\cdot\frac{\pi}{\varphi(D)}\\&\geq\frac{4}{D}.
\end{align*}
It follows that 
\begin{align*}
    \left|\det \mathbf{P}_{K,\ell,\chi_1,...,\chi_n}\right|&\geq\prod_{i=1}^n 2^{\ell-1}4D^{-1}=\left(\frac{2^{\ell+1}}{D}\right)^{\frac{n(n-1)}{2}},
\end{align*}
as desired.
\end{proof}
\begin{proposition}
    Let $f$ be defined as in \eqref{eq:functionf}.  Suppose that 
    \begin{align}
        \sup_{\tau\in S_n}|f(\eta_{K,1,\tau(1)},...,\eta_{K,n,\tau(n)})|<n!^{-1}\cdot2^{1-n}\cdot\left(\frac{4}{D}\right)^{\frac{n(n-1)}{2}}.\label{eq:criterion3}
    \end{align}
    Then $P_{K,\ell,\chi_1,...,\chi_n}$ is nonsingular.
\end{proposition}
\begin{proof}
Note that $\det P_{K,\ell,\chi_1,...,\chi_n}=$
    \begin{align*}
        &\sum_{\tau\in S_n}(-1)^{\sign(\tau)}\prod_{i=1}^n\sigma_{\ell-1,\chi_i}(2^{\tau(i)-1})(1+\eta_{K,i,\tau(i)})\\=&\det \mathbf{P}_{K,\ell,\chi_1,...,\chi_n}+\sum_{\tau\in S_n}(-1)^{\sign(\tau)}f(\eta_{K,1,\tau(1)},...,\eta_{K,n,\tau(n)})\prod_{i=1}^n\sigma_{\ell-1,\chi_i}(2^{\tau(i)-1}).
    \end{align*}
  We have from Lemma \ref{lem:lowerboundoflimitmatrix3} that  
\begin{align*}
\left|\det \mathbf{P}_{K,\ell,\chi_1,...,\chi_n}\right|\geq\left(\frac{2^{\ell+1}}{D}\right)^{\frac{n(n-1)}{2}}.
   \end{align*}
 Note  also  that for any $\tau\in S_n$ we have 
\begin{align*}
    \left|\prod_{i=1}^n\sigma_{\ell-1,\chi}(2^{\tau(i)-1}) \right|&\leq\prod_{i=1}^n\sigma_{\ell-1}(2^{\tau(i)-1})\\&=\prod_{i=1}^n\sigma_{\ell-1}(2^{i-1})\\& < \prod_{i=2}^n2\cdot2^{(i-1)(\ell-1)}\\&=2^{n-1}2^{\frac{(\ell-1)n(n-1)}{2}},
\end{align*}
 which implies that
   \begin{align*}
       &\left|\sum_{\tau\in S_n}(-1)^{\sign(\tau)}f(\eta_{K,1,\tau(1)},...,\eta_{K,n,\tau(n)})\prod_{i=1}^n\sigma_{\ell-1,\overline{\chi}}(2^{\tau(i)-1})\right|\\\leq & n! \sup_{\eta\in S_n}|f(\eta_{K,1,\tau(1)},...,\eta_{K,n,\tau(n)})|2^{n-1}2^{\frac{(\ell-1)n(n-1)}{2}}\\ <&n!\cdot n!^{-1}\cdot2^{1-n}\cdot\left(\frac{4}{D}\right)^{\frac{n(n-1)}{2}}\cdot 2^{n-1}2^{\frac{(\ell-1)n(n-1)}{2}}\\<&\left(\frac{2^{\ell+1}}{D}\right)^{\frac{n(n-1)}{2}}.
   \end{align*}
   Therefore,  $\det P_{K,\ell,\chi_1,...,\chi_n}\neq 0$.
\end{proof}
Theorem \ref{thm:twistbydifferentD}  follows from the next result.
\begin{proposition}\label{prop:nonsingulartypeiii}
    Let $D\geq1$ be an odd square-free integer. For $K\gg_D1$, if $3\leq\ell\leq\frac{K-2}{2}$ is an integer, $\chi_1,...,\chi_n$ are primitive Dirichlet characters mod $D$ such that $\chi_i(-1)=(-1)^{\ell}$ and $\chi_i(2)$ are pairwise distinct for $1\leq i\leq n$, then $P_{K,\ell,\chi_1,...,\chi_n}$ is non-singular.
\end{proposition}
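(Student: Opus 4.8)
The proof will run exactly parallel to the deduction of Propositions \ref{prop:tpyeinonsingular} and \ref{prop:typeiinonsingular} from the asymptotics in Section \ref{sect:Fourier}. The idea is that $\det P_{K,\ell,\chi_1,\dots,\chi_n}$ differs from $\det\mathbf{P}_{K,\ell,\chi_1,\dots,\chi_n}$ by a sum over $S_n$ of terms each controlled by $|f(\eta_{K,1,\tau(1)},\dots,\eta_{K,n,\tau(n)})|$ (with $f$ as in \eqref{eq:functionf}), so once the error terms $\eta_{K,i,j}$ are shown to be uniformly $o(1)$, the Vandermonde-type lower bound $|\det\mathbf{P}_{K,\ell,\chi_1,\dots,\chi_n}|\ge(2^{\ell+1}/D)^{n(n-1)/2}$ from Lemma \ref{lem:lowerboundoflimitmatrix3} dominates the perturbation, and the criterion preceding this proposition applies.

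First I would record the observation that makes the implied constant depend on $D$ alone rather than on $n$ as well: since $\chi_1,\dots,\chi_n$ are pairwise distinct primitive Dirichlet characters mod $D$, we have $n\le\varphi(D)$, so $n$ is bounded in terms of $D$, and any statement of the form ``$K\gg_{n,D}1$'' is automatically of the form ``$K\gg_D1$''. Then I apply Proposition \ref{prop:asymptoticA} to each $\chi_i$ and each exponent $0\le j\le n-1$: it gives $\mathfrak a_{K,\ell,\chi_i}(2^{j})=\sigma_{\ell-1,\chi_i}(2^{j})(1+o(1))$ for $K\gg_{j,D}1$, uniformly in $3\le\ell\le(K-2)/2$ subject to the parity constraint $\chi_i(-1)=(-1)^\ell$ (which is exactly our hypothesis). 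Taking the maximum over the finitely many pairs $(i,j)$ gives $\eta_{K,i,j}=o(1)$ as $K\gg_D1$, and hence by Lemma \ref{lem:inequalityforf},
\[
\sup_{\tau\in S_n}\bigl|f(\eta_{K,1,\tau(1)},\dots,\eta_{K,n,\tau(n)})\bigr|\le(1+o(1))^n-1=o(1),\qquad K\gg_D1 .
\]
In particular this supremum eventually falls below $n!^{-1}\cdot2^{1-n}\cdot(4/D)^{n(n-1)/2}$, so the criterion preceding this proposition forces $\det P_{K,\ell,\chi_1,\dots,\chi_n}\ne0$.

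I do not anticipate a real obstacle: the substantive work has already been carried out, the new elements being only Lemma \ref{lem:lowerboundoflimitmatrix3} (a Vandermonde computation using $|\chi_i(2)-\chi_j(2)|\ge 4/D$, valid because the $\chi_i(2)$ are distinct roots of unity of order dividing $\varphi(D)$) together with the bound $n\le\varphi(D)$. The only point requiring mild care is uniformity: the threshold in the criterion is independent of $\ell$, whereas a priori $\ell$ grows with $K$; this is harmless precisely because Proposition \ref{prop:asymptoticA} is uniform over the admissible range $3\le\ell\le(K-2)/2$, so no additional argument is needed.
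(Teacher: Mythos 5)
Your proposal is correct and follows essentially the same route as the paper's own proof: bound $n$ in terms of $D$ (the paper writes $n=O(D)$, you write $n\le\varphi(D)$), invoke Proposition \ref{prop:asymptoticA} to get $\eta_{K,i,j}=o(1)$ for $K\gg_D 1$, apply Lemma \ref{lem:inequalityforf}, and conclude via the nonsingularity criterion together with Lemma \ref{lem:lowerboundoflimitmatrix3}. Your added remark on uniformity in $\ell$ is accurate and consistent with how the paper implicitly uses Proposition \ref{prop:asymptoticA}.
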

\begin{proof}
    Since $n$ is the number of primitive Dirichlet characters mod $D$ that are chosen, we have $n=O(D)$.   Again, by Proposition \ref{prop:asymptoticA}, we have $\eta_{K,i,\tau(i)}=o(1)$ as for all $1\leq i\leq n$ and $\tau\in S_n$, where $o(1)\rightarrow0$ when $K$ is sufficiently large relative to $D$. Then Lemma \ref{lem:inequalityforf} implies that
    \begin{align*}
          \sup_{\tau\in S_n}|f(\eta_{K,1,\tau(1)},...,\eta_{K,n,\tau(n)})|&\leq(1+o(1))^n-1=o(1).
    \end{align*}
    In particular, $K$ can be chosen such that \eqref{eq:criterion3} is satisfied, which shows that $P_{K,\ell,\chi_1,...,\chi_n}$ is non-singular as $K\gg_{D}1$.
\end{proof}

Finally, we study the matrix that is used to prove Theorem \ref{thm:twistbydifferentD2}.
Let $\chi_1,...,\chi_n$ be primitive Dirichlet characters mod $D$ such that $\chi_i(-1)=(-1)^{\ell}$  and $\chi_i(2)$ are pairwise distinct for all $1\leq i\leq n$. 
Define the coefficient matrix
\begin{align}
    Q_{K,\ell,\chi_1,...,\chi_n}:=\left[\mathfrak{b}_{K,\ell,\chi_i}(2^{j-1})\right]_{1\leq i,j\leq n}=\left[2^{j-1}\sigma_{\ell-1,\chi_i}(2^{j-1})(1+\nu_{K,i,j})\right]_{1\leq i,j\leq n}.\label{eq:matrix4}
\end{align}
and the matrix
\begin{align}
\mathbf{Q}_{K,\ell,\chi_1,...,\chi_n}:&=\begin{bmatrix}
        1 & 2\sigma_{\ell-1,\chi_1}(2) & 2^2\sigma_{\ell-1,\chi_1}(2^2) & \cdots & 2^{n-1}\sigma_{\ell-1,\chi_1}(2^{n-1})\\
        1 &  2\sigma_{\ell-1,\chi_2}(2) &  2^2\sigma_{\ell-1,\chi_2}(2^2) & \cdots &  2^{n-1}\sigma_{\ell-1,\chi_2}(2^{n-1})\\
        \vdots & \vdots & \vdots & \ddots & \vdots\\
        1 & 2\sigma_{\ell-1,\chi_{n-1}}(2) & 2^2\sigma_{\ell-1,\chi_{n-1}}(2^2) & \cdots & 2^{n-1}\sigma_{\ell-1,\chi_{n-1}}(2^{n-1})\\ 
        1 & 2\sigma_{\ell-1,\chi_n}(2) & 2^2\sigma_{\ell-1,\chi_n}(2^2) & \cdots & 2^{n-1}\sigma_{\ell-1,\chi_n}(2^{n-1})
    \end{bmatrix}.\label{eq:limitmatrixtype4}
\end{align}
\begin{lemma}\label{lem:lowerboundoflimitmatrix4}
   Let $\mathbf{Q}_{K,\ell,\chi_1,...,\chi_n}$ be defined as in \eqref{eq:limitmatrixtype4}. Then 
   \begin{align*}
\left|\det \mathbf{Q}_{K,\ell,\chi_1,...,\chi_n}\right|\geq\left(\frac{2^{\ell+2}}{D}\right)^{\frac{n(n-1)}{2}}.
   \end{align*}
\end{lemma}
\begin{proof}
  Since  $\det \mathbf{Q}_{K,\ell,\chi_1,...,\chi_n}=2^{\frac{n(n-1)}{2}}\det\mathbf{P}_{K,\ell,\chi_1,...,\chi_n}$, the result follows from Lemma \ref{lem:lowerboundoflimitmatrix3}.
\end{proof}
\begin{proposition}
    Let $f$ be defined as in \eqref{eq:functionf}.  Suppose that 
    \begin{align*}
        \sup_{\tau\in S_n}|f(\nu_{K,1,\tau(1)},...,\nu_{K,n,\tau(n)})|<n!^{-1}\cdot2^{1-n}\cdot\left(\frac{4}{D}\right)^{\frac{n(n-1)}{2}}.
    \end{align*}
    Then $Q_{K,\ell,\chi_1,...,\chi_n}$ is nonsingular.
\end{proposition}
\begin{proof}
Note that $\det Q_{K,\ell,\chi_1,...,\chi_n}=$
    \begin{align*}
        &\sum_{\tau\in S_n}(-1)^{\sign(\tau)}\prod_{i=1}^n2^{\tau(i)-1}\sigma_{\ell-1,\chi_i}(2^{\tau(i)-1})(1+\nu_{K,i,\tau(i)})\\=&\det \mathbf{Q}_{K,\ell,\chi_1,...,\chi_n}+\sum_{\tau\in S_n}(-1)^{\sign(\tau)}f(\nu_{K,1,\tau(1)},...,\nu_{K,n,\tau(n)})\prod_{i=1}^n2^{\tau(i)-1}\sigma_{\ell-1,\chi_i}(2^{\tau(i)-1}).
    \end{align*}
By  Lemma \ref{lem:lowerboundoflimitmatrix4},  we have  
\begin{align*}
\left|\det \mathbf{Q}_{K,\ell,\chi_1,...,\chi_n}\right|\geq\left(\frac{2^{\ell+2}}{D}\right)^{\frac{n(n-1)}{2}}.
   \end{align*}
For any $\tau\in S_n$ we have 
\begin{align*}   \left|\prod_{i=1}^n2^{\tau(i)-1}\sigma_{\ell-1,\chi}(2^{\tau(i)-1}) \right|&\leq\prod_{i=1}^n2^{\tau(i)-1}\sigma_{\ell-1}(2^{\tau(i)-1})\\&=\prod_{i=1}^n2^{i-1}\sigma_{\ell-1}(2^{i-1})\\& < \prod_{i=2}^n2^{i-1}\cdot 2\cdot2^{(i-1)(\ell-1)}\\&=2^{n-1}2^{\frac{\ell n(n-1)}{2}}.
\end{align*}
Hence
   \begin{align*}
       &\left|\sum_{\tau\in S_n}(-1)^{\sign(\tau)}f(\nu_{K,1,\tau(1)},...,\nu_{K,n,\tau(n)})\prod_{i=1}^n2^{\tau(i)-1}\sigma_{\ell-1,\overline{\chi}}(2^{\tau(i)-1})\right|\\\leq & n! \sup_{\tau\in S_n}|f(\nu_{K,1,\tau(1)},...,\nu_{K,n,\tau(n)})|2^{n-1}2^{\frac{\ell n(n-1)}{2}}\\ <&n!\cdot n!^{-1}\cdot2^{1-n}\cdot\left(\frac{4}{D}\right)^{\frac{n(n-1)}{2}}\cdot 2^{n-1}2^{\frac{\ell n(n-1)}{2}}\\<&\left(\frac{2^{\ell+2}}{D}\right)^{\frac{n(n-1)}{2}}.
   \end{align*}
   Therefore,  $\det Q_{K,\ell,\chi_1,...,\chi_n}\neq 0$.
\end{proof}
The next proposition can be proved the same way as the proof of Proposition \ref{prop:nonsingulartypeiii}. 
\begin{proposition}\label{prop:twisttypeiv}
    Let $D\geq1$ be an odd square-free integer. For $K\gg_D1$, if $3\leq\ell\leq\frac{K-4}{2}$ is an integer, $\chi_1,...,\chi_n$ are primitive Dirichlet characters mod $D$ such that $\chi_i(-1)=(-1)^{\ell}$ and $\chi_i(2)$ are pairwise distinct for $1\leq i\leq n$, then $Q_{K,\ell,\chi_1,...,\chi_n}$ is non-singular.
\end{proposition}
Thus, we have completed the proof of Theorem \ref{thm:twistbydifferentD2}.
\begin{remark}
    In fact, we could also consider the $q^{p^i}$-Fourier coefficients ($p$ is any arbitrary fixed prime) for matrices $P_{K,\ell,\chi_1,...,\chi_n}$ and $Q_{K,\ell,\chi_1,...,\chi_n}$, and the assumption that $\chi_i$'s take distinct values at $2$ in Propositions \ref{prop:nonsingulartypeiii} and \ref{prop:twisttypeiv} can be replaced by $\chi_i$'s taking distinct values at an arbitrary fixed prime $p$, where the results will hold for $K\gg_{p,D}1$.
\end{remark}
\section{Applications}\label{sect:applications}

\subsection{Convolution sums of twisted divisor functions}
%Let $a_{K,\ell,\chi}(n)$ and $b_{K,\ell,\chi}(n)$ denote the $q^n$- Fourier coefficient of $\mathcal{F}_{K,\ell,\chi}(z)$ \eqref{eq:F-def} and $\mathcal{G}_{K,\ell,\chi}(z)$ \eqref{eq:G-def}, respectively. 
Note that $a_{K,\ell,\chi}(n)= b_{K,\ell,\chi}(n)=0$ for all $n\ge1$ when $K<12$ or $K=14$ because $\dim S_K=0$. This fact, together with Propositions \ref{prop:Fourco} and \ref{prop:FourcoRankinCohen}, will give identities that evaluate convolution sums of twisted divisor functions by generalized Bernoulli numbers.  

For a Dirichlet character $\chi$ mod $D$, the generalized Bernoulli numbers $B_{n,\chi}$ are defined by
\begin{align*}
    \sum_{a=1}^D\frac{\chi(a)te^{at}}{e^{Dt}-1}=\sum_{n=0}^{\infty}B_{n,\chi}\frac{t^n}{n!}.
\end{align*}
If $\chi$ is primitive mod $D$ such that $\chi(-1)=(-1)^k$, then the special value of $L(s,\chi)$  is related to $B_{k,\chi}$ by (\cite[Theorems 9.6]{bernoullinumberbook})
\begin{align}
    L(k,\chi)=\frac{(-1)^{k-1}(2\pi i)^k}{2k!D^k}G(\chi)B_{k,\overline{\chi}}.\label{eq:Lseriesandbernoulli}
\end{align}

To obtain clean formulas,  we restrict to the coefficients $a_{K,\ell,\chi}(1)$ and $b_{K,\ell,\chi}(1)$ for $D=p$. Readers may compare the following results with the classical ones, see for example \cite{convolutionsum}.
\begin{theorem}\label{thm:a=0}
    Let $p>2$ be a prime number and  $\chi$ be a primitive Dirichlet character mod $p$. 
    \begin{enumerate} 
    \item If $\chi(-1)=-1$, then we have 
    \begin{align*}
       \sum_{n=1}^{p}\sigma_{2,\mathbbm{1},\overline{\chi}}(n)\sigma_{2,\mathbbm{1},\chi}(p-n)&= -\frac{B_{3,\overline{\chi}}}{6}-\frac{B_{3,\chi}}{6}+\frac{B_{3,\overline{\chi}}B_{3,\chi}}{3B_6},\\\sum_{n=1}^{p}\sigma_{2,\mathbbm{1},\overline{\chi}}(n)\sigma_{4,\mathbbm{1},\chi}(p-n)&=-\frac{B_{5,\overline{\chi}}}{10}-\frac{B_{3,\chi}}{6}+\frac{4B_{5,\overline{\chi}}B_{3,\chi}}{15B_8},\\\sum_{n=1}^{p}\sigma_{2,\mathbbm{1},\overline{\chi}}(n)\sigma_{6,\mathbbm{1},\chi}(p-n)&=-\frac{B_{7,\overline{\chi}}}{14}-\frac{B_{3,\chi}}{6}+\frac{5B_{7,\overline{\chi}}B_{3,\chi}}{21B_{10}},\\\sum_{n=1}^{p}\sigma_{2,\mathbbm{1},\overline{\chi}}(n)\sigma_{10,\mathbbm{1},\chi}(p-n)&=-\frac{B_{11,\overline{\chi}}}{22}-\frac{B_{3,\chi}}{6}+\frac{7B_{11,\overline{\chi}}B_{3,\chi}}{33B_{14}},\\\sum_{n=1}^{p}\sigma_{4,\mathbbm{1},\overline{\chi}}(n)\sigma_{4,\mathbbm{1},\chi}(p-n)&=-\frac{B_{5,\overline{\chi}}}{10}-\frac{B_{5,\chi}}{10}+\frac{B_{5,\overline{\chi}}B_{5,\chi}}{5B_{10}},\\\sum_{n=1}^{p}\sigma_{4,\mathbbm{1},\overline{\chi}}(n)\sigma_{8,\mathbbm{1},\chi}(p-n)&=-\frac{B_{9,\overline{\chi}}}{18}-\frac{B_{5,\chi}}{10}+\frac{7B_{9,\overline{\chi}}B_{5,\chi}}{45B_{14}},\\\sum_{n=1}^{p}\sigma_{6,\mathbbm{1},\overline{\chi}}(n)\sigma_{6,\mathbbm{1},\chi}(p-n)&=-\frac{B_{7,\overline{\chi}}}{14}-\frac{B_{7,\chi}}{14}+\frac{7B_{7,\overline{\chi}}B_{7,\chi}}{7B_{14}}.
    \end{align*}
    \item If $\chi(-1)=1$, then we have
    \begin{align*}
        \sum_{n=1}^{p}\sigma_{3,\mathbbm{1},\overline{\chi}}(n)\sigma_{3,\mathbbm{1},\chi}(p-n)&=\frac{B_{4,\overline{\chi}}}{8}+\frac{B_{4,\chi}}{8}-\frac{B_{4,\overline{\chi}}B_{4,\chi}}{4B_{8}},\\    \sum_{n=1}^{p}\sigma_{3,\mathbbm{1},\overline{\chi}}(n)\sigma_{5,\mathbbm{1},\chi}(p-n)&=\frac{B_{6,\overline{\chi}}}{12}+\frac{B_{4,\chi}}{8}-\frac{5B_{6,\overline{\chi}}B_{4,\chi}}{24B_{10}},\\     \sum_{n=1}^{p}\sigma_{3,\mathbbm{1},\overline{\chi}}(n)\sigma_{9,\mathbbm{1},\chi}(p-n)&=\frac{B_{10,\overline{\chi}}}{20}+\frac{B_{4,\chi}}{8}-\frac{7B_{10,\overline{\chi}}B_{4,\chi}}{40B_{14}}.
    \end{align*}
    \end{enumerate}
\end{theorem}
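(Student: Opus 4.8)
The plan is to derive all of these from one vanishing statement. Since $\dim S_K=0$ for $K\in\{6,8,10,14\}$, Corollary \ref{co:cuspproj} forces the cusp form $\mathcal{F}_{K,\ell,\chi}$ to be identically zero whenever $3\le\ell\le K-3$ and $\chi(-1)=(-1)^{\ell}$; in particular $a_{K,\ell,\chi}(1)=0$. Every identity in the statement corresponds to such a pair: if the two twisted divisor functions carry exponents $\ell-1$ and $k-1$ then I put $K=\ell+k$, and the sign hypothesis on $\chi(-1)$ matches $\ell$ (hence also $k$) being odd in part (1) and even in part (2); since the convolution sum is unchanged under $n\mapsto p-n$ combined with $\ell\leftrightarrow k$ and $\chi\leftrightarrow\overline{\chi}$, the pairs $(6,3),(8,3),(10,3),(14,3),(10,5),(14,5),(14,7)$ and $(8,4),(10,4),(14,4)$ account for all the listed cases.

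First I would write out $a_{K,\ell,\chi}(1)$ explicitly. By \eqref{eq:F-def}, and because the $q^{1}$-coefficient of $G_K$ is $\sigma_{K-1}(1)=1$, we get $a_{K,\ell,\chi}(1)=c_1-\dfrac{2\sigma_{\ell-1,\chi}(0)\sigma_{k-1,\overline{\chi}}(0)}{\zeta(1-K)}$, where $c_1$ is the $q^{1}$-coefficient of $\Tr_1^D(G_{\ell,\chi}G_{K-\ell,\overline{\chi}})$ from Proposition \ref{prop:Fourco}. For $D=p$ prime the sum over $D=D_1D_2$ there has only the two terms $(D_1,D_2)=(p,1)$ and $(1,p)$. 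The first has $\chi_1=\chi$, $\chi_2=\mathbbm{1}$; using that $\sigma_{m-1,\chi,\mathbbm{1}}(0)=\sigma_{m-1,\chi}(0)$ and that the value at $1$ of any such series is $1$, it contributes $\sigma_{\ell-1,\chi}(0)+\sigma_{k-1,\overline{\chi}}(0)$. The second has $\chi_1=\mathbbm{1}$, $\chi_2=\chi$; since $\sigma_{m-1,\mathbbm{1},\chi}(0)=0$, the range $a_1+a_2=p$ collapses to $1\le a_1\le p-1$, so it contributes $\chi(-1)\sum_{n=1}^{p}\sigma_{\ell-1,\mathbbm{1},\overline{\chi}}(n)\sigma_{k-1,\mathbbm{1},\chi}(p-n)$ (here $\overline{\chi}(-1)=\chi(-1)=\pm1$). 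Setting $a_{K,\ell,\chi}(1)=0$ and solving isolates the convolution sum as $\chi(-1)\bigl(-\sigma_{\ell-1,\chi}(0)-\sigma_{k-1,\overline{\chi}}(0)+2\sigma_{\ell-1,\chi}(0)\sigma_{k-1,\overline{\chi}}(0)/\zeta(1-K)\bigr)$.

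Next I would pass to Bernoulli numbers. We have $\zeta(1-K)=-B_K/K$, and combining \eqref{eq:coefEisenstein} with \eqref{eq:Lseriesandbernoulli} applied to $\overline{\chi}$ (legitimate since $\chi$ is primitive mod $p$ with $\chi(-1)=(-1)^{\ell}$, so $\overline{\chi}(-1)=(-1)^{k}$), the Gauss sums and powers of $2\pi i$ cancel and one finds $\sigma_{m-1,\chi}(0)=-B_{m,\chi}/(2m)$, and likewise with $\chi$ replaced by $\overline{\chi}$. Substituting yields the uniform shape $\sum_{n=1}^{p}\sigma_{\ell-1,\mathbbm{1},\overline{\chi}}(n)\sigma_{k-1,\mathbbm{1},\chi}(p-n)=\chi(-1)\bigl(\tfrac{B_{\ell,\chi}}{2\ell}+\tfrac{B_{k,\overline{\chi}}}{2k}-\tfrac{K}{2\ell k}\cdot\tfrac{B_{\ell,\chi}B_{k,\overline{\chi}}}{B_K}\bigr)$. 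Plugging in the ten pairs $(K,\ell)$ above and reducing the rational coefficients reproduces all the stated identities (for instance $(K,\ell)=(6,3)$ gives the first identity of part (1), $(8,4)$ the first of part (2), and so on).

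There is no genuinely hard step; the one point that must be handled carefully is the character bookkeeping in the $D=p$ decomposition — distinguishing $\sigma_{m-1,\chi,\mathbbm{1}}(0)=\sigma_{m-1,\chi}(0)\ne0$ from $\sigma_{m-1,\mathbbm{1},\chi}(0)=0$, and invoking \eqref{eq:Lseriesandbernoulli} with the correct character versus its conjugate — after which everything reduces to elementary arithmetic with $B_{m,\chi}$ and $B_K$. Finally, I would remark that running the same argument with $\mathcal{G}_{K,\ell,\chi}$ and Proposition \ref{prop:FourcoRankinCohen} in place of $\mathcal{F}_{K,\ell,\chi}$ and Proposition \ref{prop:Fourco} yields the weighted analogues of these convolution identities, although they are not needed here since the statement only uses $a_{K,\ell,\chi}(1)=0$.
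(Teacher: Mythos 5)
Your proposal is correct and follows essentially the same route as the paper: exploit $\dim S_K=0$ for $K\in\{6,8,10,14\}$ to force $a_{K,\ell,\chi}(1)=0$, expand that coefficient via Proposition \ref{prop:Fourco} with the $D=p$ decomposition into $(D_1,D_2)=(p,1)$ and $(1,p)$, and convert constant terms to generalized Bernoulli numbers via $\sigma_{m-1,\chi}(0)=-B_{m,\chi}/(2m)$ and $\zeta(1-K)=-B_K/K$, arriving at the same master identity as the paper's \eqref{eq:identityOne}. Your uniform formula even exposes what appears to be a typo in the last displayed identity of part (1), where the coefficient of $B_{7,\overline{\chi}}B_{7,\chi}/B_{14}$ should be $1/7$.
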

\begin{proof}
Note that \eqref{eq:twistsumdivisor} and \eqref{eq:Lseriesandbernoulli} imply that
\begin{align}
    \sigma_{k-1,\chi}(0)=-\frac{B_{k,\chi}}{2k}.\label{eq:twistsigma(0_}
\end{align}
Using Proposition \ref{prop:Fourco} for $D=p$, and \eqref{eq:twistsigma(0_}, we get
    \begin{align*}
      a_{K,\ell,\chi}(1)=&\sigma_{k-1,\overline{\chi}}(0)+\sigma_{\ell-1,\chi}(0)+\overline{\chi}(-1)\sum_{n=1}^{p-1}\sigma_{\ell-1,\mathbbm{1},\overline{\chi}}(n)\sigma_{k-1,\mathbbm{1},\chi}(p-n)-\frac{2\sigma_{\ell-1,\chi}(0)\sigma_{k-1,\overline{\chi}}(0)}{\zeta(1-K)}\\=&-\frac{B_{k,\overline{\chi}}}{2k}-\frac{B_{\ell,\chi}}{2\ell}-\frac{2\left(-\frac{B_{\ell,\chi}}{2\ell}\right)\left(-\frac{B_{k,\overline{\chi}}}{2k}\right)}{-\frac{B_K}{K}}+\overline{\chi}(-1)\sum_{n=1}^{p-1}\sigma_{\ell-1,\mathbbm{1},\overline{\chi}}(n)\sigma_{k-1,\mathbbm{1},\chi}(p-n)\\=&-\frac{B_{k,\overline{\chi}}}{2k}-\frac{B_{\ell,\chi}}{2\ell}+\frac{KB_{k,\overline{\chi}}B_{\ell,\chi}}{2k\ell  B_{K}}+\overline{\chi}(-1)\sum_{n=1}^{p-1}\sigma_{\ell-1,\mathbbm{1},\overline{\chi}}(n)\sigma_{k-1,\mathbbm{1},\chi}(p-n),
    \end{align*}
 where $k=K-\ell$. Note that if 
 $$(\ell,k)\in \mathcal{S}:=\{(3,3),(3,5),(3,7),(3,11);(4,4),(4,6),(4,10);(5,5),(5,9);(7,7) \},$$ then $0\equiv
 \mathcal{F}_{K,\ell,\chi}\in S_{K}$ since for those values of $(\ell,k)$ we have $\dim S_K=0.$
 It follow that for all $(\ell,k)\in\mathcal{S}$ we have
    \begin{align}
        -\frac{B_{k,\overline{\chi}}}{2k}-\frac{B_{\ell,\chi}}{2\ell}+\frac{KB_{k,\overline{\chi}}B_{\ell,\chi}}{2k\ell B_{K}}+\overline{\chi}(-1)\sum_{n=1}^{p-1}\sigma_{\ell-1,\mathbbm{1},\overline{\chi}}(n)\sigma_{k-1,\mathbbm{1},\chi}(p-n)=0.\label{eq:identityOne}
    \end{align}
Writing out \eqref{eq:identityOne} explicitly gives the result.
\end{proof}
\begin{theorem}\label{thm:b=0}
    Let $p>2$ be a prime number and  $\chi$ be a primitive Dirichlet character mod $p$.
    \begin{enumerate} 
    \item 
    If $\chi(-1)=-1$, then
    \begin{align*}
        \sum_{n=1}^{p-1}\sigma_{2,\mathbbm{1},\overline{\chi}}(n)\sigma_{4,\mathbbm{1},\chi}(p-n)(3p-8n)&=\frac{p}{2}(B_{5,\overline{\chi}}-B_{3,\chi}),\\ \sum_{n=1}^{p-1}\sigma_{4,\mathbbm{1},\overline{\chi}}(n)\sigma_{6,\mathbbm{1},\chi}(p-n)(5p-12n)&=\frac{p}{2}(B_{7,\overline{\chi}}-B_{5,\chi}),\\\sum_{n=1}^{p-1}\sigma_{2,\mathbbm{1},\overline{\chi}}(n)\sigma_{8,\mathbbm{1},\chi}(p-n)(p-4n)&=\frac{p}{6}(B_{9,\overline{\chi}}-B_{3,\chi}).
 \end{align*}
 \item  If $\chi(-1)=1$, then
 \begin{align}
 \sum_{n=1}^{p-1}\sigma_{3,\mathbbm{1},\overline{\chi}}(n)\sigma_{7,\mathbbm{1},\chi}(p-n)(p-3n)=-\frac{p}{8}(B_{8,\overline{\chi}}-B_{4,\chi}).  \end{align}
 \end{enumerate}
\end{theorem}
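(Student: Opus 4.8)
The plan is to extract all four identities from a single vanishing statement: $\mathcal{G}_{K,\ell,\chi}=0$ whenever $\dim S_K=0$. I would first check that $\mathcal{G}_{K,\ell,\chi}=\Tr_1^D([G_{\ell,\chi},G_{K-2-\ell,\overline{\chi}}]_1)$ really lies in $S_K$. The Rankin--Cohen bracket of the two holomorphic Eisenstein series $G_{\ell,\chi}\in M_\ell(D,\chi)$ and $G_{K-2-\ell,\overline{\chi}}\in M_{K-2-\ell}(D,\overline{\chi})$ is holomorphic of weight $K$ and nebentypus $\chi\overline{\chi}=\mathbbm{1}$ on $\Gamma_0(D)$, so $\Tr_1^D$ carries it into $M_K$; moreover, since normalized differentiation kills a constant term, the bracket has vanishing constant term at $\infty$, which is visible in Proposition \ref{prop:FourcoRankinCohen} from the fact that its $n=0$ value has its only surviving summand multiplied by $\ell a_2-(K-2-\ell)a_1=0$. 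As a level-one form has a single cusp, $\mathcal{G}_{K,\ell,\chi}\in S_K$, hence $\mathcal{G}_{K,\ell,\chi}=0$ and in particular $b_{K,\ell,\chi}(1)=0$ for every triple $(\ell,k',\chi)$ with $k'=K-2-\ell\ge3$, $\ell\ge3$, $K=\ell+k'+2$ even, $\chi(-1)=(-1)^\ell$, and $\dim S_K=0$.

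Next I would evaluate $b_{K,\ell,\chi}(1)$ directly from Proposition \ref{prop:FourcoRankinCohen} with $D=p$, where the sum over $D=D_1D_2$ has only the two terms $(D_1,D_2)=(p,1)$ and $(1,p)$. For $(p,1)$ the relevant Eisenstein series are $G_{\ell,\chi,\mathbbm{1}}$ and $G_{k',\overline{\chi},\mathbbm{1}}$, and only the endpoint pairs $(a_1,a_2)\in\{(0,1),(1,0)\}$ contribute; using $\sigma_{m,\psi,\mathbbm{1}}(1)=1$ and $\sigma_{m,\psi}(0)=-B_{m+1,\psi}/(2(m+1))$ (which follows from \eqref{eq:twistsumdivisor} and \eqref{eq:Lseriesandbernoulli}, cf.\ the proof of Theorem \ref{thm:a=0}) this term collapses to $\tfrac12(B_{k',\overline{\chi}}-B_{\ell,\chi})$. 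For $(1,p)$ the prefactor is $\overline{\chi}_2(-1)\,p^{-1}=\chi(-1)\,p^{-1}$, the Eisenstein series are $G_{\ell,\mathbbm{1},\overline{\chi}}$ and $G_{k',\mathbbm{1},\chi}$, and since $\chi,\overline{\chi}$ are non-principal mod $p$ the constant terms $\sigma_{\ell-1,\mathbbm{1},\overline{\chi}}(0)$ and $\sigma_{k'-1,\mathbbm{1},\chi}(0)$ both vanish, so the $U_p$-extraction leaves exactly
\[
\frac{\chi(-1)}{p}\sum_{n=1}^{p-1}\sigma_{\ell-1,\mathbbm{1},\overline{\chi}}(n)\,\sigma_{k'-1,\mathbbm{1},\chi}(p-n)\bigl(\ell(p-n)-k'n\bigr).
\]

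Setting $b_{K,\ell,\chi}(1)=0$ and using $\chi(-1)^{-1}=\chi(-1)$ then gives
\[
\sum_{n=1}^{p-1}\sigma_{\ell-1,\mathbbm{1},\overline{\chi}}(n)\,\sigma_{k'-1,\mathbbm{1},\chi}(p-n)\bigl(\ell(p-n)-k'n\bigr)=\frac{\chi(-1)\,p}{2}\bigl(B_{\ell,\chi}-B_{k',\overline{\chi}}\bigr).
\]
The admissible pairs with $\dim S_{\ell+k'+2}=0$ and $3\le\ell\le\frac{K-4}{2}$, so that Proposition \ref{prop:FourcoRankinCohen} applies verbatim, are, for $\chi(-1)=-1$: $(\ell,k')=(3,5)$ at $K=10$ and $(3,9),(5,7)$ at $K=14$; for $\chi(-1)=1$: $(\ell,k')=(4,8)$ at $K=14$. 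Substituting each pair, simplifying the linear form $\ell(p-n)-k'n$ (for instance $3(p-n)-9n=3(p-4n)$ and $4(p-n)-8n=4(p-3n)$) and dividing out its content yields precisely the displayed identities; one may include further pairs such as $(3,3)$ at $K=8$, but they are not listed here.

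I do not expect a genuine obstacle, since every ingredient — Proposition \ref{prop:FourcoRankinCohen}, the evaluation $\sigma_{m,\psi}(0)=-B_{m+1,\psi}/(2(m+1))$, and $S_{10}=S_{14}=0$ — is already in place, and the argument runs parallel to the proof of Theorem \ref{thm:a=0}. The only point demanding care is the bookkeeping in the two-term decomposition over $D=D_1D_2$: correctly placing $\chi$, $\overline{\chi}$, and $\mathbbm{1}$ in the slots of $G_{k',\chi_1,\chi_2}$, tracking the sign $\overline{\chi}_2(-1)$ and the weight $D_2^{-1}=p^{-1}$, and chasing the sign through the relation \eqref{eq:Lseriesandbernoulli} among $L$-values, Gauss sums, and generalized Bernoulli numbers.
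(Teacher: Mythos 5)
Your proposal is correct and follows essentially the same route as the paper: apply Proposition \ref{prop:FourcoRankinCohen} with $D=p$, split the sum over $D=D_1D_2$ into the $(p,1)$ term (which collapses to $\tfrac12(B_{k',\overline{\chi}}-B_{\ell,\chi})$ via $\sigma_{m,\psi}(0)=-B_{m+1,\psi}/(2(m+1))$) and the $(1,p)$ term, then set $b_{K,\ell,\chi}(1)=0$ for the pairs $(\ell,k')\in\{(3,5),(3,9),(5,7),(4,8)\}$ with $\dim S_{10}=\dim S_{14}=0$; your sign bookkeeping and the reduction of the linear forms $\ell(p-n)-k'n$ both check out against the stated identities. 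Your extra justification that the constant term of the first Rankin--Cohen bracket vanishes (so that $\mathcal{G}_{K,\ell,\chi}\in S_K$) is a point the paper leaves implicit, and is welcome.
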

\begin{proof}
    Applying Proposition \ref{prop:FourcoRankinCohen} for $D=p$, we get 
    \begin{align*}
        b_{K,\ell,\chi}(1)&=-k\sigma_{k-1,\overline{\chi}}(0)+\ell \sigma_{\ell-1,\chi}(0)+\overline{\chi}(-1)p^{-1}\sum_{\substack{a_1,a_2\geq0\\a_1+a_2=p}}\sigma_{\ell-1,\mathbbm{1},\overline{\chi}}(a_1)\sigma_{k-1,\mathbbm{1},\chi}(a_2)(\ell a_2-k a_1)\\&=\frac{B_{k,\overline{\chi}}}{2}-\frac{B_{\ell,\chi}}{2}+\overline{\chi}(-1)p^{-1}\sum_{n=1}^{p-1}\sigma_{\ell-1,\mathbbm{1},\overline{\chi}}(n)\sigma_{k-1,\mathbbm{1},\chi}(p-n)(\ell (p-n)-kn),
    \end{align*}
where $k=K-2-\ell$. Note that if $(\ell,k)\in \mathcal{T}:=\{(3,5),(3,9),(4,8),(5,7)\}$ then $0\equiv \mathcal{G}_{K,\ell,\chi}\in S_K$ since for those values of $(\ell,k)$ we have $\dim S_K=0$.
It follows that for all $(\ell,k)\in\mathcal{T}$  we have 
    \begin{align*}
        \frac{B_{k,\overline{\chi}}}{2}-\frac{B_{\ell,\chi}}{2}+\overline{\chi}(-1)p^{-1}\sum_{n=1}^{p-1}\sigma_{\ell-1,\mathbbm{1},\overline{\chi}}(n)\sigma_{k-1,\mathbbm{1},\chi}(p-n)(\ell (p-n)-kn)=0,
    \end{align*}
    which gives the result.
\end{proof}
\begin{remark}
  Some small cases  ($D=1,3,5,7,11,13,15$) of Theorems \ref{thm:a=0} and \ref{thm:b=0}  have been numerically verified by \cite{codefortwisted}, see also Section \ref{sect:conj}.  
\end{remark}

\subsection{Non-vanishing of twisted central \texorpdfstring{$L$-}-values}
We now focus on the case when $\chi$ is real, or equivalently is quadratic. As $G(\chi)^2=\chi(-1) D$ in this case, for a normalized Hecke eigenform $f\in S_{K}$ the functional equation \eqref{eq:functionaleqoftwistedL} becomes
\begin{align*}
    \Lambda(f,\chi,s)=(-1)^{K/2}\chi(-1)\Lambda(f,\chi,K-s).
\end{align*}
Note that if $(-1)^{K/2}\chi(-1)<0$ then $L(f,\chi,K/2)$ is trivially zero. 
%It is speculated that the twisted central $L$-value $L(f,\chi,K/2)$ is non-vanishing for every normalized Hecke eigenform when $(-1)^{K/2}\chi(-1)>0$. 
Some evidence has been provided for the non-vanishing of $L(f,\chi,K/2)$ when $(-1)^{K/2}\chi(-1)>0$.
For example, Lau and Tsang \cite{LautwistedcentralLvalues} showed that for $1\leq D\leq K^{1/6}/(\log K)^5$, we have
\begin{align*}
    \#\{f\in\mathcal{H}_K:L(f,\chi,K/2)\neq0\}\gg|1+\epsilon_{K}(\chi)|^2\frac{D}{\varphi(D)}\frac{K}{(\log K)^2},
\end{align*}
where $\epsilon_K(\chi)=i^KG(\chi)^2/D$, and $\mathcal{H}_K$ denotes the set of normalized Hecke eigenform in $S_{K}$.

As an application of our methods, we show that Maeda's conjecture implies that $L(f,\chi,K/2)$ is non-vanishing as $K\geq10D+2$. Following \cite{Maeda'sconjecture}, we first review  Maeda's conjecture. Let $f(z)=\sum_{n=1}^{\infty}a_f(n)q^n\in S_K$ be a normalized Hecke eigenform. Denote by $K_f$  the field generated over $\mathbb{Q}$ by the coefficients $a_f(n)$ for all $n$. Let $G(f)$ be the Galois group of the Galois closure of $K_f$ over $\mathbb{Q}$. It is well-known that $K_f$ is a number field, and for every $\sigma\in G(f)$, 
\begin{align*}
    f^{\sigma}(z):=\sum_{n=1}^{\infty}a_{f}(n)^{\sigma}q^n
\end{align*}
is also a normalized Hecke eigenform in $S_K$. Maeda's conjecture can be formulated as follows.
\begin{conjecture}\cite[p.~306]{Maeda'sconjecture}
    Let $f\in S_K$ be a normalized Hecke eigenform. Then
    \begin{itemize}
        \item[($H_a$)] $\Span\{f^{\sigma}:\sigma\in G(f)\}=S_K$;
        \item[($H_b$)] $G_f$ is isomorphic to the symmetric group of degree $\dim S_K$.
    \end{itemize}
\end{conjecture}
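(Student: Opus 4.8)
We stress at the outset that Maeda's conjecture is a well-known open problem; we do not prove it, and in what follows we use it only as a hypothesis. We nonetheless indicate the shape a hypothetical proof would take and where the genuine difficulty lies.

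One may first reduce to $(H_b)$. At level one every cusp form is a newform, so $S_K$ admits a basis consisting of its $d:=\dim S_K$ normalized Hecke eigenforms; being pairwise distinct with distinct systems of Hecke eigenvalues, these are linearly independent over $\mathbb{C}$. Hence $(H_a)$ --- that $\{f^{\sigma}:\sigma\in G(f)\}$ spans $S_K$ --- is equivalent to the assertion that this Galois orbit has $d$ elements, i.e.\ that the $d$ eigenforms form a \emph{single} orbit, i.e.\ that $[K_f:\mathbb{Q}]=d$. Since $S_d$ has no proper subgroup of index less than $d$, the hypothesis $(H_b)$ forces $[K_f:\mathbb{Q}]=d$ and therefore implies $(H_a)$. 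Thus the entire content is the determination of the Galois group.

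To attack $(H_b)$ one fixes a prime $p$ (traditionally $p=2$) and considers the characteristic polynomial $P_{K,p}(x)\in\mathbb{Z}[x]$ of the Hecke operator $T_p$ on $S_K$, a monic polynomial of degree $d$; granting the single-orbit statement, its splitting field is the Galois closure of $K_f/\mathbb{Q}$. The classical strategy is to establish in turn that (i) $P_{K,p}$ is irreducible over $\mathbb{Q}$, so $G(f)$ is a transitive subgroup of $S_d$; (ii) $G(f)$ acts \emph{primitively} on the $d$ roots; and (iii) $G(f)$ contains a transposition. By Jordan's theorem a primitive subgroup of $S_d$ containing a transposition is all of $S_d$, which gives $(H_b)$. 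Step (iii) would follow from exhibiting a prime $\ell$ with $v_{\ell}(\operatorname{disc} P_{K,p})=1$: the discriminant condition forces exactly one prime above $\ell$ in $K_f$, tamely ramified with ramification index $2$ (hence $\ell$ odd) and all others unramified of residue degree $1$, so a generator of the corresponding inertia subgroup acts as a transposition. Steps (i) and (ii) would require much finer control --- of the mod-$\ell$ factorization types and Newton polygons of $P_{K,p}$, or of the residual Galois representations attached to the $f^{\sigma}$ together with a ruling-out of proper intermediate fields, for which congruences between eigenforms or the theory of the Eisenstein ideal might conceivably be brought to bear.

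The decisive obstacle is that \emph{none} of steps (i)--(iii) is known in general, nor is any substitute available: at present there is no method to control the arithmetic of the Hecke algebra of $S_K$ --- its discriminant, the ramification of $K_f$, or the factorization of $P_{K,p}$ modulo small primes --- uniformly in the weight $K$. For this reason Maeda's conjecture currently rests on extensive machine verification (it has been confirmed for all weights $K$ up to several thousand, and for various $T_p$) together with partial and conditional results; a genuine proof would require an essentially new input into the $p$-adic and mod-$\ell$ theory of modular forms, which is precisely why we invoke the conjecture here rather than attempt to establish it.
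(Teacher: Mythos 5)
This statement is Maeda's conjecture, which the paper itself states without proof and invokes only as a hypothesis (in the application to non-vanishing of twisted central $L$-values), so your decision to treat it as an open assumption rather than prove it matches the paper exactly. Your surrounding sketch is reasonable background and not needed for the paper's purposes; the only slight inaccuracy is the claim that $S_d$ has no proper subgroup of index less than $d$ (it has $A_d$ of index $2$) — the correct statement is that it has no \emph{core-free} such subgroup, which still yields $(H_b)\Rightarrow[K_f:\mathbb{Q}]=\dim S_K\Rightarrow(H_a)$.
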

\begin{proposition}\label{prop:reductionbyshimuraresult}
  Let $\mathcal{H}_K$ denote the set of normalized Hecke eigenform in $S_K$. Assume $(H_a)$ holds true for $S_K$.  Suppose there exists some $f\in \mathcal{H}_K$ such that $L(f,\chi,K/2)\neq0$. Then $L(g,\chi,K/2)\neq0$ for all $g\in\mathcal{H}_K$. 
\end{proposition}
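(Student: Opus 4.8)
The plan is to use the kernel cusp form $\mathcal{F}_{K,K/2,\chi}$ as a bridge: it turns the non-vanishing of a twisted central $L$-value into the non-vanishing of a coordinate in the Hecke eigenbasis, and that non-vanishing can then be propagated along the Galois orbit of $f$, which by $(H_a)$ is all of $\mathcal{H}_K$. Throughout, recall that $\chi$ is quadratic; we may also assume $\chi(-1)=(-1)^{K/2}$, since otherwise \eqref{eq:functionaleqoftwistedL} forces $L(g,\chi,K/2)=0$ for every $g\in\mathcal{H}_K$ and the statement is vacuous.

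First I would record that everything in Sections \ref{sect:rankin-selberg} and \ref{sect:Fourier} survives at the central index $\ell=K/2$. By Remarks \ref{remark:centralprod} and \ref{remark:traceofprodwhenl=k} the form $\mathcal{F}_{K,K/2,\chi}\in S_K$ of \eqref{eq:F-def} is well-defined, and the identity derived in the proof of Proposition \ref{prop:linearindeperiodtomodularform} remains valid: for every $g\in\mathcal{H}_K$,
\[
  \langle g,\mathcal{F}_{K,K/2,\chi}\rangle = A_{K,K/2,\chi}\,L(g,K-1)\,r_{K/2-1,\chi}(g),
\]
with $A_{K,K/2,\chi}\neq0$, $L(g,K-1)\neq0$, and $r_{K/2-1,\chi}(g)$ a nonzero scalar multiple of $L(g,\chi,K/2)$ by \eqref{eq:periodandLvalue}. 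Hence $\langle g,\mathcal{F}_{K,K/2,\chi}\rangle\neq0$ if and only if $L(g,\chi,K/2)\neq0$.

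Next I would check that $\mathcal{F}_{K,K/2,\chi}$ is defined over $\mathbb{Q}$. By Proposition \ref{prop:Fourco} the Fourier coefficients of $\Tr_1^D(G_{K/2,\chi}G_{K/2,\overline{\chi}})$ are $\mathbb{Z}$-linear combinations of products $\sigma_{\ell-1,\chi_1,\overline{\chi}_2}(a_1)\,\sigma_{k-1,\overline{\chi}_1,\chi_2}(a_2)$; since $\chi$, and hence each primitive component $\chi_i$, takes values in $\{0,\pm1\}$, every factor with $a_i\geq1$ is a rational integer, while the constant-term factors are rational by \eqref{eq:twistsigma(0_}, namely $\sigma_{k-1,\chi}(0)=-B_{k,\chi}/(2k)\in\mathbb{Q}$. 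The correction term in \eqref{eq:F-def} involves only $\zeta(1-K)\in\mathbb{Q}$, such constant terms, and the $\mathbb{Q}$-rational Eisenstein series $G_K$. Thus $\mathcal{F}_{K,K/2,\chi}$ has rational Fourier coefficients. This is exactly where quadraticity of $\chi$ enters: for a general Dirichlet character one would only obtain $\mathcal{F}_{K,K/2,\chi}^{\sigma}=\mathcal{F}_{K,K/2,\chi^{\sigma}}$, so the Galois action in the last step would twist $\chi$ into $\chi^{\sigma}$ and the orbit argument would collapse.

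Finally, expand $\mathcal{F}_{K,K/2,\chi}=\sum_{g\in\mathcal{H}_K}\lambda_g\,g$ in the Hecke eigenbasis. Orthogonality of distinct eigenforms gives $\langle\mathcal{F}_{K,K/2,\chi},g\rangle=\lambda_g\langle g,g\rangle$, so by the first step $\lambda_g\neq0$ if and only if $L(g,\chi,K/2)\neq0$. Given $\sigma\in\Aut(\mathbb{C})$, the $q$-series $g^{\sigma}$ obtained by applying $\sigma$ to the Fourier coefficients of $g$ is again a normalized Hecke eigenform in $S_K$, and $\mathcal{F}_{K,K/2,\chi}^{\sigma}=\mathcal{F}_{K,K/2,\chi}$ by the rationality just established; applying $\sigma$ coefficient-wise to the eigen-expansion and comparing $q$-expansions (the eigenforms being linearly independent) yields $\lambda_{g^{\sigma}}=\sigma(\lambda_g)$, whence $\lambda_g\neq0\iff\lambda_{g^{\sigma}}\neq0$ since $\sigma$ is a field automorphism. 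Now $(H_a)$ asserts that $\{f^{\sigma}:\sigma\in G(f)\}$ spans $S_K$; being a set of linearly independent eigenforms inside the $(\dim S_K)$-element set $\mathcal{H}_K$, it must equal $\mathcal{H}_K$, and each such $\sigma$ extends to $\Aut(\mathbb{C})$. Therefore, from $\lambda_f\neq0$ — which is the hypothesis $L(f,\chi,K/2)\neq0$ — we conclude $\lambda_g\neq0$, i.e. $L(g,\chi,K/2)\neq0$, for every $g\in\mathcal{H}_K$. The one place that genuinely requires care is the pair of points in the third step: the $\mathbb{Q}$-rationality of $\mathcal{F}_{K,K/2,\chi}$ (where quadraticity of $\chi$ is indispensable) and the legitimacy of specializing the Rankin-Selberg identity to $\ell=K/2$; once these are secured, the remainder is routine bookkeeping with the $\Aut(\mathbb{C})$-action on cusp forms.
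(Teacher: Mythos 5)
Your proof is correct, but it takes a genuinely different route from the paper's. The paper's argument is two lines: by Shimura's algebraicity theorem (\cite[Theorem 1]{Shimura1976}), $L(f,\chi,K/2)\neq0$ implies $L(f^{\sigma},\chi^{\sigma},K/2)=L(f^{\sigma},\chi,K/2)\neq0$ for every $\sigma\in G(f)$ (using $\chi^{\sigma}=\chi$ since $\chi$ is quadratic), and then $(H_a)$ forces the orbit $\{f^{\tau}\}$ to exhaust $\mathcal{H}_K$. You instead bypass Shimura entirely and re-derive the needed Galois equivariance internally: you observe that the kernel $\mathcal{F}_{K,K/2,\chi}$ has rational Fourier coefficients when $\chi$ is quadratic, that its eigen-coefficients therefore satisfy $\lambda_{g^{\sigma}}=\sigma(\lambda_g)$ under the $\Aut(\mathbb{C})$-action on $S_K$, and that $\lambda_g\neq0$ detects $L(g,\chi,K/2)\neq0$ via the central-point Rankin--Selberg identity (Remarks \ref{remark:centralprod} and \ref{remark:traceofprodwhenl=k}). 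All the steps check out: the rationality argument via \eqref{eq:twistsigma(0_} is sound, the reduction to the case $\chi(-1)=(-1)^{K/2}$ is handled correctly (otherwise the hypothesis is vacuous), and the identification of the $\dim S_K$-element Galois orbit with $\mathcal{H}_K$ is the same as the paper's. What the paper's route buys is brevity and an external theorem that already packages the period relations; what yours buys is self-containedness within the machinery of Sections \ref{sect:rankin-selberg} and \ref{sect:Fourier} (which the subsequent theorem uses anyway) and a transparent accounting of exactly where quadraticity of $\chi$ is indispensable.
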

\begin{proof}
    Since $L(f,\chi,K/2)\neq0$, we have $L(f^{\sigma},\chi^{\sigma},K/2)=L(f^{\sigma},\chi,K/2)\neq0$ for all $\sigma\in G(f)$ by
     Shimura's result \cite[Theorem 1]{Shimura1976}.  For any $g\in\mathcal{H}_K$, there exists some $\tau\in G(f)$ such that $g=f^{\tau}$ by assumption. Hence $L(g,\chi,K/2)=L(f^{\tau},\chi,K/2)\neq0$.
\end{proof}
The following result can be viewed as a generalization of \cite[Theorem 1]{ConreyFarmernonvanshing}.
\begin{theorem}
    Let $D\geq1$ be an odd square-free integer and $\chi$ be a primitive quadratic Dirichlet character $D$ such that $(-1)^{K/2}\chi(-1)>0$. If $(H_a)$ holds for $S_K$ and $K\geq 10D+2$, then for every Hecke eigenform $f\in S_K$
    \begin{align*}
        L(f,\chi,K/2)\neq 0.
    \end{align*}
\end{theorem}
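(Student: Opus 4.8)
The plan is to reduce the statement, in two stages, to the nonvanishing of a single explicit cusp form, and then to verify that nonvanishing by inspecting its first Fourier coefficient. We may assume $S_K\neq 0$, so that $K$ is even, $K\ge 12$, and $K/2\ge 3$; the hypothesis $(-1)^{K/2}\chi(-1)>0$ says exactly that $\chi(-1)=(-1)^{K/2}$, which is the parity needed for $G_{K/2,\chi}$ to be a genuine modular form and for Proposition~\ref{prop:Rankin-selbergperiod} to apply at $\ell=K/2$ (see Remark~\ref{remark:centralprod}).

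\emph{Stage 1.} By Proposition~\ref{prop:reductionbyshimuraresult} together with $(H_a)$, it suffices to exhibit one normalized Hecke eigenform $f\in\mathcal{H}_K$ with $L(f,\chi,K/2)\neq 0$. Since $\chi$ is quadratic, $\overline\chi=\chi$, so Proposition~\ref{prop:Rankin-selbergperiod}, valid at $\ell=K/2$ by Remark~\ref{remark:centralprod}, gives
\[
\langle f,\mathcal{F}_{K,K/2,\chi}\rangle=c_{K,\chi}\,L(f,K-1)\,L(f,\chi,K/2)\qquad(f\in\mathcal{H}_K),
\]
with $c_{K,\chi}\neq 0$ and $L(f,K-1)\neq 0$ (the point $K-1$ lies in the region of absolute convergence of $L(f,s)$). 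As $\{f:f\in\mathcal{H}_K\}$ spans $S_K$ and $\mathcal{F}_{K,K/2,\chi}\in S_K$ (Remark~\ref{remark:traceofprodwhenl=k}), the existence of such an $f$ is equivalent to $\mathcal{F}_{K,K/2,\chi}\neq 0$. Thus the theorem reduces to proving $\mathcal{F}_{K,K/2,\chi}\neq 0$.

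\emph{Stage 2.} I would establish $\mathcal{F}_{K,K/2,\chi}\neq 0$ by showing its $q^1$-coefficient $a_{K,K/2,\chi}(1)$ is nonzero. Combining Proposition~\ref{prop:Fourco} (valid at $\ell=K/2$ by Remark~\ref{remark:traceofprodwhenl=k}) with \eqref{eq:F-def}, and writing $\sigma:=\sigma_{K/2-1,\chi}(0)=\sigma_{K/2-1,\overline\chi}(0)$ (equal since $\overline\chi=\chi$), one extracts
\[
a_{K,K/2,\chi}(1)=2\sigma-\frac{2\sigma^{2}}{\zeta(1-K)}+S,
\]
where $S=\sigma\cdot\mathscr{E}_{K,K/2,1,\chi}$ in the notation of \eqref{eq:expressionoftheFoco} collects the contributions of the nontrivial factorizations $D=D_1D_2$ with $D_2>1$; in particular $S=0$ when $D=1$.

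\emph{Stage 3.} Now I would estimate the pieces. By Lemma~\ref{lem:boundforL}, $\sigma\neq 0$. The computation behind Lemma~\ref{lem:bdquotientLandzeta} applies verbatim at $\ell=K/2$ and bounds $|\sigma/\zeta(1-K)|$, while Lemma~\ref{lem:bdDtermprod} at $n=1$, $\ell=k=K/2$ gives $|S|\le 16\big(\pi eD/(K-2)\big)^{(K-1)/2}|\sigma|$. When $K\ge 10D+2$ one has $\pi eD/(K-2)\le \pi e/10<1$ and $\pi eD/(K-1)<1$, so both $|\sigma/\zeta(1-K)|$ and $|S|/|\sigma|$ are small; a direct computation — tightest at $D=1$, $K=12$, where $S=0$ and one only needs $\sigma\neq\zeta(1-K)$, and at $D=3$, $K=32$ — then yields $|2\sigma-2\sigma^{2}/\zeta(1-K)|>|S|$, whence $a_{K,K/2,\chi}(1)\neq 0$ and the theorem follows. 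The main obstacle is precisely this last bookkeeping: arranging the explicit constants in the error bounds so that the threshold is exactly $10D+2$ (a slightly lossier estimate would only give $K\gg_D 1$), and separately treating the few boundary weights at which the geometric decay of the error terms is least favorable.
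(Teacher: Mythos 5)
Your proposal follows essentially the same route as the paper: reduce via Proposition \ref{prop:reductionbyshimuraresult} and the central-point case of Proposition \ref{prop:Rankin-selbergperiod} (Remark \ref{remark:centralprod}) to the nonvanishing of $\mathcal{F}_{K,K/2,\chi}$, and then show $a_{K,K/2,\chi}(1)=2\sigma_{K/2-1,\chi}(0)(1+\varepsilon_{K,D})\neq 0$ by bounding the error term, using $\pi eD/(K-2)\le \pi e/10<1$ under the hypothesis $K\ge 10D+2$. The only part you defer — the explicit constants — is exactly what the paper's Proposition \ref{prop:nonvanishingofF} carries out, obtaining the uniform bound $|\varepsilon_{K,D}|<0.65$ from the sharper intermediate estimates \eqref{eq:accuratebdquotientzetaandL}, \eqref{eq:accuratebdforcentralL} and \eqref{eq:sharperbd2/L} rather than from the final lemma statements.
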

\begin{proof}
    As mentioned in Remark \ref{remark:centralprod}, \eqref{eq:convolution} extends to $k=\ell=\frac{K}{2}$:
    \begin{align*}
        \langle f,\mathcal{F}_{K,k,\chi}\rangle=\frac{\Gamma(K-1)(k-1)!D^{k}}{(4\pi)^{K-1}(2\pi i)^{k}\overline{G(\chi)}}L(f,K-1)L(f,\chi,K/2).
    \end{align*}
   Since $L(f,K-1)\neq0$, we have
    \begin{align*}
        L(f,\chi,K/2)=0\quad{\rm if~and~only~if}\quad\langle f,\mathcal{F}_{K,k,\chi}\rangle=0.
    \end{align*}
    By Proposition \ref{prop:reductionbyshimuraresult}, it suffices to show that $\mathcal{F}_{K,k,\chi}\not\equiv0$ for $K\geq10D+2$, which is proved in Proposition \ref{prop:nonvanishingofF}.
\end{proof}
 To show that $\mathcal{F}_{K,k,\chi}\not\equiv0$, it suffices to prove that at least one Fourier coefficient of $\mathcal{F}_{K,k,\chi}$  is non-vanishing. It is convenient to consider the $q$-Fourier coefficient $a_{K,k,\chi}(1)$ of $\mathcal{F}_{K,k,\chi}$. 

\begin{proposition}\label{prop:nonvanishingofF}  Let $D\geq1$ be an odd square-free integer and $\chi$ be a primitive quadratic Dirichlet character $D$ such that $(-1)^{K/2}\chi(-1)>0$.
 If $K\geq10D+2$, then $a_{K,k,\chi}(1)\neq0$. 
\end{proposition}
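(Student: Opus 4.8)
The plan is to compute the $q$-Fourier coefficient $a_{K,k,\chi}(1)$ of $\mathcal F_{K,k,\chi}$ exactly, isolate its leading term, and show the remaining contributions are too small to cancel it once $K\ge 10D+2$. First I would extract $a_{K,k,\chi}(1)$ from Proposition \ref{prop:Fourco}, which is valid for $\ell=k=K/2$ by Remark \ref{remark:traceofprodwhenl=k}: exactly as in the derivation of \eqref{eq:expressionoftheFoco} (compare the explicit computation in the proof of Theorem \ref{thm:a=0}), the $q$-coefficient of $\mathcal F_{K,\ell,\chi}$ equals $\sigma_{k-1,\overline\chi}(0)+\sigma_{\ell-1,\chi}(0)-\frac{2\sigma_{\ell-1,\chi}(0)\sigma_{k-1,\overline\chi}(0)}{\zeta(1-K)}+\sigma_{k-1,\overline\chi}(0)\,\mathscr E_{K,\ell,1,\chi}$ (here $\mathcal E_{K,\ell,1,\chi}=0$). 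Setting $\ell=k=K/2$ and using that $\chi$ quadratic forces $\sigma_{k-1,\overline\chi}(0)=\sigma_{k-1,\chi}(0)=:\sigma_0$ gives $a_{K,k,\chi}(1)=2\sigma_0-\frac{2\sigma_0^2}{\zeta(1-K)}+\sigma_0\,\mathscr E_{K,k,1,\chi}$. A short computation from \eqref{eq:twistsumdivisor} and \eqref{eq:Lseriesandbernoulli} (cf.\ the proof of Theorem \ref{thm:a=0}) gives $\sigma_0=-\frac{B_{k,\chi}}{2k}$, which is nonzero because $|L(k,\chi)|\ge 2-\zeta(2)>0$ for $k\ge 2$ by Lemma \ref{lem:boundforL}. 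Dividing by $\sigma_0$ and using $\zeta(1-K)=-B_K/K$ and $K=2k$ reduces the statement to the inequality $|2B_{k,\chi}/B_K|+|\mathscr E_{K,k,1,\chi}|<2$.

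To bound the first term I would use $|B_K|=\frac{2(2k)!\,\zeta(2k)}{(2\pi)^{2k}}$ together with $|B_{k,\chi}|=\frac{2\,k!\,D^{k-1/2}|L(k,\chi)|}{(2\pi)^k}$, the latter from \eqref{eq:Lseriesandbernoulli} and $|G(\chi)|=\sqrt D$, so that $|B_{k,\chi}/B_K|=\frac{k!\,D^{k-1/2}|L(k,\chi)|(2\pi)^k}{(2k)!\,\zeta(2k)}$; estimating $\frac{k!}{(2k)!}$ via Lemma \ref{lem:stirling}, $|L(k,\chi)|\le\zeta(2)$ via Lemma \ref{lem:boundforL}, and $\zeta(2k)>1$ turns this into $\frac{C_1}{\sqrt D}\bigl(\frac{\pi e D}{2k}\bigr)^k$ for an absolute constant $C_1$. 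For the second term, when $D=1$ the defining sum is empty, so $\mathscr E_{K,k,1,\chi}=0$; for $D\ge 3$ I would argue as in the proof of Lemma \ref{lem:bdDtermprod}: in each inner sum $\sum_{a_1+a_2=D_2}\sigma_{k-1,\chi_1,\overline\chi_2}(a_1)\sigma_{k-1,\overline\chi_1,\chi_2}(a_2)$ with $D_2\mid D$, $D_2>1$, the terms at $a_1=0$ and $a_1=D_2$ vanish, and by \eqref{eq:anotherboundforsigma} that sum is $\le\zeta(2)^2 D_2^{2k-1}4^{1-k}$; summing over such $D_2$ (using \eqref{eq:anotherboundforsigma} once more) and multiplying by the bound \eqref{eq:bound2/L} for $|\sigma_{k-1,\overline\chi}(0)^{-1}|$ gives $|\mathscr E_{K,k,1,\chi}|\le C_2\bigl(\frac{\pi e D}{2(k-1)}\bigr)^{k-1/2}$ for an absolute constant $C_2$ (numerically one may take $C_1\approx 1.3$ and $C_2\approx 12$).

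To conclude I would invoke $K\ge 10D+2$, that is, $k\ge 5D+1$ and $k-1\ge 5D$. The decisive elementary fact is $\pi e<10$, so both $\frac{\pi e D}{2k}$ and $\frac{\pi e D}{2(k-1)}$ are strictly less than $\frac{\pi e}{10}<1$; hence the two bounds of the previous paragraph become $\frac{2C_1}{\sqrt D}\bigl(\frac{\pi e}{10}\bigr)^{5D+1}$ and $C_2\bigl(\frac{\pi e}{10}\bigr)^{5D+1/2}$, whose sum is decreasing in $D$ and therefore maximal at $D=3$. A direct numerical estimate there (with $D=1$ already disposed of, no $\mathscr E$-term being present) shows the sum is below $2$ — in fact close to $1$ — so $a_{K,k,\chi}(1)/\sigma_0\ne 0$, and therefore $a_{K,k,\chi}(1)\ne 0$.

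The main obstacle is quantitative rather than conceptual: the estimate is genuinely tight at the smallest admissible modulus $D=3$ (which forces $K$ to be roughly $32$), so the constants produced by Stirling's formula and by the trivial bounds on $|L(k,\chi)|$ must be tracked carefully enough to certify that the total correction never actually reaches $2$; once the inequality $\pi e<10$ has done its work, the rest is routine bookkeeping.
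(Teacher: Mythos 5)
Your proposal is correct and follows essentially the same route as the paper: extract $a_{K,k,\chi}(1)$ from Proposition \ref{prop:Fourco} (valid at $\ell=k=K/2$ by Remark \ref{remark:traceofprodwhenl=k}), factor out the nonzero main term $2\sigma_{k-1,\chi}(0)$, bound the relative error term $|B_{k,\chi}/B_K|$ and the $D_2\neq1$ contribution via Stirling, the trivial $L$-bounds, and the estimates of Lemmas \ref{lem:bdquotientLandzeta}--\ref{lem:bdDtermprod}, and close with $\pi e<10$ under $K\geq 10D+2$. The only cosmetic difference is that the paper plugs $K-2\geq 10D$ into uniform bounds and evaluates a single numerical worst case ($\approx 0.65$), while you handle $D=1$ separately and take the maximum at $D=3$; both are sound.
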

\begin{proof}
By Proposition \ref{prop:Fourco}, Corollary \ref{co:cuspproj} (and Remark \ref{remark:traceofprodwhenl=k}), and the fact that $\chi=\overline{\chi}$, we have
\begin{align*}
a_{K,k,\chi}(1)
=2\sigma_{k-1,\chi}(0)+\sum_{\substack{D=D_1D_2\\D_2\neq 1 }}\chi_2(-1)^{-1}\sum_{\substack{a_1,a_2\geq0\\a_1+a_2=D_2}}\sigma_{k-1,\chi_1,\chi_2}(a_1)\sigma_{k-1,\chi_1,\chi_2}(a_2)-\frac{2\sigma_{k-1,\chi}(0)^2}{\zeta(1-K)}.
\end{align*}
Thus, we can write $a_{K,k,\chi}(1)=2\sigma_{k-1,\chi}(0)(1+\varepsilon_{K,D})$, where
\begin{align*}
    \varepsilon_{K,D}=\frac{\sigma_{k-1,\chi}(0)^{-1}}{2}\sum_{\substack{D=D_1D_2\\D_2\neq 1 }}\chi_2(-1)^{-1}\sum_{\substack{a_1,a_2\geq0\\a_1+a_2=D_2}}\sigma_{k-1,\chi_1,\chi_2}(a_1)\sigma_{\ell-1,\chi_1,\chi_2}(a_2)-\frac{\sigma_{k-1,\chi}(0)}{\zeta(1-K)}.
\end{align*}
To show $a_{K,k,\chi}(1)\neq0$, it suffices to show that $|\varepsilon_{K,D}|<1$. 
Note that \eqref{eq:accuratebdquotientzetaandL} implies that
\begin{align}
    \left|\frac{\sigma_{k-1,\chi}(0)}{\zeta(1-K)}\right|&\leq   \frac{e\zeta(k)}{2\sqrt{2\pi}}\left(\frac{(k-1)D}{K-1}\right)^{k-1/2}\left(\frac{2\pi e}{K-1}\right)^{k}  \nonumber\\&\leq\frac{e\zeta(k)}{2\sqrt{2\pi}}\sqrt{\frac{K-1}{(k-1)D}}\left(\frac{(K-2)\pi eD}{(K-1)^2}\right)^k  \nonumber\\&\leq\frac{e\zeta(k)}{2\sqrt{2\pi}}\sqrt{\frac{3}{D}}\left(\frac{\pi e D}{K-1}\right)^k.  \label{eq:bdfor2ndterm}
\end{align}
Using \eqref{eq:accuratebdforcentralL} for $\ell=k$ and $n=1$, we get
 \begin{align}
\left|\sum_{\substack{D=D_1D_2\\D_2\neq 1 }}\overline{\chi}_2(-1)\sum_{\substack{a_1,a_2\geq0\\a_1+a_2=D_2}}\sigma_{k-1,\chi_1,\chi_2}(a_1)\sigma_{k-1,\chi_1,\chi_2}(a_2)\right|\leq2\zeta(k-1)^2\zeta(K-1)\left(\frac{D}{2}\right)^{K-1}. \label{eq:sharperbdmathfrakE}
 \end{align}
Note also that \eqref{eq:sharperbd2/L} implies that 
    \begin{align}
        \left|\frac{\sigma_{k-1,\overline{\chi}}(0)^{-1}}{2}\right|\leq\frac{1}{2\sqrt{e}(2-\zeta(k))}\left(\frac{2\pi e}{(k-1)D}\right)^{k-1/2}.\label{eq:bd1/Lsharper}
    \end{align}
Combing \eqref{eq:bdfor2ndterm}, \eqref{eq:sharperbdmathfrakE} and \eqref{eq:bd1/Lsharper}, we get 
\begin{align*}
   |\varepsilon_{K,D}|\leq  &\frac{\zeta(k-1)^2\zeta(K-1)}{\sqrt{e}(2-\zeta(k))}\left(\frac{\pi eD}{K-2}\right)^{\frac{K-1}{2}}+   \frac{e\zeta(k)}{2\sqrt{2\pi}}\sqrt{\frac{3}{D}}\left(\frac{\pi e D}{K-1}\right)^k.
\end{align*}
If $K-2\geq 10D$, then  
\begin{align*}
    |\varepsilon_{K,D}|<\frac{\zeta(5)^2\zeta(11)}{\sqrt{e}(2-\zeta(6))}\left(\frac{\pi e}{10}\right)^{\frac{11}{2}}+   \frac{e\zeta(6)}{2\sqrt{2\pi}}\sqrt{3}\left(\frac{\pi e }{10}\right)^6\approx0.65<1,
\end{align*}
which completes the proof.
\end{proof}

\section{Conjectures}\label{sect:conj}
We believe that twisted periods with fixed character but different indices span the whole vector space $S_K^{\ast}$, which follows from the conjectures below. 
\begin{conjecture}\label{conj:differentindice}
 Let $3\leq\ell_1<\ell_2<...<\ell_n\leq \frac{K-2}{2}$ be integers,
 $D\geq1$ an integer, and $\chi$ be a primitive Dirichlet character such that $\chi(-1)=(-1)^{\ell_i}$ for all $1\leq i\leq n$.  If $n\leq\dim S_K$, then $\{r_{\ell_i-1,\chi}\}_{i=1}^{n}$  is linearly independent.
\end{conjecture}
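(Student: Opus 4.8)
The plan is to push the coefficient-matrix method of Section~\ref{sect:coefmatrix} to its logical conclusion, replacing the asymptotic input (which is precisely what forces $K\gg_{n,D}1$ in Theorem~\ref{thm:mainthmfixDdifferentell}) by exact arithmetic information valid for every admissible $K$. By Proposition~\ref{prop:linearindeperiodtomodularform}, the conjecture — which I would first attack in the case $D$ odd square-free — is equivalent to the assertion that the cusp forms $\mathcal{F}_{K,\ell_1,\chi},\dots,\mathcal{F}_{K,\ell_n,\chi}\in S_K$ are linearly independent whenever $n\le\dim S_K$. So the task reduces to producing, for each such configuration, a nonvanishing $n\times n$ minor of the coefficient matrix $\big[a_{K,\ell_i,\chi}(m)\big]_{1\le i\le n,\ m\ge1}$.

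The first thing I would do is enlarge the pool of columns. Rather than use only $m=1,2,2^2,\dots,2^{n-1}$ as in \eqref{eq:matrix1}, I would allow $m$ to range over all $1\le m\le\dim S_K$, and in particular over primes $p\nmid D$, for which the Eisenstein pieces simplify (e.g.\ $\sigma_{\ell-1,\chi}(p)=1+\chi(p)p^{\ell-1}$). Using Proposition~\ref{prop:Fourco} and Corollary~\ref{co:cuspproj}, each $a_{K,\ell,\chi}(m)$ is an explicit finite sum in divisor-type functions whose dominant part, after suitable column operations, assembles into a generalized Vandermonde in the nodes $\chi(2)2^{\ell_i-1}$ (or $1+\chi(p)p^{\ell_i-1}$) that is nonsingular precisely because the $\ell_i$ are distinct; this is the mechanism already exploited in Lemmas~\ref{lem:lowerboundoflimitmatrix} and \ref{lem:lowerboundoflimitmatrix3}. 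The content of the conjecture is that the \emph{full} matrix, not just its leading part, keeps rank $n$ as soon as $n\le\dim S_K$.

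The crucial step is then to control the correction terms with no largeness hypothesis. The decomposition \eqref{eq:expressionoftheFoco} splits $a_{K,\ell,\chi}(m)$ into the dominant contribution $\sigma_{\ell-1,\chi}(m)$ plus pieces built from $\sigma_{k-1,\overline{\chi}}$ with $k=K-\ell$, from $\sigma_{K-1}$, and from the convolution sums $\mathcal{E}_{K,\ell,m,\chi}$ and $\mathscr{E}_{K,\ell,m,\chi}$. When $K$ is only $\gtrsim 2n$ these need not be small, but one can still hope to exclude relations by a finer structural argument: the $\ell$-dependence of the dominant term is ``arithmetically transverse'' to that of every correction term, since the latter involve $\sigma_{k-1,\overline{\chi}}$ with $k=K-\ell$ (growing in $K-\ell$ rather than in $\ell$), a fixed $\sigma_{K-1}$, and convolution sums of total weight $K-2$. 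One concrete route is to analyze, row by row, the Newton polygon in the variable $2^{\ell}$ (resp.\ $p^{\ell}$) of the entries and show that the vertices contributed by the Vandermonde term cannot be cancelled; an alternative is to feed back the Rankin--Selberg identity of Proposition~\ref{prop:Rankin-selbergperiod} together with the Eichler--Shimura description of $S_K^{\ast}$ to rule out any accidental dependence among the functionals $r_{\ell_i-1,\chi}$ directly.

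I expect this last step to be the real obstacle, and I suspect an unconditional proof is genuinely hard. For $K$ comparable to $2n$ the space $S_K$ is small, yet the corrections $\mathcal{E},\mathscr{E}$ and the term $\sigma_{\ell-1,\chi}(0)/\zeta(1-K)$ — which package central-type special values of Dirichlet $L$-functions — are of the same order as the main term, so the clean Vandermonde estimate collapses and one needs either a nonvanishing input for those special values, or a structural statement that $\ell\mapsto\mathcal{F}_{K,\ell,\chi}$ places the images in general position in $S_K$, which is essentially a reformulation of the conjecture itself. A reasonable intermediate target is the borderline case $n=\dim S_K$ with $\chi$ quadratic: there the product-of-Eisenstein-series kernel is already shown to be nonzero in Proposition~\ref{prop:nonvanishingofF}, and Maeda-type genericity of the Hecke eigenbasis (as in Section~\ref{sect:applications}) could in principle be leveraged to upgrade ``some $n\times n$ minor is nonzero'' to ``the minor we need is nonzero''.
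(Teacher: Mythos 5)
You are addressing Conjecture \ref{conj:differentindice}, which the paper does not prove: it is stated as an open problem in Section \ref{sect:conj} and only verified numerically for $K,D\le 40$. The unconditional results of the paper (Theorems \ref{thm:mainthmfixDdifferentell} and \ref{thm:mainthmfixDdifferentell2}) require $K\gg_{n,D}1$ precisely because they rest on the asymptotics of Proposition \ref{prop:asymptoticA}. Your proposal correctly reproduces the paper's reduction --- via Proposition \ref{prop:linearindeperiodtomodularform}, linear independence of the periods is equivalent to linear independence of the kernels $\mathcal{F}_{K,\ell_i,\chi}$, which one attacks through an $n\times n$ coefficient matrix whose leading part is a Vandermonde in the nodes $\chi(2)2^{\ell_i-1}$ --- but it does not go beyond that reduction, so it is not a proof.

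The genuine gap is the one you yourself flag: when $K$ is of the same order as $2\ell_n$, the correction terms $\mathcal{E}_{K,\ell,m,\chi}$, $\mathscr{E}_{K,\ell,m,\chi}$ and $2\sigma_{\ell-1,\chi}(0)/\zeta(1-K)$ are comparable to the main term $\sigma_{\ell-1,\chi}(m)$, and nothing in your proposal actually controls them. The ``Newton polygon in $2^{\ell}$'' idea does not obviously work: the corrections involve $\sigma_{K-\ell-1,\overline{\chi}}$, whose $\ell$-dependence scales like $2^{(K-\ell)j}$, so the two ``transverse'' scales collide exactly when $\ell$ is near $K/2$, which is a regime the conjecture must cover. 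Invoking Eichler--Shimura or Maeda-type genericity to place the $\mathcal{F}_{K,\ell_i,\chi}$ in general position is a restatement of the desired conclusion rather than an argument for it. A further issue is that the conjecture allows arbitrary $D\ge1$, whereas Proposition \ref{prop:linearindeperiodtomodularform} and the entire trace computation of Section \ref{sect:Fourier} are established only for $D$ odd and square-free, so even the first reduction step would have to be re-proved in the generality claimed. In short, your plan coincides with the paper's method where that method works, and stops where the paper's method stops; the statement remains a conjecture.
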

\begin{conjecture}\label{conj:differentindicebracket}
  Let $3\leq\ell_1<\ell_2<...<\ell_n\leq\frac{K-4}{2}$ be integers, $D\geq1$ an integer, and $\chi$ be a primitive Dirichlet character such that $\chi(-1)=(-1)^{\ell_i}$ for all $1\leq i\leq n$.  If $n\leq\dim S_K$, then $\{r_{\ell_i,\chi}\}_{i=1}^{n}$  is linearly independent.  
\end{conjecture}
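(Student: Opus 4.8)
The plan is to push the argument behind Theorems~\ref{thm:mainthmfixDdifferentell2} and~\ref{thm:twistbydifferentD2} from the range ``$K\gg_{n,D}1$'' down to ``$n\le\dim S_K$''. By Proposition~\ref{prop:linearindeperiodtomodularform2}, the linear independence of $\{r_{\ell_i,\chi}\}_{i=1}^n$ is equivalent to that of the cusp forms $\{\mathcal{G}_{K,\ell_i,\chi}\}_{i=1}^n\subset S_K$, so the task reduces to producing indices $1\le m_1<\dots<m_n$ for which the $n\times n$ matrix $\big[\mathfrak{b}_{K,\ell_i,\chi}(m_j)\big]$ of normalized Fourier coefficients (Proposition~\ref{prop:FourcoRankinCohen}, formula~\eqref{eq:fouriercoefficientsb}) is non-singular. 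The natural first attempt is $m_j=2^{j-1}$, as in the theorems: the ``main term'' matrix $\big[2^{j-1}\sigma_{\ell_i-1,\chi}(2^{j-1})\big]$ reduces after column operations to a Vandermonde matrix in the nodes $\chi(2)2^{\ell_i-1}$, hence is invertible because the $\ell_i$ are distinct (Lemma~\ref{lem:lowerboundoflimitmatrix2}).

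The substance lies in bounding the perturbation coming from the error terms $\delta_{K,i,j}$ in~\eqref{eq:matrix2}. Running the cofactor-expansion estimate exactly as in the proof of Proposition~\ref{prop:typeiinonsingular}, one needs $\sup_{\tau\in S_n}|f(\delta_{K,1,\tau(1)},\dots,\delta_{K,n,\tau(n)})|<n!^{-1}(3/4)^{n(n-1)/2}2^{1-n}$, i.e.\ essentially $\max_{i,j}|\delta_{K,i,j}|$ must beat a quantity that is super-exponentially small in $n$. The estimates underlying Proposition~\ref{prop:asymptoticB} (Lemmas~\ref{lem:bdR}--\ref{lem:bdmathfrakRprime}) only control $\delta_{K,i,j}$ by expressions of the shape $\big(\pi e D\,4^{j}/(K-2)\big)^{(K-1)/2}$ and $\big(\pi e\,2^{j+1}/((k-1)D)\big)^{k-\ell_i}$, and these blow up as soon as $j$ is allowed to be as large as $n-1\asymp K/12$. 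Hence the current toolkit says nothing in this regime, and this is the heart of the problem: one must find probe indices and sharper arithmetic estimates for the non-leading contributions to the Fourier coefficients of $\Tr_1^D([G_{\ell,\chi},G_{K-2-\ell,\overline{\chi}}]_1)$ that genuinely dominate a generalized-Vandermonde determinant even when $n$ is of order $\dim S_K$.

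Two avenues seem worth trying. First, replace $2^{j-1}$ by a sparse set of primes $p_1<\dots<p_n$, so that $\sigma_{\ell_i-1,\chi}(p_j)=1+\chi(p_j)p_j^{\ell_i-1}$ produces a cleaner generalized-Vandermonde main term while the convolution error sums (which now only range up to $p_j$) are comparatively smaller; one then re-runs all of the Stirling-type estimates of Section~\ref{sect:Fourier} with these indices. Second, bypass Fourier coefficients and exploit the inner-product identity $\langle f,\mathcal{G}_{K,\ell,\chi}\rangle=B_{K,\ell,\chi}L(f,K-2)\,r_{\ell,\chi}(f)$ to reduce the conjecture to the non-degeneracy of the $n\times\dim S_K$ matrix $\big[L(f_j,\chi,\ell_i+1)\big]$ of twisted $L$-values over the Hecke basis, which one might attack with a Maeda-type input in the spirit of Section~\ref{sect:applications}. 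In either case the final step is routine bookkeeping: identify the non-vanishing main-term minor, bound the perturbation strictly below it via Lemma~\ref{lem:inequalityforf} and the refined error bounds, and conclude exactly as in Propositions~\ref{prop:typeiinonsingular} and~\ref{prop:nonsingulartypeiii}.

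I expect the error control to be the decisive obstacle. The elementary ``main term versus error'' dichotomy that drives the present paper is too lossy to survive $n\asymp\dim S_K$, and it is entirely possible that the full conjecture requires a new idea — a structural description of the image of $\ell\mapsto\mathcal{G}_{K,\ell,\chi}$ inside $S_K$, or a genuine linear-independence theorem for twisted $L$-values. A realistic intermediate goal, already new, would be to enlarge the admissible range from $K\gg_{n,D}1$ to something like $K\gg_D n^{2+\varepsilon}$ (equivalently $n\le K^{1/2-\varepsilon}$) by optimizing the probe indices together with the Stirling estimates.
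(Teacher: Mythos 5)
The statement you set out to prove is Conjecture \ref{conj:differentindicebracket}, which the paper itself leaves open: the authors offer no proof, only numerical verification for $K,D\le 40$ via the non-singularity of the matrix $\mathcal{C}_2$, and their Theorem \ref{thm:mainthmfixDdifferentell2} covers only the far weaker range $K\gg_{n,D}1$ (with no explicit dependence of the threshold on $n$). Your proposal does not close this gap either---and you say so yourself---so what you have written is a research plan rather than a proof, and it should be assessed as such: the conjecture remains unproved.

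Within that caveat, your diagnosis is accurate and tracks the paper's own machinery. The reduction of the conjecture to the non-singularity of $\bigl[\mathfrak{b}_{K,\ell_i,\chi}(2^{j-1})\bigr]$ via Proposition \ref{prop:linearindeperiodtomodularform2}, the Vandermonde lower bound of Lemma \ref{lem:lowerboundoflimitmatrix2}, and the observation that the error estimates of Lemmas \ref{lem:bdR}--\ref{lem:bdmathfrakRprime} behave like $\bigl(\pi e D\,4^{j}/(K-2)\bigr)^{(K-1)/2}$ and therefore blow up once $j$ is allowed to be of order $n-1\asymp K/12$, are exactly why the published argument cannot reach $n\le\dim S_K$. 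Neither of your two proposed remedies is carried out: the sparse-prime probe indices would still need new bounds on the convolution and level-lowering error terms that you do not supply, and the $L$-value route reduces the conjecture to a non-degeneracy statement for the matrix $\bigl[L(f_j,\chi,\ell_i+1)\bigr]$ that is at least as deep as the conjecture itself (and would require an unproved Maeda-type or non-vanishing input). So the concrete gap is simply that no step of the plan is executed; the honest takeaway---which you state---is that the elementary main-term-versus-error dichotomy of the paper is structurally too lossy for $n\asymp\dim S_K$, and a genuinely new idea is needed.
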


The above conjectures are analogues of \cite[Conjecture 1.5]{oddperiods} and \cite[Conjecture 4.5]{evenperiods}, respectively. 
We verified Conjectures \ref{conj:differentindice} and \ref{conj:differentindicebracket} for $K,D\leq 40$ \cite{codefortwisted} by  checking the non-singularity of the matrices
\begin{align*}
   \mathcal{C}_1:=& \begin{bmatrix}
        a_{K,\ell_1,\chi}(1) & a_{K,\ell_1,\chi}(2) & \cdots & a_{K,\ell_1,\chi}(n) \\ a_{K,\ell_2,\chi}(1) & a_{K,\ell_2,\chi}(2) & \cdots & a_{K,\ell_n,\chi}(n) \\ \vdots &\vdots &\ddots &\vdots\\a_{K,\ell_n,\chi}(1) & a_{K,\ell_n,\chi}(2) & \cdots & a_{K,\ell_n,\chi}(n) 
    \end{bmatrix}\end{align*}
and    
    \begin{align*}
    \mathcal{C}_2:=&\begin{bmatrix}
        b_{K,\ell_1,\chi}(1) & b_{K,\ell_1,\chi}(2) & \cdots & b_{K,\ell_1,\chi}(n) \\ b_{K,\ell_2,\chi}(1) & b_{K,\ell_2,\chi}(2) & \cdots & b_{K,\ell_n,\chi}(n) \\ \vdots &\vdots &\ddots &\vdots\\b_{K,\ell_n,\chi}(1) & b_{K,\ell_n,\chi}(2) & \cdots & b_{K,\ell_n,\chi}(n) 
    \end{bmatrix},
\end{align*}
where $a_{K,\ell,\chi}(n)$ and $b_{K,\ell,\chi}(n)$ denote the $q^n$-Fourier coefficient of $\mathcal{F}_{K,\ell,\chi}(z)$ and $\mathcal{G}_{K,\ell,\chi}(z)$, respectively.

Let $N(D)$ denote the number of primitive Dirichlet characters mod $D$. We also believe that twisted periods with fixed index but different twists mod $D$ span $S_K^{\ast}$ as long as $N(D)\geq \dim S_K$, which are corollaries of the following conjectures. 
 \begin{conjecture}\label{conj:differentchi}
       Let $D\geq1$ and $3\leq\ell\leq\frac{K-2}{2}$  be integers %\ques{Same question here on $K/2$}
       , and $\chi_1,...,\chi_n$ be distinct primitive Dirichlet characters mod $D$ such that $\chi_i(-1)=(-1)^{\ell}$. If $n\leq \dim S_K$, then the set of twisted periods $\{r_{\ell-1,\chi_i}\}_{i=1}^n$ on $S_K$ is linearly independent.
 \end{conjecture}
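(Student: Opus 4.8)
\textbf{A strategy for Conjecture \ref{conj:differentchi}.} The plan is to run the scheme behind Theorem \ref{thm:twistbydifferentD}, but to drop the artificial hypothesis that the $\chi_i(2)$ be pairwise distinct and to choose the Fourier indices according to the characters themselves. (For a general modulus $D$ one must first extend the constructions of Sections \ref{sect:rankin-selberg} and \ref{sect:Fourier} --- in particular Proposition \ref{prop:Fourco} and the asymptotic Proposition \ref{prop:asymptoticA} --- beyond the odd square-free case; I will take this for granted.) By the first of the two unlabelled propositions at the end of Section \ref{sect:rankin-selberg}, linear independence of $\{r_{\ell-1,\chi_i}\}_{i=1}^n$ is equivalent to linear independence of the kernel forms $\mathcal F_{K,\ell,\chi_1},\dots,\mathcal F_{K,\ell,\chi_n}\in S_K$, so it suffices to exhibit indices $1=m_1<m_2<\dots<m_n$ for which the matrix $\big[a_{K,\ell,\chi_i}(m_j)\big]_{1\le i,j\le n}$ of Fourier coefficients is non-singular; the hypothesis $n\le\dim S_K$ (equivalently, $n$ is at most the number of primitive characters mod $D$ of the given parity, so $K$ is only moderately large) is exactly what should leave room for this.

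The indices will be dictated by character theory. Since $\chi_1,\dots,\chi_n$ are distinct, they are linearly independent as functions on $(\mathbb Z/D)^\times$, hence the vectors $\big(\chi_i(a)\big)_{i=1}^n$ span $\mathbb C^n$ as $a$ runs over $(\mathbb Z/D)^\times$; in particular the vector $(1,\dots,1)=\big(\chi_i(1)\big)_i$ can be completed to a basis of $\mathbb C^n$ by such vectors, say using $a_1=1,a_2,\dots,a_n$, so that $\det\big[\chi_i(a_j)\big]_{1\le i,j\le n}\ne0$. By Dirichlet's theorem I would then pick distinct primes $p_j\nmid D$ with $p_j\equiv a_j\pmod D$ (and set $p_1=1$), take $m_j=p_j$, and use that $\sigma_{\ell-1,\chi_i}(m_j)=1+\chi_i(p_j)p_j^{\ell-1}$ for $j\ge2$. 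Subtracting the first column and pulling out the factors $p_j^{\ell-1}$ --- exactly the column reduction used in the proof of Lemma \ref{lem:lowerboundoflimitmatrix3} --- shows that $\det\big[\sigma_{\ell-1,\chi_i}(m_j)\big]=\big(\prod_{j=2}^n p_j^{\ell-1}\big)\det\big[\chi_i(a_j)\big]\ne0$. By the general-$n$ form of Proposition \ref{prop:asymptoticA} one has $\mathfrak a_{K,\ell,\chi_i}(m_j)=\sigma_{\ell-1,\chi_i}(m_j)(1+o(1))$ as $K\to\infty$, so the normalized $n\times n$ matrix is non-singular for all $K\gg_{D,n}1$; this already establishes the conjecture in that range, i.e.\ Theorem \ref{thm:twistbydifferentD} with the distinctness hypothesis deleted.

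The real difficulty is to replace ``$K\gg_{D,n}1$'' by the sharp bound $n\le\dim S_K$. The soft asymptotics above are far too lossy; what is needed is a fully explicit, non-asymptotic bound on the error terms $\eta_{K,i,j}$ attached to $\mathfrak a_{K,\ell,\chi_i}(p_j)$ --- in the spirit of Lemmas \ref{lem:bdsmalltermprod} and \ref{lem:bdDtermprod}, but now also tracking the Gauss-sum and $L$-value factors carrying the $\chi$-dependence --- combined with a lower bound for $\big|\det[\chi_i(a_j)]\big|$ strong enough to dominate the resulting perturbation of the determinant throughout, say, the range $K\ge 12\,N(D)$. Even the case $n=1$ is already non-trivial: it asserts that $r_{\ell-1,\chi}\not\equiv0$ on $S_K$ as soon as $\dim S_K\ge1$, i.e.\ that some normalized Hecke eigenform $f\in S_K$ satisfies $L(f,\chi,\ell)\ne0$ --- a non-vanishing statement for twisted $L$-values near the edge of the critical strip. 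Proposition \ref{prop:nonvanishingofF} disposes of a sliver of this (the case $\ell=K/2$, $K\ge10D+2$) by showing a single Fourier coefficient of the kernel is nonzero, and carrying out the analogous estimate uniformly in $\ell$ and for $n$ characters simultaneously, with the optimal weight bound, is the step I expect to be the genuine obstacle.
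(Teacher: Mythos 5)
You have been asked to prove a statement that the paper itself does not prove: Conjecture \ref{conj:differentchi} is stated as an open conjecture, supported in the paper only by numerical verification of the non-singularity of the matrices $\mathcal{C}_3$ for $K,D\le 40$. So there is no proof in the paper to compare against, and your proposal should be judged as an independent attempt. On that basis: the first two paragraphs of your strategy are sound and actually sharpen the paper's Theorem \ref{thm:twistbydifferentD} in the asymptotic regime. Replacing the Fourier indices $2^{j-1}$ by primes $p_j\equiv a_j\pmod D$ chosen (via linear independence of characters and Dirichlet's theorem) so that $\det[\chi_i(a_j)]\ne 0$ is a genuinely better idea than the paper's device of requiring the $\chi_i(2)$ to be pairwise distinct: it removes that hypothesis entirely while still reducing the determinant to a nonzero character matrix by the same column reduction as in Lemma \ref{lem:lowerboundoflimitmatrix3}. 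The supporting ingredients you invoke do need to be checked — Proposition \ref{prop:asymptoticA} is only proved for $n=2^j$, and for general $m_j=p_j$ you need a lower bound on $|\sigma_{\ell-1,\chi_i}(p_j)|=|1+\chi_i(p_j)p_j^{\ell-1}|\ge p_j^{\ell-1}-1$ in place of \eqref{eq:lowerbddivisor}, which is harmless; more seriously, the entire apparatus of Sections \ref{sect:rankin-selberg}--\ref{sect:Fourier} is developed only for $D$ odd and square-free, whereas the conjecture allows arbitrary $D\ge1$, and you correctly flag that this extension is assumed rather than carried out.

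The genuine gap — which you name yourself — is the passage from $K\gg_{D,n}1$ to the stated hypothesis $n\le\dim S_K$. This is not a technicality that sharper constants will fix by themselves: when $K$ is only of size comparable to $12n$, the error terms $\eta_{K,i,j}$ are not small, the lower bound $|\chi_j(2)-\chi_i(2)|\ge 4/D$ (or its analogue $|\det[\chi_i(a_j)]|$) degrades with $D$, and even the base case $n=1$ amounts to non-vanishing of $L(f,\chi,\ell)$ for some eigenform $f$, for every admissible $\ell$, which is known only in special ranges (the paper's Proposition \ref{prop:nonvanishingofF} handles $\ell=K/2$, $K\ge 10D+2$, conditionally on Maeda). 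One small caveat on your framing: the condition $n\le\dim S_K$ is not equivalent to $n$ being at most the number of primitive characters mod $D$ of the given parity — these are two independent constraints, and the conjecture imposes the former because $S_K^{\ast}$ has dimension $\dim S_K$. In summary: your proposal is an honest and partially successful strategy, stronger than the paper's proved Theorem \ref{thm:twistbydifferentD} in one respect, but it does not prove the conjecture, and neither does the paper.
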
   
 \begin{conjecture}\label{conj:differentchibracket}
       Let $D\geq1$ and $3\leq\ell\leq\frac{K-4}{2}$  be integers, and $\chi_1,...,\chi_n$ be distinct primitive Dirichlet characters mod $D$ such that $\chi_i(-1)=(-1)^{\ell}$. If $n\leq \dim S_K$, then the set of twisted periods $\{r_{\ell,\chi_i}\}_{i=1}^n$ on $S_K$ is linearly independent.
 \end{conjecture}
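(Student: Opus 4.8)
The plan is to run the same three-step argument that proves Theorem~\ref{thm:twistbydifferentD2}, but to push it down to the sharp range $n\le\dim S_K$ and to the weaker hypothesis that the $\chi_i$ are merely distinct. The first step is the Rankin--Selberg equivalence: by the $\mathcal{G}$-version of Proposition~\ref{prop:linearindeperiodtomodularform2} for a fixed index and varying twist (the unnumbered proposition closing Section~\ref{sect:rankin-selberg}, whose proof uses only that $B_{K,\ell,\chi_i}\neq0$ and $L(f,K-2)\neq0$ for a normalized Hecke eigenform $f$, and in particular nothing about the values $\chi_i(2)$), the set $\{r_{\ell,\chi_i}\}_{i=1}^n$ is linearly independent on $S_K$ if and only if $\{\mathcal{G}_{K,\ell,\chi_i}\}_{i=1}^n$ is a linearly independent subset of $S_K$. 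So it suffices to find indices $1\le m_1<\dots<m_n$ for which the $n\times n$ matrix $\bigl[\mathfrak{b}_{K,\ell,\chi_i}(m_j)\bigr]_{1\le i,j\le n}$, formed from the normalized Fourier coefficients of the $\mathfrak{G}_{K,\ell,\chi_i}$, is non-singular.

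For the choice of indices: since the characters need not have distinct values at $2$, the powers $1,2,\dots,2^{n-1}$ used in Theorem~\ref{thm:twistbydifferentD2} will not separate them in general. I would instead let the $m_j$ run over $1$ and squarefree products of a fixed finite set of small primes $p\nmid D$, chosen large enough that the tuples $\bigl(\chi_i(p)\bigr)_p$ are pairwise distinct (possible since distinct Dirichlet characters mod $D$ already differ on a generating set of $(\mathbb{Z}/D\mathbb{Z})^\times$) and that there are at least $n$ such products; the numerical checks recorded after Conjecture~\ref{conj:differentchibracket} use the still simpler choice $m_j=j$. Using Proposition~\ref{prop:FourcoRankinCohen} one writes $\mathfrak{b}_{K,\ell,\chi_i}(m)=m\,\sigma_{\ell-1,\chi_i}(m)(1+o(1))$ as in Proposition~\ref{prop:asymptoticB}, and the main-term matrix $\bigl[m_j\sigma_{\ell-1,\chi_i}(m_j)\bigr]$ factors through a generalized Vandermonde determinant in the multiplicative characters, whose non-vanishing is exactly linear independence of the distinct characters of $(\mathbb{Z}/D\mathbb{Z})^\times$, generalizing Lemma~\ref{lem:lowerboundoflimitmatrix4}. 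For $K\gg_{n,D}1$ the error terms are $o(1)$ and the determinant estimate of Section~\ref{sect:coefmatrix} closes the argument; one also has to re-run the Fourier-coefficient computations of Section~\ref{sect:Fourier} for arbitrary $D$, since those assume $D$ odd and squarefree.

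The genuine obstacle is replacing ``$K\gg_{n,D}1$'' by ``$n\le\dim S_K$''. As $n$ is allowed to grow with $K$, up to $\dim S_K\sim K/12$, the determinant expansion has $n!$ permutation terms, while the lower bound for the main-term determinant analogous to Lemma~\ref{lem:lowerboundoflimitmatrix4} is only $(2^{\ell+2}/D)^{n(n-1)/2}$, which is $<1$ when $\ell$ is small relative to $\log_2 D$; the decaying bounds of Section~\ref{sect:Fourier} on the secondary terms $\mathscr{R}_{K,\ell,n,\chi},\mathscr{R}^{\prime}_{K,\ell,n,\chi}$ are not sharp enough, uniformly in $n$, to beat the product of these. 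Closing the gap would seem to require either genuinely effective error bounds that stay summable against $n!$ throughout the range, or --- more likely --- a structural input replacing the coefficient estimates altogether, for instance exploiting the Hecke-module structure of $S_K$ and the Eichler--Shimura isomorphism to show directly that the kernel forms $\mathcal{G}_{K,\ell,\chi_i}$, equivalently the twisted period functionals, are in general linear position. I expect the uniform control of the determinant across $n\le\dim S_K$ to be the crux, which is why the statement is recorded as a conjecture; the verification for $K,D\le 40$ is consistent with there being no small-weight obstruction.
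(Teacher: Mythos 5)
The statement you were asked to prove is Conjecture~\ref{conj:differentchibracket}, which the paper itself does not prove: it is stated as an open conjecture, supported only by numerical verification of the non-singularity of the matrix $\mathcal{C}_4$ for $K,D\leq40$. There is therefore no proof in the paper to compare against. Your proposal correctly recognizes this. What you outline is essentially the paper's own partial progress: Theorem~\ref{thm:twistbydifferentD2} establishes exactly the special case where $D$ is odd and squarefree, $K\gg_D1$, and the $\chi_i(2)$ are pairwise distinct, via the equivalence with linear independence of $\{\mathcal{G}_{K,\ell,\chi_i}\}$ and the non-singularity of $Q_{K,\ell,\chi_1,\ldots,\chi_n}$. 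The obstacles you name --- replacing $K\gg_D1$ by $n\leq\dim S_K$, dropping the hypothesis that the $\chi_i(2)$ are distinct, and removing the restriction to odd squarefree $D$ --- are precisely the gaps between the proved theorem and the conjecture, and your diagnosis that the $n!$-term determinant expansion cannot be controlled uniformly for $n$ as large as $\dim S_K$ by the error bounds of Section~\ref{sect:Fourier} matches why the authors stop at a conjecture.

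One caution on the part of your sketch that goes beyond the paper: the claim that the main-term matrix $\bigl[m_j\sigma_{\ell-1,\chi_i}(m_j)\bigr]$, with $m_j$ ranging over squarefree products of several primes, ``factors through a generalized Vandermonde determinant \ldots whose non-vanishing is exactly linear independence of the distinct characters'' is not justified. For squarefree $m_j$ one has $\sigma_{\ell-1,\chi_i}(m_j)=\prod_{p\mid m_j}(1+\chi_i(p)p^{\ell-1})$, and the resulting matrix is not a Vandermonde matrix in any single variable; Artin's linear independence of characters does not directly give its non-singularity, and the paper's Lemma~\ref{lem:lowerboundoflimitmatrix4} relies essentially on the single-prime Vandermonde structure together with the hypothesis that the $\chi_i(2)$ are distinct. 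So even the ``main term'' of your proposed extension would require a new argument. Since you present the whole thing as an attack plan rather than a proof and explicitly flag that the statement remains conjectural, this is an honest and accurate assessment rather than a flawed proof.
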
   
We also verified Conjectures \ref{conj:differentchi} and \ref{conj:differentchibracket} for $K,D\leq40$ \cite{codefortwisted} by checking  matrices 
\begin{align*}
  \mathcal{C}_3:=\begin{bmatrix}
        a_{K,\ell,\chi_1}(1) & a_{K,\ell,\chi_1}(2) & \cdots & a_{K,\ell,\chi_1}(n) \\ a_{K,\ell,\chi_2}(1) & a_{K,\ell,\chi_2}(2) & \cdots & a_{K,\ell,\chi_2}(n) \\ \vdots &\vdots &\ddots &\vdots\\a_{K,\ell,\chi_n}(1) & a_{K,\ell,\chi_n}(2) & \cdots & a_{K,\ell,\chi_n}(n) 
    \end{bmatrix}\end{align*}
    and
    \begin{align*}
    \mathcal{C}_4:=\begin{bmatrix}
        b_{K,\ell,\chi_1}(1) & b_{K,\ell,\chi_1}(2) & \cdots & b_{K,\ell,\chi_1}(n) \\ b_{K,\ell,\chi_2}(1) & b_{K,\ell,\chi_2}(2) & \cdots & b_{K,\ell,\chi_2}(n) \\ \vdots &\vdots &\ddots &\vdots\\b_{K,\ell,\chi_n}(1) & b_{K,\ell,\chi_n}(2) & \cdots & b_{K,\ell,\chi_n}(n) 
    \end{bmatrix}.
\end{align*}
are non-singular.

\section*{Acknowledgments}
The authors thank the anonymous referee for the detailed comments and insightful advice that have improved the exposition of this article. Research of Hui Xue is supported by  Simons Foundation Grant MPS-TSM-00007911. We also thank Erick Ross for numerically verifying Conjectures \ref{conj:differentindice}, \ref{conj:differentindicebracket}, \ref{conj:differentchi} and \ref{conj:differentchibracket}. 
\bibliographystyle{plain}
  
  \vspace{.2in}

%\bibliographystyle{amsplain}
%\bibliography{article}

\providecommand{\bysame}{\leavevmode\hbox
to3em{\hrulefill}\thinspace}

\bibliography{ref}
\end{document}